\newfont{\cyr}{wncyr10 scaled 1100}
\newcommand*\ZZ{|[draw,circle]| \Z_2}
\numberwithin{equation}{section}
\theoremstyle{plain}
\newtheorem{theorem}{Theorem}[section]
\newtheorem*{theorem*}{Theorem}
\newtheorem{corollary}[theorem]{Corollary}
\newtheorem{lemma}[theorem]{Lemma}
\newtheorem{proposition}[theorem]{Proposition}
\numberwithin{equation}{section}
\newtheorem{thm}{Theorem}
\newtheorem{ass}[thm]{Assumption}
\theoremstyle{definition}
\newtheorem{definition}[theorem]{Definition}
\theoremstyle{remark}
\newtheorem{obswr}[theorem]{Observation}
\newtheorem{remarkwr}[theorem]{Remark}
\newtheorem{intro-definition}[theorem]{Definition}
\newenvironment{remark}{\begin{remarkwr}\begin{upshape}}{\end{upshape}\end{remarkwr}}
\newenvironment{myproof}[2] {\paragraph{\emph{Proof of {#1} {#2} }}}{\hfill$\square$}
\def\Gal{\mathrm{Gal}}
\def\GL{\mathrm{GL}}
\def\det{\mathrm{det}}
\def\ord{\mathrm{ord}}
\def\Spec{\mathrm{Spec}}
\def\coker{\mathrm{coker}}
\def\sing{\mathrm{sing}}
\def\Frob{\mathrm{Frob}}
\def\new{\mathrm{new}}
\def\ur{\mathrm{ur}}
\def\ac{\mathrm{ac}}
\def\CH{\mathrm{CH}}
\def\Nrd{\mathrm{Nrd}}
\DeclareMathOperator{\End}{End}
\def\calA{\mathcal{A}}
\def\calF{\mathcal{F}}
\def\calH{\mathcal{H}}
\def\calL{\mathcal{L}}
\def\calM{\mathcal{M}}
\def\calO{\mathcal{O}}
\def\calW{\mathcal{W}}
\def\gothN{\mathfrak{N}}
\def\frakL{\mathfrak{L}}
\def\frakl{\mathfrak{l}}
\def\Adel{\mathbf{A}}
\def\CC{\mathbf{C}}
\def\FF{\mathbf{F}}
\def\PP{\mathbf{P}}
\def\QQ{\mathbf{Q}}
\def\RR{\mathbb{R}}
\def\TT{\mathbb{T}}
\def\ZZ{\mathbf{Z}}
\def\rmA{\mathrm{A}}
\def\rmH{\mathrm{H}}
\def\rmE{\mathrm{E}}
\def\rmM{\mathrm{M}}
\def\rmV{\mathrm{V}}
\def\rmT{\mathrm{T}}
\def\ac{\mathrm{ac}}
\def\frakm{\mathfrak{m}}
\def\Qbar{\QQ^{\ac}}
\def\interX{\mathfrak{X}}
\def\ss{\mathrm{ss}}
\begin{document}

\title[Heegner cycles]
{Indivisibility of Heegner cycles over Shimura curves and Selmer groups}

\author{Haining Wang }
\address{\parbox{\linewidth} {Haining Wang,\\ Department of Mathematics,\\ McGill University,\\ 805 Sherbrooke St W,\\ Montreal, QC H3A 0B9, Canada.~ }}
\email{wanghaining1121@outlook.com}

\begin{abstract}
In this article, we show that the Abel-Jacobi images of  the Heegner cycles over the Shimura curves constructed by Nekovar, Besser and the theta elements contructed by Chida-Hsieh form a bipartite Euler system in the sense of Howard. As an application of this, we deduce a converse to Gross-Zagier-Kolyvagin type theorem for higher weight modular forms generalizing works of Wei Zhang and Skinner for modular forms of weight two.  That is, we show if the rank of certain residual Selmer group is one, then the Abel-Jacobi image of the Heegner cycle is non-zero in this residual Selmer group. 
\end{abstract}

 \subjclass[2000]{Primary 11G18, Secondary 20G25}
\date{\today}

\maketitle

\tableofcontents

\section{Introduction}

In a seminal work of Bertolini-Darmon \cite{BD-Main}, the authors constructed an Euler-Kolyvagin type system using Heegner points on various Shimura curves. The cohomology classes in this system satisfy beautiful reciprocity laws that resemble the so called Jochnowitz's congruences. More precisely, these reciprocity laws relate the theta elements of the Gross points on the Shimura set given by certain definite quaternion algebras to the reductions of the Heegner points on the Shimura curves given by certain indefinite quaternion algebras. These theta elements encode the algebraic part of the special values of the $L$-functions of elliptic curves over an imaginary quadratic field and the Heegner points provide natural classes in the (dual of the) Selmer groups for the elliptic curves over such imaginary quadratic field.   Therefore these reciprocity laws enabled the authors to construct annihilators for elements in the Selmer groups. As an application of these, the authors prove the one-sided divisibility of the anticyclotomic Iwasawa main conjecture for an elliptic curves. The method of Bertolini-Darmon is axiomatized in \cite{How} where it is shown that the theta elements and the Heegner points (almost) form a bipartite Euler system in his sense. See also the recent work \cite{BCK} for a refinement.

The present article addresses the question of constructing a bipartite Euler system for higher weight modular forms over an imaginary quadratic field. On the analytic side, the theta elements are constructed by Chida-Hsieh in \cite{CH-1}. On the geometric side, it is natural to consider the Heegner cycles constructed by Nekovar \cite{Nekovar-Heeg} over the classical modular curves and by Besser \cite{Bess}, Iovita-Speiss \cite{IS} over the Shimura curves given by indefinite quaternion algebras. In this article, we show that these Heegner cycles and the theta elements of Chida-Hsieh indeed form a bipartite Euler system. As an application of this, we prove a converse to Gross-Zaiger-Kolyvagin type  theorem which can be seen as the rank $1$ case of a generalization of the Kolyvagin's conjecture to higher weight modular forms. We follow the strategy of Wei Zhang in his proof of the original Kolyvagin's conjecture for modular forms of weight $2$. 

There are other attempts to generalize the work of Bertolini-Darmon \cite{BD-Main} to higher weight case. Notably in \cite{CH-2},  the authors indeed prove the one-sided divisibility for the anticyclotomic Iwasawa main conjecture for higher weight modular forms. Their construction relies on a clever trick using congruences between weight two modular forms and higher weight modular forms evaluated at Gross points. This strategy works well when the root number of the involved $L$-function is plus one but does not apply to questions when the root number is negative one. We also remark that in \cite{Chida}, the author works directly with the Heegner cycles but still in the case when the root is plus one and he is able to prove the first reciprocity law and apply it to prove a version of the Bloch-Kato conjecture in the rank zero case. In this article, we prove the remaining second reciprocity law which forms the main arithmetic input to our proof of the converse to the Gross-Zagier-Kolyvagin type theorem.

\subsection{Main results} To describe precisely our results, we first introduce some notations. Let $f\in S^{\new}_{k}(N)$ be a newform of level $\Gamma_{0}(N)$ with even weight $k$ and  $K$ be an imaginary quadratic field whose discriminant is given by $-D_{K}$ with $D_{K}>0$.  We assume that $N$ and $D_{K}$ are relatively prime to each other. We also assume that $N$ admits a factorization $N=N^{+}N^{-}$ with $N^{+}$ only divisible by primes that are split in $K$ and $N^{-}$ only divisible by primes that are inert in $K$. Throughout this article we assume that the following generalized Heegner hypothesis is satisfied:
\begin{equation*}\tag{Heeg}
\text{\emph{$N^{-}$ is square free and consists of even number of prime factors that are inert in $K$}}.
\end{equation*}
Let $l$ be a distinguished rational prime such that $l\nmid ND_{K}$ and $k<l-1$. Let $E=\QQ(f)$ be the Hecke field of $f$ and we fix an embedding $\iota_{l}: \QQ^{\ac}\hookrightarrow \CC_{l}$ such that it induces a place $\lambda$ of $E$. Let $E_{\lambda}$ be the completion of $E$ at $\lambda$ and $\calO=\calO_{E_{\lambda}}$ be the valuation ring of $E_{\lambda}$. We fix a uniformizer $\varpi\in\calO$ and let $\FF_{\lambda}$ be the residue field of $\calO$. If $n\geq 1$, then we will write $\calO_{n}=\calO/\varpi^{n}$. Let $\TT=\TT(N^{+}, N^{-})$ be the $l$-adic completion of the Hecke algebra acting faithfully on the subspace of $S_{k}(N)$ consisting of forms that are new at primes dividing $N^{-}$. Let $\phi_{f}: \TT\rightarrow \calO$ be the morphism corresponding to the Hecke eigensystem of $f$ and $\phi_{f, n}: \TT\rightarrow \calO_{n}$ be the reduction of $\phi_{f}$ modulo $\varpi^{n}$. Let $I_{f, n}$ be the kernel of $\phi_{f, n}$ and $\frakm_{f}$ be the unique maximal ideal containing $I_{f, n}$. We denote by
\begin{equation*}
\rho_{f, \lambda}: G_{\QQ}\rightarrow \GL_{2}(E_{\lambda})
\end{equation*}
the $\lambda$-adic Galois representation attached to $f$ whose residual Galois representation is denoted by $\bar{\rho}_{f, \lambda}$. In this article, we will consider the twist 
$\rho_{f, \lambda}(\frac{2-k}{2})$ which we will denote by $\rho^{*}_{f, \lambda}$ whose representation space is denoted by $V_{f, \lambda}$. We fix a $G_{\QQ}$-stable lattice $\rmT_{f, \lambda}$ in $V_{f, \lambda}$. The residual Galois representation of $\rho^{*}_{f, \lambda}$ will be denoted by $\overline{\rho}^{*}_{f, \lambda}$.  It is well-known that the representation $\rho^{*}_{f, \lambda}$ appears in the cohomology of certain Shimura curve with coefficient in some $l$-adic local system $\calL_{k-2}$ corresponding to the representation $\mathrm{Sym}^{k-2}\mathrm{st}\otimes \det^{\frac{k-2}{2}}$ of $\GL_{2}$ where $\mathrm{st}$ is the standard representation of $\GL_{2}$. To define these Shimura curves, we will introduce certain quaternion algebras.  Let $B^{\prime}$ be the indefinite quaternion algebra of discriminant $N^{-}$ and $\calO_{B^{\prime}, N^{+}}$ be an Eichler order of level $N^{+}$ contained in some maximal order $\calO_{B^{\prime}}$ of $B^{\prime}$. These data define a Shimura curve $X=X^{B^{\prime}}_{N^{+}, N^{-}}$ which is a coarse moduli space of abelian surfaces with quaternionic multiplication. We wish NOT to assume that $N^{-}>1$ in which case $X$ is a projective curve over $\QQ$.  In the case when $N^{-}=1$, $X$ will be the compactification of the classical modular curve over $\QQ$. However, we only give the constructions and proofs for the more complicated case of $N^{-}>1$. We will rigidify the moduli problem of $X$ by adding an auxiliary full $d$-level structure and denote the resulting fine moduli space by $X_{d}$. Let $ A_{d}\rightarrow X_{d}$ be universal abelian surface and $\pi_{k, d}: W_{k, d}\rightarrow X_{d}$ be the Kuga-Sato variety given by the $\frac{k-2}{2}$-fold fiber product of $A_{d}$ over $X_{d}$.  One can construct certain idempotent $\epsilon_{d}$ and $\epsilon_{k}$ that cuts out the motive of $f$ in the Kuga-Sato variety $W_{k,d}$. Then the representation $\rmT_{f, \lambda}$ occurs in $\epsilon_{d}\epsilon_{k}\rmH^{k-1}(W_{k,d, \QQ^{\ac}}, \calO(\frac{k}{2}))=\rmH^{1}(X_{\QQ^{\ac}}, \calL_{k-2}(\calO)(1))$.  We will put the following assumption on the residue Galois representation $\bar{\rho}_{f, \lambda}$.
\begin{ass}[$\mathrm{CR}^{\star}$]  The residual Galois representation $\bar{\rho}_{f, \lambda}$ satisfies the following assumptions.
\begin{enumerate}
\item $l>k+1$ and $|(\FF^{\times}_{l})^{k-1}|>5$;
\item $\bar{\rho}_{f,\lambda}$ is absolutely irreducible when restricted to $G_{\QQ(\sqrt{p^{*}})}$ where $p^{*}=(-1)^{\frac{p-1}{2}}p$;
\item If $q\mid N^{-}$ and $q\equiv \pm1\mod l$, then $\bar{\rho}_{f, \lambda}$ is ramified;
\item If $q\mid \mid N^{+}$ and $q\equiv 1\mod l$, then $\bar{\rho}_{f, \lambda}$ is ramified;
\item The Artin conductor $N_{\bar{\rho}}$ of $\bar{\rho}_{f, \lambda}$ is prime to $N/N_{\bar{\rho}}$;
\item There is a place $q\mid\mid N$ such that $\bar{\rho}_{f, \lambda}$ is ramified at $q$.
\end{enumerate}
\end{ass}
We remark that our assumption $(\mathrm{CR}^{\star})$ is essentially the assumption $(\mathrm{CR}^{+})$  in  \cite{CH-2}. It is used to invoke results in \cite{CH-1} and \cite{CH-2}. The assumption $(\mathrm{CR}^{\star}) (6)$ is needed to apply the main result of \cite{SU}.

Let $K_{m}$ be the ring class field over $K$ of level $m$ for some integer $m\geq 1$. As recalled in \S 3.1, we define certain Heegner cycle
$\epsilon_{d}Y_{m, k}\in  \epsilon_{d}\epsilon_{k}\CH^{\frac{k}{2}}(W_{k, d}\otimes K_{m})\otimes\ZZ_{l}$
and an Abel-Jacobi map for some $n\geq 1$
\begin{equation*}
\mathrm{AJ}_{k, n}: \epsilon_{d}\epsilon_{k}\CH^{\frac{k}{2}}(W_{k, d}\otimes K_{m})\otimes\ZZ_{l} \rightarrow  \rmH^{1}(K_{m}, \mathrm{T}_{f, n}).
\end{equation*}
The images of $\epsilon_{d}Y_{m, k}$ under the map $\mathrm{AJ}_{k, n}$ gives the cohomological class 
\begin{equation*}
\kappa_{n}(m):=\mathrm{AJ}_{k, n}(\epsilon_{d}Y_{m, k})\in  \rmH^{1}(K_{m}, \mathrm{T}_{f, n})
\end{equation*}
and we define 
\begin{equation*}
\kappa_{n}:=\mathrm{Cor}_{K_{1}/K}\kappa_{n}(1)\in  \rmH^{1}(K, \mathrm{T}_{f, n}).
\end{equation*}
Our main result concerns the element $\kappa_{1}$. The element $\kappa_{1}$ in fact lives in the residual Selmer group 
\begin{equation*}
\mathrm{Sel}_{\calF(N^{-})}(N^{+}, \rmT_{f, 1}) 
\end{equation*}
defined by some Selmer structures $\calF(N^{-})$ spelled out in \eqref{Selmer}. Our main result is the following.
\begin{thm}\label{main-theorem-intro}
Suppose $(f, K)$ is a pair that satisfies the generalized Heegner hypothesis $(\mathrm{Heeg})$ with $f$  ordinary at $l$ and that $\bar{\rho}_{f, \lambda}$ satisfies the hypothesis $(\mathrm{CR}^{\star})$. If $\dim_{\FF_{\lambda}}\mathrm{Sel}_{\calF(N^{-})}(K, \rmT_{f, 1})=1$,
then the class $\kappa_{1}$ is non-zero in  $\mathrm{Sel}_{\calF(N^{-})}(K, \rmT_{f, 1})$. 
\end{thm}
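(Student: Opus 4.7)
The plan is to deduce this as the rank-one consequence of a bipartite Euler system in Howard's sense, formed by the Heegner-cycle classes $\kappa_{n}(m)$ and the Chida--Hsieh theta elements $\theta_{n}$, and then to follow the blueprint of Wei Zhang's weight-two argument. Concretely the strategy breaks into three parts: (i) construct Kolyvagin-type derived classes and verify two reciprocity laws; (ii) input Chida--Hsieh's non-vanishing of the relevant $\theta$-element modulo $\varpi$; (iii) combine (i) and (ii) with the rank-one hypothesis and Poitou--Tate global duality to conclude $\kappa_{1}\neq 0$.

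For (i), I would begin by fixing the notion of an $n$-admissible prime $q$ --- one which is inert in $K$, coprime to $NlD_{K}$, and whose Frobenius $\bar\rho_{f,\lambda}(\Frob_{q})$ satisfies the usual Kolyvagin admissibility condition modulo $\varpi^{n}$, so that both Ribet-style level-raising at $q$ in the definite setting and the transverse local condition on $\rmH^{1}(K_{q},\rmT_{f,n})$ behave well. For a squarefree product $\nu$ of such primes one defines the Kolyvagin derivative $\kappa_{n}(m\nu)$ and shows it lies in the appropriate relaxed Selmer group. The first reciprocity law identifies the finite localization $\mathrm{loc}^{\mathrm{fin}}_{q}\kappa_{n}$ with $\theta_{n}$ modulo $\varpi^{n}$ up to a unit (a Jochnowitz-style congruence between Heegner cycles and Gross points on the Shimura set of discriminant $N^{-}$), while the second reciprocity law identifies the singular localization $\mathrm{loc}^{\sing}_{q}\kappa_{n}(mq)$ with the level-raised theta element $\theta_{nq}$ on the Shimura set of the quaternion algebra of discriminant $N^{-}q$. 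Verifying the second reciprocity law is the main arithmetic input of the paper and the hardest step: it requires studying the supersingular reduction of $X^{B'}_{N^{+},N^{-}}$ at $q$, using a \v{C}erednik--Drinfeld-style $p$-adic uniformization to identify its special fiber with a definite Shimura set attached to the quaternion algebra of discriminant $N^{-}q$, and then tracking the Besser--Iovita--Spiess Abel--Jacobi image of the Heegner cycle with coefficients in $\calL_{k-2}(\calO)$ through this identification.

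With the bipartite Euler system in place, the rank-one deduction proceeds by contradiction. Suppose $\kappa_{1}=0$ in $\mathrm{Sel}_{\calF(N^{-})}(K,\rmT_{f,1})$, and let $s$ generate the one-dimensional Selmer group. Using the big-image assumption $(\mathrm{CR}^{\star})(2)$, Chebotarev density produces an admissible prime $q$ such that $\mathrm{loc}_{q}(s)\neq 0$ and simultaneously the level-raised theta element $\theta_{q}$ is non-zero modulo $\varpi$ (non-vanishing of $\theta_{q}$ comes from Chida--Hsieh under $(\mathrm{CR}^{\star})$). The first reciprocity law translates $\theta_{q}\not\equiv 0$ into non-vanishing of the finite part of $\mathrm{loc}_{q}\kappa_{q}$, and combined with the rank-one hypothesis together with Poitou--Tate global duality (using the vanishing $\kappa_{1}=0$), this forces the existence of a Selmer class independent of $s$, contradicting $\dim_{\FF_{\lambda}}\mathrm{Sel}_{\calF(N^{-})}(K,\rmT_{f,1})=1$.

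The main obstacle I foresee is, as noted, the geometric content of the second reciprocity law --- making the \v{C}erednik--Drinfeld identification precise enough on the level of the Kuga-Sato varieties $W_{k,d}$ (not just the Shimura curves), carrying the local system $\calL_{k-2}(\calO)$ through, and matching the output to the Chida--Hsieh theta element on the level-raised quaternion algebra. A secondary technical difficulty is Ihara's lemma and freeness of the Hecke module at $\frakm_{f}$ in the presence of the local system $\calL_{k-2}$, for which the joint hypotheses $l>k+1$, $|(\FF_{l}^{\times})^{k-1}|>5$ and the residual irreducibility in $(\mathrm{CR}^{\star})(2)$ are essential.
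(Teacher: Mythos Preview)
Your proposal has the right architecture---admissible primes, level raising, reciprocity laws---but there is a genuine gap in step (ii), and it propagates into your endgame.

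You assert that the non-vanishing of the level-raised theta element $\theta_{q}\not\equiv 0\pmod{\varpi}$ ``comes from Chida--Hsieh under $(\mathrm{CR}^{\star})$'', and you even suggest it can be arranged \emph{simultaneously} with $\mathrm{loc}_{q}(s)\neq 0$ by a Chebotarev argument. Neither is correct. Non-vanishing of $\theta_{q}$ modulo $\varpi$ is \emph{not} a Chebotarev condition on $q$, and Chida--Hsieh alone only give one divisibility in the anticyclotomic main conjecture (namely $L$-value nonzero $\Rightarrow$ Selmer bounded). What you need here is the \emph{converse} implication: the choice $\mathrm{loc}_{q}(s)\neq 0$ forces $\dim\mathrm{Sel}_{\calF(qN^{-})}(K,\rmT_{f,1})=0$ (Howard's rank-lowering), and then one must convert ``Selmer trivial'' into ``$L(f^{[q]}/K,k/2)$ is a $\varpi$-unit''. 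That step requires the \emph{other} divisibility of the main conjecture, supplied by Skinner--Urban, together with the Kim--Ota period comparison $\Omega^{\mathrm{can}}_{f^{[q]}}$ versus $\Omega_{f^{[q]},qN^{-}}$ to match the period in the special-value formula. Without this input your argument cannot close, and this is precisely where hypothesis $(\mathrm{CR}^{\star})(6)$ is used.

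Once the theta non-vanishing is in hand, the conclusion is immediate and direct---no contradiction argument, no Kolyvagin derivatives, no Poitou--Tate step beyond the rank-lowering: the reciprocity law identifying $\mathrm{loc}_{q}(\kappa_{1})$ with $\Theta(f^{[q]}_{\pi'})$ gives $\mathrm{loc}_{q}(\kappa_{1})\neq 0$, hence $\kappa_{1}\neq 0$. Note also that this uses the \emph{unramified} level raising (finite part at a prime of good reduction, computed via the supersingular locus of $\overline{X}$), which the paper calls the second reciprocity law; the \v{C}erednik--Drinfeld uniformization you describe enters the \emph{ramified} level raising on the curve $X''$ of discriminant $pp'N^{-}$, which is the other reciprocity law and is not needed for the rank-one theorem. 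Your derived classes $\kappa_{n}(m\nu)$ are likewise unnecessary here.
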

This theorem can be viewed as a converse to Gross-Zagier-Kolyvagin theorem for Heegner cycles. For the other direction, one can show $\kappa_{n}$ is non-zero in $\mathrm{Sel}_{\calF(N^{-})}(K, \rmT_{f, n})$, then the Selmer group $\mathrm{Sel}_{\calF(N^{-})}(K, \rmT_{f, n})$ is of rank $1$. This follows from the result of Nekovar \cite{Nekovar-Heeg} in the case when $N^{-}=1$ and its extension to the case when $N^{-}>1$ in \cite{EdVP}. In these works, they follow the method of Kolyvagin and uses the derivative classes of the Heegner cycles to construct annihilators 
for the Selmer groups. We can recover their theorems by combining the first and second reciprocity law proved in this article. See \cite{Wang2} for an example of how to carry this out. We also have the Gross-Zagier formula  \cite{zhang-Heegner} for the Heegner cycles over the classical modular curves by Shou-Wu Zhang.  Suppose that height pairing is non-degenerate, then the Gross-Zagier formula and Theorem \ref{main-theorem-intro} would allow us to conclude that if the rank of Selmer group is one, then the analytic rank of the $L$-function $L(f/K, \frac{k}{2})$ is one.  Next we sketch the proof of Theorem \ref{main-theorem-intro}. First we recall the notion of an $n$-admissible prime for $f$. 
\begin{definition}\label{adm}
We say a prime $p$  is $n$-admissible for $f$ if
\begin{enumerate}
\item $p\nmid Nl$;
\item $p$ is an inert in $K$;
\item $l$ does not divide $p^{2}-1$;
\item  $\varpi^{n}$ divides $p^{\frac{k}{2}}+p^{\frac{k-2}{2}}-\epsilon_{p}a_{p}(f)$ with $\epsilon_{p}\in\{\pm1\}$.
\end{enumerate}
\end{definition}
These primes are level raising primes for $f$. This is justified in the following theorem which we call the (unramified) arithmetic level raising theorem for the Kuga-Sato varieties. We consider the ordinary-supersingular excision exact sequence on $X$ with coefficient in $\calL_{k-2}$
\begin{equation*}
0\rightarrow\rmH^{1}(\overline{X}_{\FF^{\ac}_{p}}, \calL_{k-2}(\calO)(1))_{\frakm_{f}}\rightarrow \rmH^{1}(X^{\ord}_{\FF^{\ac}_{p}}, \calL_{k-2}(\calO)(1))_{\frakm_{f}}\rightarrow 
\rmH^{0}(X^{ss}_{\FF^{\ac}_{p}}, \calL_{k-2}(\calO))_{\frakm_{f}}\rightarrow0
\end{equation*}
whose coboundary map induces the following map
\begin{equation*}
\Phi_{n}: \rmH^{0}(X^{\ss}_{\FF^{\ac}_{p}}, \calL_{k-2}(\calO))^{G_{\FF_{p^{2}}}}_{/I_{f, n}}\rightarrow \rmH^{1}(\FF_{p^{2}},  \rmH^{1}(\overline{X}_{\FF^{\ac}_{p}}, \calL_{k-2}(\calO)(1))_{/I_{f, n}}).
\end{equation*}
\begin{thm}[Unramified level raising]\label{level-raise-curve}
Let $p$ be an $n$-admissible prime for $f$. We assume that the residual Galois representation $\bar{\rho}_{f,\lambda}$ satisfies $(\mathrm{CR}^{\star})$. Then the following holds true.
\begin{enumerate}
\item There exists a morphism $\phi^{[p]}_{f, n}: \TT^{[p]}\rightarrow \calO_{n}$ that agree with $\phi_{f, n}:\TT\rightarrow\calO_{n}$ on all the Hecke operators away from $p$ and sending $U_{p}$ to $\epsilon_{p}p^{\frac{k-2}{2}}$.

\item Let $I^{[p]}_{f, n}$ be the kernel of the morphism $\phi^{[p]}_{f, n}$. We have a canonical isomorphism 
\begin{equation*}
\Phi_{n}: \rmH^{0}(X^{\ss}_{\FF^{\ac}_{p}}, \calL_{k-2}(\calO))^{G_{\FF_{p^{2}}}}_{/I_{f, n}}\xrightarrow{\cong}  \rmH^{1}(\FF_{p^{2}}, \rmH^{1}(\overline{X}_{\FF^{\ac}_{p}}, \calL_{k-2}(\calO)(1))_{/I_{f, n}})
\end{equation*}
which can be identified with an isomorphism
\begin{equation*}
\Phi_{n}: S^{B}_{k}(N^{+}, \calO)_{/I^{[p]}_{f, n}}\xrightarrow{\cong}  \rmH^{1}(\FF_{p^{2}}, \rmH^{1}(\overline{X}_{\FF^{\ac}_{p}}, \calL_{k-2}(\calO)(1))_{/I_{f, n}}).
\end{equation*}
\end{enumerate}
\end{thm}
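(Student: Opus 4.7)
The plan is to establish both parts simultaneously by identifying the source and target of the coboundary map $\Phi_n$ as free $\calO_n$-modules of rank one, and then showing that the coboundary is nonzero.

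For the source, I would apply Cherednik-Drinfeld $p$-adic uniformization to the supersingular locus: $X^{\ss}(\FF^{\ac}_p)$ is in $\Frob_p$-equivariant bijection with the double coset space $B^\times \backslash \hat{B}^\times / \hat{R}^\times$, where $B$ is the definite quaternion algebra of discriminant $N^- p$ obtained from $B^\prime$ by interchanging the ramification at $\infty$ and $p$, and $R$ is an Eichler order of level $N^+$ in $B$. Under this identification the local system $\calL_{k-2}(\calO)$ corresponds to the coefficient system attached to $\Sym^{k-2}\mathrm{st}\otimes\det^{(k-2)/2}$, giving
\[
\rmH^0(X^{\ss}_{\FF^{\ac}_p}, \calL_{k-2}(\calO)) \cong S^B_k(N^+, \calO).
\]
The closed points of $X^{\ss}$ are defined over $\FF_{p^2}$, and a direct computation of the Frobenius action on the stalks via the crystalline description gives that $\Frob_{p^2}$ acts trivially after the twist implicit in $\calL_{k-2}$. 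Taking $G_{\FF_{p^2}}$-invariants and passing to the $I_{f,n}$-quotient therefore produces $S^B_k(N^+, \calO)_{/I^{[p]}_{f,n}}$, which is free of rank one over $\calO_n$ by the mod $\varpi^n$ multiplicity-one result of Chida-Hsieh under $(\mathrm{CR}^{\star})$.

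For the target, I would invoke the freeness of $\rmH^1(\overline{X}_{\FF^{\ac}_p}, \calL_{k-2}(\calO)(1))_{\frakm_f}$ as a $\TT_{\frakm_f}$-module, proved in \cite{CH-2} under $(\mathrm{CR}^{\star})$. This identifies its $I_{f,n}$-quotient with $\rmT_{f,n}$ as a $G_{\QQ_p}$-module. The characteristic polynomial of $\Frob_p$ on $\rmT_{f,n}$ factors as $(X-\epsilon_p p)(X-\epsilon_p)$ modulo $\varpi^n$ by the $n$-admissibility of $p$, so $\Frob_{p^2}$ is semisimple with eigenvalues $1$ and $p^2$; since $l \nmid p^2 - 1$, a direct computation then yields $\rmH^1(\FF_{p^2}, \rmT_{f,n}) \cong \calO_n$.

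To match the two sides via $\Phi_n$, I would take the long exact sequence of $G_{\FF_{p^2}}$-cohomology from the displayed excision sequence, localized at $\frakm_f$ and reduced modulo $I_{f,n}$. Injectivity of $\Phi_n$ reduces to showing that the $\Frob_{p^2}$-invariants of $\rmH^1(X^{\ord})_{/I_{f,n}}$ surject onto those of $\rmH^0(X^{\ss})_{/I_{f,n}}$, which I would derive by analyzing the unit-root filtration on the ordinary cohomology coming from the Newton stratification and showing that the contribution obstructing surjectivity vanishes on the $I_{f,n}$-quotient for weight reasons. Since both sides then have the same $\calO_n$-length, this forces $\Phi_n$ to be an isomorphism, and the non-vanishing of the source produces the desired Hecke eigensystem $\phi^{[p]}_{f,n}\colon \TT^{[p]} \to \calO_n$ with $U_p$-eigenvalue $\epsilon_p p^{(k-2)/2}$, proving part (1). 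The principal obstacle is the freeness of the middle cohomology over $\TT_{\frakm_f}$ and the companion multiplicity-one statement on the definite quaternionic side, both of which rely on Diamond-Taylor-Wiles patching for quaternionic modular forms and constitute the place where hypothesis $(\mathrm{CR}^{\star})$ genuinely enters.
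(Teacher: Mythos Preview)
Your identification of the source with $S^{B}_{k}(N^{+},\calO)$ and the computation of the target as $\calO_{n}$ via the eigenvalues of $\Frob_{p}$ on $\rmT_{f,n}$ are broadly correct and also appear in the paper's argument. However, there are two genuine gaps in your proposal, and they concern precisely the step where the substance of the proof lies.

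First, your argument is circular at the point where you claim $S^{B}_{k}(N^{+},\calO)_{/I^{[p]}_{f,n}}$ is free of rank one. The Chida--Hsieh multiplicity-one result says that \emph{if} the localization at $\frakm^{[p]}_{f}$ is nonzero, \emph{then} it is free of rank one over $\TT^{[p]}_{\frakm^{[p]}_{f}}$; it does not by itself guarantee the quotient is nonzero. Knowing the quotient is nonzero is exactly the level-raising statement of part~(1), so you cannot assume it while proving (1). In the paper this is avoided by first establishing that $\Phi_{n}$ is \emph{surjective} onto a visibly nonzero target, which then forces the source to be nonzero and produces $\phi^{[p]}_{f,n}$.

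Second, your injectivity argument is not a proof. You propose to show surjectivity of $\rmH^{1}(X^{\ord})^{G_{\FF_{p^{2}}}}\to\rmH^{0}(X^{\ss})^{G_{\FF_{p^{2}}}}$ by ``analyzing the unit-root filtration on the ordinary cohomology'' and invoking unspecified weight considerations. The unit-root filtration lives on the crystalline side and does not control the map you need in $\ell$-adic cohomology; there is no standard mechanism here, and $\rmH^{1}$ of the open ordinary locus is genuinely hard to get one's hands on directly. The paper takes a completely different route: it passes to the $\Gamma_{0}(p)$-level curve $\interX_{0}(p)$, uses the monodromy (weight) filtration on $\rmH^{1}(\interX_{0}(p)\otimes\QQ^{\ac}_{p},\calL_{k-2}(\calO)(1))_{\frakm_{f}}$, and invokes Ihara's lemma (Diamond--Taylor) to show that $(\pi_{1*},\pi_{2*})$ is surjective. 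This yields a surjection $\Phi'$ from $\rmH^{0}(X^{\ss})$ onto $\coker(\nabla)$, where $\nabla$ is the intersection matrix $\begin{pmatrix}\mathrm{id}&\Frob_{p}\\ \Frob_{p}&\mathrm{id}\end{pmatrix}$, and one identifies $\coker(\nabla)$ with $\rmH^{1}(\FF_{p^{2}},\rmH^{1}(\overline{X}))$. A separate argument, using Illusie's description of the Picard--Lefschetz map, is then required to check that $\Phi'$ agrees with the excision coboundary $\Phi$. Ihara's lemma is the essential arithmetic input you are missing; it is the mechanism that transports information from the spherical level to the Iwahori level and ultimately to the supersingular locus, and your outline does not invoke it anywhere.
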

One can define a theta element $\Theta(f^{[p]}_{\pi^{\prime}})$ associated to the Jacquet-Langlands transfer $f^{[p]}_{\pi^{\prime}}$ of $f^{[p]}$ following Chida-Hsieh \cite{CH-1} that encodes the square root of the algebraic part of the $L$-function $L(f^{[p]}/K, \frac{k}{2})$. Note that the global root number of the $L$-function $L(f^{[p]}/K, s)$ at $\frac{k}{2}$ is $+1$. We have the following reciprocity formula relating the Heegner cycle class $\kappa_{n}$ to the theta element $\Theta(f^{[p]}_{\pi^{\prime}})$.

\begin{thm}[Second reciprocity law] 
Let $p$ be an $n$-admissible prime for $f$ and assume that $\bar{\rho}_{f, \lambda}$ satisfies assumption $(\mathrm{CR}^{\star})$. Let ${f}^{[p]}_{n}$ be a generator of $S^{B}_{k}(N^{+}, \calO)[I^{[p]}_{f, n}]$, then we have the following relation between the class $\kappa_{n}$ and the theta element $\Theta(f^{[p]}_{\pi^{\prime}})$
\begin{equation*}
\langle \mathrm{loc}_{p} (\kappa_{n}), f^{[p]}_{n}\rangle_{B}=u\cdot\Theta(f^{[p]}_{\pi^{\prime}}) \mod \varpi^{n}
\end{equation*}
for some unit $u\in \calO_{n}$. 
\end{thm}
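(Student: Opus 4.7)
The plan is to execute the Bertolini--Darmon reciprocity strategy in the higher-weight setting. Concretely, I would compute $\loc_p \kappa_n$ through the supersingular reduction of the Heegner cycle at $p$, transport the resulting class via the unramified arithmetic level raising isomorphism $\Phi_n$ of Theorem \ref{level-raise-curve} into $S^B_k(N^+, \calO)_{/I^{[p]}_{f,n}}$ where $B$ is the definite quaternion algebra of discriminant $N^-p$, and then identify the pairing against $f^{[p]}_n$ with the Chida--Hsieh theta element by direct unwinding at Gross points. This is the natural weight-$k$ analogue of the second reciprocity law of Bertolini--Darmon in \cite{BD-Main}.

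In detail, I would first work at level $K_1$ with $\kappa_n(1) = \AJ_{k,n}(\epsilon_d Y_{1,k})$ at a prime of $K_1$ above $p$. Since $p\nmid Ndl$, the Shimura curve $X_d$ and the Kuga--Sato variety $W_{k,d}$ admit smooth proper models over $\ZZ_{(p)}$, so $\rmT_{f,n}$ is unramified at $p$ and $\rmH^1(K_{1,p}, \rmT_{f,n})$ decomposes into an unramified part and a singular quotient. Unwinding the construction of $\AJ_{k,n}$ through the ordinary--supersingular excision sequence displayed just before Theorem \ref{level-raise-curve}, I would show that the image of $\loc_p\kappa_n(1)$ in the singular quotient is represented by the supersingular specialization of $\epsilon_d Y_{1,k}$ viewed in $\rmH^0(X^{\ss}_{\FF^{\ac}_p}, \calL_{k-2}(\calO))^{G_{\FF_{p^2}}}_{/I_{f,n}}$. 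Corestricting from $K_1$ down to $K$, applying $\Phi_n$ to reach $S^B_k(N^+, \calO)_{/I^{[p]}_{f,n}}$ via the Deuring--Serre--Tate identification of the supersingular locus with the Shimura set of Gross points, and pairing with $f^{[p]}_n$ produces a sum indexed by optimal embeddings of $\calO_K$ into the Eichler order of $B$, of values of $f^{[p]}_{\pi^{\prime}}$ at the corresponding Gross points weighted by the $\calL_{k-2}$-coefficient coming from the CM data of $\epsilon_d Y_{1,k}$; this sum matches, by construction of $\Theta$ in \cite{CH-1}, exactly $u\cdot \Theta(f^{[p]}_{\pi^{\prime}})$ modulo $\varpi^n$ for some unit $u\in \calO_n$.

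The main obstacle is the explicit identification of the supersingular reduction of the higher-weight Kuga--Sato cycle $\epsilon_d\epsilon_k Y_{1,k}$ with the polynomial weight appearing in $\Theta$. In the weight-two case treated in \cite{BD-Main} the Heegner point reduces to a single Gross point and the matching is essentially tautological, whereas in weight $k$ the cycle additionally carries data from the $\frac{k-2}{2}$-fold fibre product of the universal abelian surface over the supersingular CM point. One must verify, after projecting by $\epsilon_k$ and separating out the Galois-equivariant piece, that the resulting class recovers precisely the CM vector in $\calL_{k-2}$ used in the Chida--Hsieh construction. This amounts to a Dieudonn\'e/formal-group analysis at the supersingular CM point together with careful tracking of the Galois action, and is the genuinely new higher-weight arithmetic input beyond the first reciprocity law of \cite{Chida}.
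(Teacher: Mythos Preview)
Your overall strategy is the one the paper uses: push $\loc_p\kappa_n$ through the excision sequence, invert the level-raising isomorphism $\Phi_n$ of Theorem \ref{level-raise-curve} to land in $S^B_k(N^+,\calO)_{/I^{[p]}_{f,n}}$, and then pair against $f^{[p]}_n$ to recover a sum over Gross points matching $\Theta(f^{[p]}_{\pi'})$. Two points deserve correction or comment.

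First, a slip: $\loc_p\kappa_n$ lands in the \emph{finite} part $\rmH^1_{\mathrm{fin}}(K_p,\rmT_{f,n})$, not in the singular quotient. The prime $p$ is away from $N$, so $\calW_{k,d}$ has good reduction there and the Abel--Jacobi image is crystalline, hence unramified. The target of $\Phi_n$ is $\rmH^1(\FF_{p^2},\,\cdot\,)$, which is exactly this unramified cohomology. The singular quotient plays a role in the \emph{first} reciprocity law at the prime of Cerednik--Drinfeld bad reduction, not here.

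Second, the ``main obstacle'' you flag --- identifying the $\calL_{k-2}$-component of the reduced Heegner cycle with the CM vector in the theta element --- is in fact much lighter than the Dieudonn\'e/formal-group analysis you propose. The paper handles it by a purely representation-theoretic characterization (Lemma \ref{v0}): the cycle class of $\epsilon_d\epsilon_k y_m^{(k-2)/2}$ in $L_{k-2}(\ZZ_l)$ is fixed by the $K^\times$-action (since $\iota_m(b)^*y_m = \Nrd(b)\,y_m$) and has self-pairing $D_K^{k-2}$ (from the self-intersection $(y_m,y_m)=2D_K$). These two conditions pin down an element of $L_{k-2}$ up to sign, and $\mathbf{v}^*_0 = D_K^{(k-2)/2}\mathbf{v}_0$ visibly satisfies both. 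Once this identification is made, $\loc_p(\sigma\cdot\kappa_n(1))$ corresponds under $\Phi_n$ to the characteristic function $\mathbf{1}^{[\mathbf{v}^*_0]}_{\sigma(x_1(1))\cdot\tau^{N^+}}$, and the pairing computation is a two-line unwinding of the definitions of $\langle\cdot,\cdot\rangle_B$ and $\Theta$. So the genuinely new arithmetic input is already encapsulated in Theorem \ref{level-raise-curve}; the remaining step is linear algebra in $L_{k-2}$, not $p$-adic Hodge theory at the supersingular point.
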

Returning to the sketch of the proof of Theorem \ref{main-theorem-intro}, we choose an $1$-admissible prime $p$ for $f$ and consider the residual Selmer group $\mathrm{Sel}_{\calF(pN)}(K, \rmT_{f,1})$ associated to $f^{[p]}$. The assumption that the residual Selmer group $\mathrm{Sel}_{\calF(N^{-})}(K, \rmT_{f, 1})$ is of dimension $1$ ensures that the dimension of $\mathrm{Sel}_{\calF(pN)}(K, \rmT_{f,1})$ drops to $0$. As a consequence of the Iwasawa main conjectures for $f^{[p]}$ proved in \cite{SU} and \cite{CH-2}, we show that the algebraic part of the special value $L(f^{[p]}/K, \frac{k}{2})$ is indivisible by $\varpi$ and thus $\Theta(f^{[p]}_{\pi^{\prime}})$ is indivisible by $\varpi$. Here we will rely on the recent work of Kim-Ota \cite{OK} to compare the canonical period $\Omega^{\mathrm{can}}_{f^{[p]}}$ and another period $\Omega_{f^{[p]}, pN^{-}}$ that show up in the specialization formula relating $\Theta(f^{[p]}_{\pi^{\prime}})$ to $L(f^{[p]}/K, \frac{k}{2})$. Finally, the second reciprocity law implies that $\mathrm{loc}_{p} (\kappa_{1})$ is indivisible and therefore $\kappa_{1}$ is non-zero.

We finish this introduction with a few remarks on the related works. First of all, in \cite{wei-zhang}, the author proves the Kolyvagin's conjecture without assuming the rank of the Selmer group is $1$. It is reasonable to expect that one can formulate and prove an analogue of the Kolyvagin's conjecture for Heegner cycles using the result of the present article as the first step of an induction process. In this article, the derived classes of the Heegner cycles are completely untouched.  As pointed out by Francesc Castella, our results should also shed light to the Perrin-Riou's main conjecture for generalized Heegner cycles  given by \cite[Conjecture 5.1]{LV1}. Compare the proof of \cite[Proposition 3.7]{BCK} towards the original Perrin-Riou's main conjecture for Heegner points. In an unpublished work of Castella and Skinner, the authors carry out a similar program for the big Heegner point in the sense of Howard and it should be interesting to compare their results with the results in this article.

\subsection{Notations and conventions} We will use common notations and conventions in algebraic number theory and algebraic geometry. The cohomologies appeared in this article will be understood as the \'{e}tale cohomologies. For a field $K$, we denote by $K^{\ac}$ the separable closure of $K$ and let $G_{K}:=\Gal(K^{\ac}/K)$ to be the Galois group of $K$. We let $\Adel$ be the ring of ad\`{e}les over $\QQ$ and $\Adel^{(\infty)}$ be the subring of finite ad\`{e}les.  For a prime $p$, $\Adel^{(\infty, p)}$ denotes the prime-to-$p$ part of  $\Adel^{(\infty)}$. 

Let $F$ be a local field with ring of integers $\calO_{F}$ and residue field $k$. We let $I_{F}$ be the inertia subgroup of $G_{F}$. Suppose $\rmM$ is a $G_{F}$-module. Then the finite part $\rmH^{1}_{\mathrm{fin}}(F, \rmM)$ of $\rmH^{1}(F, \rmM)$ is defined to be $\rmH^{1}(k, \rmM^{I_{F}})$ and the singular part $\rmH^{1}_{\mathrm{sing}}(F, \rmM)$ of $\rmH^{1}(F, \rmM)$ is defined to be the quotient of $\rmH^{1}(F, \rmM)$ by the image of $\rmH^{1}_{\mathrm{fin}}(F, \rmM)$. 

We provide a list of quaternion algebras appearing in this article. Recall that $N^{-}$ is square free with even number of prime divisors and $p, p^{\prime}$  are $n$-admissible primes.

\begin{itemize}
\item $B^{\prime}$ is the indefinite quaternion algebra of discriminant $N^{-}$. 
\item $B$ is the definite quaternion algebra of discriminant $pN^{-}$. 
\item $B^{\prime\prime}$ is the indefinite quaternion algebra of discriminant $pp^{\prime}N^{-}$ 
\end{itemize}

\subsection*{Acknowledgements} We would like to thank Henri Darmon and Pengfei Guan for their generous support during this difficult time. We would like to thank Francesc Castella for useful communications. 

\section{Arithmetic level raising on Kuga-Sato varieties}

\subsection{Shimura curves and local system}
Let $N$ be a positive integer with a factorization $N=N^{+}N^{-}$ with $N^{+}$ and $N^{-}$ coprime to each other. We assume that $N^{-}$ is square-free and is a product of \emph{even} number of primes. Let $B^{\prime}$ be the indefinite quaternion algebra over $\QQ$ with discriminant $N^{-}$. Let $\calO_{B^{\prime}}$ be a maximal order of $B^{\prime}$ and  let $\calO_{B^{\prime}, N^{+}}$ be the Eichler order of level $N^{+}$ in $\calO_{B^{\prime}}$. We define $G^{\prime}$ to be the algebraic group over $\QQ$ given by $B^{\prime \times}$ and let $K^{\prime}_{N^{+}}$ be the open compact subgroup of $G^{\prime}(\mathbf{A}^{(\infty)})$ defined by $\widehat{\calO}^{\times}_{B^{\prime}, N^{+}}$. 
Let $X=X^{B^{\prime}}_{N^{+}, N^{-}}$ be the Shimura curve over $\QQ$ with level $K^{\prime}=K^{\prime}_{N^{+}}$. The complex points of this curve is given by the following double coset

\begin{equation*}
X(\CC)=G^{\prime}(\QQ)\backslash \calH^{\pm} \times G^{\prime}(\mathbf{A}^{(\infty)})/K^{\prime}.
\end{equation*}
We consider the functor $\mathfrak{X}$ on schemes over $\ZZ[1/N]$ which gives the following moduli problem. Let $S$ be a test scheme over $\ZZ[1/N]$, then $\mathfrak{X}(S)$ classifies the triples $(A, \iota, C)$ up to isomorphism where
\begin{enumerate}
\item $A$ is an $S$-abelian scheme of relative dimension $2$;
\item $\iota: \calO_{B^{\prime}}\hookrightarrow \End_{S}(A)$ is an embedding;
\item $C$ is an $\calO_{B^{\prime}}$-stable locally cyclic subgroup of $A[N^{+}]$ of order $(N^{+})^{2}$.
\end{enumerate}
It is well-known this moduli problem is coarsely representable by a projective scheme $\interX$ over $\ZZ[1/N]$ of dimension $1$. Let $\Lambda=\ZZ/l^{n}$ for some $n\geq 1$ or a finite extension of $\ZZ_{l}$. Then we define $\calL^{\prime}_{k-2}(\Lambda)$ to be the local system given by the composite map
\begin{equation}\label{local-system}
\pi^{\mathrm{alg}}_{1}(X)\rightarrow K^{\prime}\rightarrow \calO^{\times}_{B^{\prime}, l}\cong \GL_{2}(\ZZ_{l})\rightarrow \GL_{k+1}(\ZZ_{l})
\end{equation}
To rigidify the moduli problem $\interX$, we choose an auxiliary integer $d\geq 5$ that is prime to $Nl$ and add a full level-$d$-structure to the above moduli problem that is we add the following data to the above moduli problem: let
\begin{equation}\label{d-full-level}
\nu_{d}: (\calO_{B^{\prime}}/d)_{S}\rightarrow A[d]
\end{equation}
be an isomorphism of $\calO_{B^{\prime}}$-stable group schemes.  By forgetting the data $\nu_{d}$, we have natural map $c_{d}: \interX_{d}\rightarrow \interX$ which is Galois with covering group $G_{d}:=(\calO_{B^{\prime}}/d)^{\times}/\{\pm1\}$. Then this new moduli problem is representable by a projective scheme $\mathfrak{X}_{d}$   over $\ZZ[1/Md]$ of relative dimension $1$. We will set 
\begin{equation*}
K^{\prime}_{d}=\{g=(g_{v})_{v}\in K^{\prime}: g_{v}\equiv \begin{pmatrix}1&0\\0& 1\end{pmatrix} \mod v \text{ for all } v\mid d\}. 
\end{equation*}
We will denote by $X_{d}$ the base-change of $\mathfrak{X}_{d}$ to $\QQ$. Then the $\CC$-point of  $X_{d}$ is given by
\begin{equation*}
X_{d}(\CC)=G^{\prime}(\QQ)\backslash \calH^{\pm} \times G^{\prime}(\mathbf{A}^{\infty})/K^{\prime}_{d}.
\end{equation*}
Let $\pi_{d}: \calA_{d}\rightarrow \interX_{d}$ be the universal abelian surface. The sheaf $R^{1}\pi_{*}\Lambda$ over $\interX_{d}$ is equipped with an action of $\calO_{B^{\prime}_{l}}=\rmM_{2}(\ZZ_{l})$.
We then define the following local system on $\mathfrak{X}_{d}$
\begin{equation*}
\calL^{\prime}_{k-2}(\Lambda):=\mathrm{Sym}^{k-2}e\cdot R^{1}\pi_{*}\Lambda, \hphantom{aa} \calL_{k-2}(\Lambda):=\mathrm{Sym}^{k-2}e\cdot R^{1}\pi_{*}\Lambda(\frac{k-2}{2})
\end{equation*}
where $e$ is the idempotent given by the matrix $\begin{pmatrix}1& 0\\0 &0\\ \end{pmatrix}$ in $\rmM_{2}(\ZZ_{l})$. There is another construction of this local system in \cite{Bess} whose cohomology is tied more closely to the Kuga-Sato variety that we will introduce below.  We briefly review this construction. Define
\begin{equation}
\mathfrak{L}^{\prime}_{2}(\Lambda)=\cap_{b\in B^{\prime}}\ker[R^{2}\pi_{k*}\Lambda\xrightarrow{b-\Nrd(b)} R^{2}\pi_{k*}\Lambda].
\end{equation}
We define the weight $k-2$ local system by
\begin{equation*}
\frakL^{\prime}_{k-2}(\Lambda):=\ker[\mathrm{Sym}^{m}\frakL_{2}(\Lambda)\xrightarrow{\Delta_{m}}\mathrm{Sym}^{m-2}\frakL_{2}(\Lambda)(-2)]
\end{equation*}
where $m=\frac{k-2}{2}$ and $\Delta_{m}$ is the Lapalace defined by 
\begin{equation*}
\Delta_{m}(x_{1}, \cdots, x_{m})=\sum_{1\leq i, j\leq m}(x_{i}, x_{j})x_{1}\cdots \hat{x}_{i}\cdots \hat{x}_{j} \cdots x_{m}
\end{equation*}
where $(\hphantom{a}, \hphantom{a})$ is the non-degenerate pairing 
\begin{equation*}
(\hphantom{a}, \hphantom{b}): \mathfrak{L}^{\prime}_{2}(\Lambda)\times \mathfrak{L}^{\prime}_{2}(\Lambda) \rightarrow \Lambda(-2)
\end{equation*}
induced by the Poincare duality
\begin{equation*}
(\hphantom{a}, \hphantom{b}): R^{2}\pi_{*}\Lambda \times  R^{2}\pi_{*}\Lambda \rightarrow \Lambda(-2).
\end{equation*}

\begin{lemma}
We have an isomorphism 
\begin{equation*}
\frakL^{\prime}_{k-2}(\Lambda)\cong \calL^{\prime}_{k-2}(\Lambda).
\end{equation*}
\end{lemma}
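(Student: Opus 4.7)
The plan is to identify both sides with $\mathrm{Sym}^{k-2}\calL_1(\Lambda)$, where $\calL_1(\Lambda) := e\cdot R^1\pi_*\Lambda$ is the rank-two local system carrying the standard representation. For $\calL'_{k-2}(\Lambda)$ this is definitional, so the work is in matching $\frakL'_{k-2}(\Lambda)$ with the same object.

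First I would establish $\frakL'_2(\Lambda) \cong \mathrm{Sym}^2\calL_1(\Lambda)$ up to a Tate twist. Since $\pi : \calA_d \to \interX_d$ is a smooth family of abelian surfaces, cup product gives $R^2\pi_*\Lambda \cong \wedge^2 R^1\pi_*\Lambda$. Because $l \nmid N^-$, the prime $l$ is split in $B'$ so that $\calO_{B'}\otimes\ZZ_l \cong M_2(\ZZ_l)$, and Morita theory yields a canonical decomposition $R^1\pi_*\Lambda \cong V_0 \otimes_\Lambda \calL_1(\Lambda)$, where $V_0$ is a rank-two module on which $B'$ acts via the standard representation of $M_2$ and $\calL_1$ carries the geometric monodromy with trivial $B'$-action. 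Taking the second exterior power,
$$R^2\pi_*\Lambda \;\cong\; \bigl(\wedge^2 V_0 \otimes \mathrm{Sym}^2\calL_1\bigr) \;\oplus\; \bigl(\mathrm{Sym}^2 V_0 \otimes \wedge^2\calL_1\bigr).$$
On the first summand $b \in B'$ acts by the scalar $\Nrd(b)$ via $\det V_0$, whereas on the second it acts by $\mathrm{Sym}^2(b)$, which is not a scalar. Intersecting the kernels of $b - \Nrd(b)$ over all $b \in B'$ therefore cuts out precisely the first summand, and identifying $\wedge^2 V_0$ with the twist supplied by the Weil pairing gives the desired isomorphism.

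Next I would invoke the classical Clebsch--Gordan decomposition for $\GL_2$. Writing $V = \calL_1$, there is an integral direct sum decomposition
$$\mathrm{Sym}^m(\mathrm{Sym}^2 V) \;\cong\; \bigoplus_{i=0}^{\lfloor m/2\rfloor}\mathrm{Sym}^{2m-4i}V \otimes (\det V)^{\otimes 2i},$$
whose denominators divide $m!$. Under assumption $(\mathrm{CR}^\star)(1)$ one has $l > k+1 > 2m+1$, so this decomposition is valid over $\Lambda$, and the Laplacian $\Delta_m$ is the canonical projection onto the summands with $i \geq 1$. Consequently $\ker\Delta_m \cong \mathrm{Sym}^{2m}V = \mathrm{Sym}^{k-2}\calL_1 = \calL'_{k-2}(\Lambda)$, giving the lemma.

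The main obstacle is the accurate bookkeeping of Tate twists: the $\Lambda(-2)$ in the target of $\Delta_m$ must be traced back through the Poincar\'e pairing $(\hphantom{a},\hphantom{a})$ on $\frakL'_2$, and the factor $\wedge^2 V_0$ absorbed in the first step must combine consistently with the $(k-2)/2$-fold twist built into the normalization. A cleaner alternative is to appeal directly to the parallel computation in Besser \cite{Bess}, observing that his argument, written originally over $\QQ_l$, extends integrally to $\Lambda$ as soon as $l > k + 1$.
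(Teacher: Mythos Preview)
Your proposal is correct and follows essentially the same two-step route as the paper: first identify $\frakL'_2(\Lambda)$ with $\mathrm{Sym}^2(e\cdot R^1\pi_*\Lambda)$, then realize $\mathrm{Sym}^{k-2}(e\cdot R^1\pi_*\Lambda)$ as $\ker\Delta_m$ inside $\mathrm{Sym}^m\mathrm{Sym}^2$. The paper simply cites Besser \cite[Theorem 5.8]{Bess} for the first step and calls the second an ``easy exercise,'' whereas you supply the Morita/$\wedge^2$ argument and the explicit Clebsch--Gordan decomposition; your remarks on the denominators and the hypothesis $l>k+1$ make transparent why the argument works integrally, a point the paper leaves implicit.
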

\begin{proof}
This result is contained in the proof of \cite[Theorem 5.8]{Bess}. We briefly outline the construction. First of all, there is an isomorphism between the rank $3$ local system
\begin{equation*}
\frakL^{\prime}_{2}(\Lambda)\cong \mathrm{Sym}^{2} e\cdot R^{1}\pi_{*}\Lambda.
\end{equation*}
Then it is an easy exercise to show that $\mathrm{Sym}^{k-2} e\cdot R^{1}\pi_{*}\Lambda$ is the kernel of the map
\begin{equation*}
\mathrm{Sym}^{m} \mathrm{Sym}^{2} e\cdot R^{1}\pi_{*}\Lambda \xrightarrow{\Delta_{m}} \mathrm{Sym}^{m-2} \mathrm{Sym}^{2} e\cdot R^{1}\pi_{*}\Lambda. 
\end{equation*}
 with $m=\frac{k-2}{2}$. The result follow from this. 
\end{proof}

Let $k\geq 2$ and let $\pi_{k, d}:\calW_{k, d}\rightarrow \interX_{d}$ be the \emph{Kuga-Sato variety of weight $k$} over $\interX_{d}$. This is defined by the $\frac{k-2}{2}$-fold fiber product of $\calA_{d}$ over $\interX_{d}$
\begin{equation*}
\calW_{k, d}:=\calA_{d}\times_{\mathfrak{X}_{d}}\calA_{d} \cdots \times_{\mathfrak{X}_{d}} \calA_{d}.
\end{equation*}
We will denote by $W_{k,d}$ the scheme $\calW_{k, d}\otimes \QQ$. We define $\epsilon_{d}$ to be the projector given by  
\begin{equation*}
\epsilon_{d}=\frac{1}{\vert G_{d}\vert}\sum_{g\in G_{d}}g. 
\end{equation*}
Then the following relation clearly holds:
\begin{equation*}
\epsilon_{d}\rmH^{1}(\mathfrak{X}_{d}\otimes\QQ^{\ac}, \calL^{\prime}_{k-2}(\Lambda))\cong \rmH^{1}(\mathfrak{X}\otimes\QQ^{\ac}, \calL^{\prime}_{k-2}(\Lambda)).
\end{equation*}

The cohomology of the Kuga-Sato variety and the cohomology of the local system $\calL^{\prime}_{k-2}(\Lambda)$ are closely related. 
\begin{lemma}\label{vanishing}
There is a projector $\epsilon_{k}$ on $\calW_{k, d}$ such that
\begin{equation*}
\epsilon_{k}\rmH^{*}(\calW_{k, d}\otimes\QQ^{\ac}, \Lambda)\cong \epsilon_{k}\rmH^{k-1}(\calW_{k, d}\otimes\QQ^{\ac}, \Lambda)\cong \rmH^{1}(\mathfrak{X}_{d}\otimes\QQ^{\ac}, \calL_{k-2}(\Lambda)).
\end{equation*}
\end{lemma}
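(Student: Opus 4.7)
The plan is to construct $\epsilon_k$ from the natural symmetries of the fibered self-product $\calW_{k,d}=\calA_d^{\times_{\mathfrak{X}_d}m}$ with $m=(k-2)/2$, and to read off both claims from the Leray spectral sequence for $\pi_{k,d}$ combined with the relative Künneth formula. This is essentially the projector used by Besser \cite{Bess} and Iovita--Spiess \cite{IS}, adapted from Scholl's classical elliptic Kuga--Sato construction. Concretely, I would set $\epsilon_k=\epsilon_{\Delta}\cdot\epsilon_{\mathrm{sym}}\cdot\epsilon_{B}$, where $\epsilon_{\mathrm{sym}}=\tfrac{1}{m!}\sum_{\sigma\in S_m}\sigma$ symmetrizes the $S_m$-action permuting the $m$ factors; $\epsilon_{B}$ is the fibrewise projector onto $\bigcap_{b\in\calO_{B^{\prime}}}\ker(\iota(b)-\Nrd(b))$ induced by the quaternionic multiplication $\iota\colon\calO_{B^{\prime}}\hookrightarrow\End(\calA_d)$; and $\epsilon_{\Delta}$ is the Laplace-kernel projector already appearing in the definition of $\frakL^{\prime}_{k-2}(\Lambda)$ above. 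These three operators commute, so the composition is well defined and idempotent.

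The smooth proper morphism $\pi_{k,d}\colon\calW_{k,d}\to\mathfrak{X}_d$ gives a Leray spectral sequence
\[
E_2^{i,j}=\rmH^{i}(\mathfrak{X}_d\otimes\QQ^{\ac},R^{j}\pi_{k,d,*}\Lambda)\Longrightarrow\rmH^{i+j}(\calW_{k,d}\otimes\QQ^{\ac},\Lambda),
\]
degenerating at $E_2$ by Deligne's theorem, and the relative Künneth formula yields $R^{j}\pi_{k,d,*}\Lambda\cong\bigoplus_{j_1+\cdots+j_m=j}\bigotimes_{a=1}^m R^{j_a}\pi_{d,*}\Lambda$. Using the $(\calO_{B^{\prime}}\otimes\ZZ_l)\cong\rmM_2(\ZZ_l)$-module decomposition $R^{1}\pi_{d,*}\Lambda\cong\ZZ_l^{2}\otimes V$ with $V$ of rank $2$, and expanding $R^{j_a}\pi_{d,*}\Lambda\cong\wedge^{j_a}(\ZZ_l^{2}\otimes V)$ into Schur functors of each tensor factor, one checks that $\epsilon_{B}$ annihilates $R^{j_a}\pi_{d,*}\Lambda$ unless $j_a=2$, in which case $\epsilon_{B}R^{2}\pi_{d,*}\Lambda$ is precisely the summand $\wedge^{2}\ZZ_l^{2}\otimes\Sym^{2}V\cong\frakL^{\prime}_2(\Lambda)$ singled out by the reduced-norm relation. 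Thus $\epsilon_{B}$ forces $j_a=2$ for every $a$, pinning the surviving Künneth summand to $j=k-2$; then $\epsilon_{\mathrm{sym}}\cdot\epsilon_{\Delta}$ cuts $\Sym^{m}\frakL^{\prime}_2(\Lambda)$ down to $\frakL^{\prime}_{k-2}(\Lambda)\cong\calL^{\prime}_{k-2}(\Lambda)$ by the preceding lemma.

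Collapsing the spectral sequence therefore yields
\[
\epsilon_{k}\rmH^{n}(\calW_{k,d}\otimes\QQ^{\ac},\Lambda)\cong\rmH^{n-(k-2)}(\mathfrak{X}_{d}\otimes\QQ^{\ac},\calL^{\prime}_{k-2}(\Lambda)),
\]
which is concentrated in degree $n=k-1$ because, for $k\geq 4$, the non-trivial irreducible local system $\calL^{\prime}_{k-2}(\Lambda)$ on the curve $\mathfrak{X}_d$ has no monodromy invariants or coinvariants, killing $\rmH^{0}$ and $\rmH^{2}$. The final identification with $\rmH^{1}(\mathfrak{X}_{d}\otimes\QQ^{\ac},\calL_{k-2}(\Lambda))$ then follows from the Tate-twist relation $\calL_{k-2}(\Lambda)=\calL^{\prime}_{k-2}(\Lambda)(\tfrac{k-2}{2})$ built into the paper's conventions, absorbed on the Kuga--Sato side by the corresponding twist of the coefficients as in the isomorphism $\epsilon_d\epsilon_k\rmH^{k-1}(W_{k,d},\calO(\tfrac{k}{2}))\cong\rmH^{1}(X,\calL_{k-2}(\calO)(1))$ mentioned earlier in the introduction.

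The main obstacle is the representation-theoretic bookkeeping of the middle paragraph: verifying that the reduced-norm kernel condition $\iota(b)=\Nrd(b)$ indeed picks out exactly the summand $\frakL^{\prime}_2(\Lambda)\subset R^{2}\pi_{d,*}\Lambda$ and annihilates every other $R^{j_a}\pi_{d,*}\Lambda$ for $j_a\in\{0,1,3,4\}$. This reduces to a calculation on a single fiber, handling the Schur-functor decomposition of $\wedge^{j_a}(\ZZ_l^{2}\otimes V)$ under $\rmM_2(\ZZ_l)\times\End_{B^{\prime}}(V)$ and invoking Poincaré duality between $R^{1}$ and $R^{3}$; everything else is a standard application of Leray, Künneth, and the preceding lemma.
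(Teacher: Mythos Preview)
Your construction of $\epsilon_k$ as a composite of the symmetrization, reduced-norm, and Laplacian projectors, followed by the Leray/K\"unneth analysis, is correct and is precisely the content of the Iovita--Spiess argument \cite[(67), Lemma~10.1]{IS} that the paper's one-line proof simply cites. Your observation about the Tate-twist discrepancy between $\calL_{k-2}$ and $\calL'_{k-2}$ is also accurate: the lemma as stated is slightly imprecise on this point, and your way of reconciling it with the introduction's normalization $\epsilon_d\epsilon_k\rmH^{k-1}(W_{k,d},\calO(\tfrac{k}{2}))\cong\rmH^1(X,\calL_{k-2}(\calO)(1))$ is the right reading.
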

\begin{proof}
This follows from the discussions in \cite[(67), Lemma 10.1]{IS}.
\end{proof}

\subsection{Shimura sets and quaternionic modular forms} Let $k\geq 2$ be an even integer such that $k<l-1 $. If $A$ is a ring, let $L_{k-2}(A)=\mathrm{Sym}^{k-2}(A^{2})$ be the set of homogeneous polynomials of degree $k-2$ with coefficient in $A$. We present $L_{k-2}(A)$ as
\begin{equation}
L_{k-2}(A)=\bigoplus_{\frac{-k}{2}\leq r\leq \frac{k}{2}}A\cdot\mathbf{v}_{r}
\end{equation}
with $\mathbf{v}_{r}:=X^{\frac{k-2}{2}-r}Y^{\frac{k-2}{2}+r}$.  It gives rise to a unitary representation
\begin{equation*}
\rho_{k}: \GL_{2}(A)\rightarrow \mathrm{Aut}_{A}(L_{k-2}(A))
\end{equation*}
such that $\rho_{k}(g)P(X, Y)=\det^{-\frac{(k-2)}{2}}(g)P((X, Y)g)$ for any $P(X, Y)\in L_{k}(A)$. Let $A$ be a $\ZZ_{(l)}$-algebra, we define a pairing 
\begin{equation*}
\langle\hphantom{a},\hphantom{v} \rangle: L_{k-2}(A)\times L_{k-2}(A)\rightarrow A 
\end{equation*}
by the following formula
\begin{equation*}
\langle\sum_{i}a_{i}\mathbf{v}_{i}, \sum_{j}b_{j}\mathbf{v}_{j}\rangle_{k-2}= \sum_{\frac{-k}{2}\leq r\leq \frac{k}{2}} a_{r}b_{-r}\cdot(-1)^{\frac{k-2}{2}+r}\frac{\Gamma(\frac{k}{2}+r)\Gamma(\frac{k}{2}-r)}{\Gamma(k-1)}.
\end{equation*}
For $P_{1}, P_{2}\in L_{k-2}(A)$, the pairing above has the following property
\begin{equation*}
\langle\rho_{k}(g)P_{1}, \rho_{k}(g)P_{2}\rangle=\langle P_{1}, P_{2} \rangle.
\end{equation*}
Let $p\nmid N$ be a prime. Let $B$ be the definite quaternion algebra of discriminant $pN^{-}$.  We let $G$ be the algebraic group over $\QQ$ defined by $B^{\times}$. If $U\subset G(\Adel^{(\infty)})$ is an open compact subgroup and $A$ is a $\ZZ_{l}$-algebra, we define the space $S^{B}_{k}(U, A)$ of $l$-adic quaternionic modular forms of weight $k$ with value in $A$ by
\begin{equation*}
S^{B}_{k}(U, A)=\{h: G(\Adel)\rightarrow L_{k-2}(A): h(agu)=\rho_{k}(u_{l}^{-1})h(g)\text{ for $a\in B^{\times}$ and $u\in U\cdot Z(\Adel^{(\infty)})$}\}.
\end{equation*}
In the case $U$ corresponds to an Eichler order $\calO_{B, N^{+}}$ of level $N^{+}$ in a fixed maximal order $\calO_{B}$, then we will simply write the space $S^{B}_{k}(U, A)$ as $S^{B}_{k}(N^{+}, A)$.  We will define an inner product on this space 
\begin{equation}\label{pairing}
\langle\hphantom{a}, \hphantom{b}\rangle_{B}: S^{B}_{k}(N^{+}, A)\times S^{B}_{k}(N^{+}, A) \rightarrow A
\end{equation}
by the following formula
\begin{equation}
\langle f_{1}, f_{2}\rangle_{B}=\sum_{g\in \mathrm{Cl(N^{+})}}\frac{1}{\vert\Gamma_{g}\vert}\langle f_{1}(g),f_{2}(g\tau^{N^{+}})\rangle_{k}
\end{equation}
where $\Gamma_{g}=(B^{\times}\cap g\widehat{\calO}_{B, N^{+}}^{\times}g^{-1}Z(\Adel^{(\infty)}))/\QQ^{\times}$ and $\mathrm{Cl(N^{+})}$ is a set of representatives of the 
\begin{equation*}
B^{\times}\backslash \widehat{B}^{\times}/\widehat{\calO}^{\times}_{B, N^{+}}\widehat{\QQ}^{\times}.
\end{equation*}

\subsection{Reductions of Shimura curves}
Let $p$ be a prime away from $N$. We will consider the base-change of $\interX_{d}$, $\interX$ to $\ZZ_{p^{2}}$ and we will denote them by the same notations. The special fiber  of $\interX_{d}$ and $\interX$ will be denoted by $\overline{X}_{d}$ and  $\overline{X}$. Let $x=(A, \iota, \bar{\eta})\in \overline{X}_{d}(\FF^{\ac}_{p})$ be an $\FF^{\ac}_{p}$-point. Then the $p$-divisible group $A[p^{\infty}]$ of $A$ can be written as  $A[p^{\infty}]=E[p^{\infty}]\times E[p^{\infty}]$ for a $p$-divisible group $E[p^{\infty}]$ associated to an elliptic curve $E$ and $\calO_{B^{\prime}}$ acts naturally via $\calO_{B^{\prime}}\otimes \ZZ_{p}=\rmM_{2}(\ZZ_{p})$. Depending on $E[p^{\infty}]$ is \emph{ordinary} or \emph{supersingular}, we will accordingly call $x$ ordinary or supersingular. Let $X^{\ss}_{d}$ be the closed sub-scheme of $\overline{X}_{d}$ given by those points that are supersingular and let $X^{\ord}_{d}=\overline{X}_{d}-X^{\ss}_{d}$ be its complement. We will refer to $X^{\ss}_{d}$ as the \emph{supersingular locus} and to $X^{\ord}_{d}$ as the \emph{ordinary locus}.  Let $B=B_{pN^{-}}$ be the definite quaternion algebra with discriminant $pN^{-}$ and $\calO_{B}$ be a maximal order. Note that we can naturally view $K^{\prime(p)}_{d}$, the prime-to-$p$ part of $K^{\prime}_{d}$, as an open compact subgroup of $G(\Adel^{(\infty, p)})=B^{\times}(\Adel^{(\infty, p)})$. The scheme $X^{\ss}_{d}$ is given by a finite set of points and we have the following parametrization of it. 

\begin{lemma}
We have an isomorphism
\begin{equation*}
X^{\ss}_{d}\cong B^{\times}(\QQ)\backslash B^{\times}(\Adel^{(\infty)})/ K^{\prime(p)}_{d}\calO^{\times}_{B_{p}}.
\end{equation*}
\end{lemma}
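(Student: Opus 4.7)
The plan is to realize the supersingular locus as a Shimura set for the definite quaternion algebra $B$, following the classical ``switch of invariants'' picture of Deuring-Ribet as extended to Shimura curves by Cerednik-Drinfeld and Boutot-Zink.

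First I would fix a base geometric point $x_0=(A_0,\iota_0,C_0,\nu_{d,0})\in X^{\ss}_{d}(\FF^{\ac}_{p})$ and identify the endomorphism algebra $D:=\End_{\calO_{B^{\prime}}}(A_0)\otimes\QQ$. The crucial input is that $D\cong B$ as $\QQ$-algebras: at each finite $\ell\neq p$ one has canonically $B\otimes\QQ_{\ell}\cong B^{\prime}\otimes\QQ_{\ell}$; at $p$ the supersingularity of the underlying elliptic $p$-divisible group forces $\End_{\calO_{B^{\prime}}}(A_0[p^{\infty}])\otimes\QQ$ to be the quaternion division algebra $B_{p}$; and at $\infty$ the Rosati involution is positive, so $D$ is definite. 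Combined with the ramification at primes dividing $N^{-}$ inherited from $B^{\prime}$, this pins down $D$ as the definite quaternion algebra of discriminant $pN^{-}$.

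Next I would define a map $\Psi\colon X^{\ss}_{d}(\FF^{\ac}_{p})\to B^{\times}(\QQ)\backslash B^{\times}(\Adel^{(\infty)})/K^{\prime(p)}_{d}\calO^{\times}_{B_{p}}$ as follows. Given a supersingular point $x=(A,\iota,C,\nu_{d})$, Manin's theorem on supersingular $p$-divisible groups produces an $\calO_{B^{\prime}}$-equivariant quasi-isogeny $\phi\colon A_{0}\to A$. For each finite place $\ell$, comparing the level data at $x_{0}$ and at $x$ through the induced isomorphism $V_{\ell}A_{0}\xrightarrow{\sim}V_{\ell}A$ (resp.\ on the rational Dieudonn\'{e} module at $p$) yields an element $g_{\ell}\in B^{\times}(\QQ_{\ell})$ via the local identifications above; the collection $g=(g_{\ell})\in B^{\times}(\Adel^{(\infty)})$ is the desired representative. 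Left-ambiguity modulo $B^{\times}(\QQ)$ encodes the choice of $\phi$ (pre-composition by $D^{\times}$); right-ambiguity modulo $K^{\prime(p)}_{d}$ at primes $\ell\neq p$ encodes the remaining freedom in trivializing $\widehat{T}^{(p)}A$ compatibly with $C$ and $\nu_{d}$; at $p$ the moduli problem carries no level structure, so the full $\calO^{\times}_{B_{p}}$ acts.

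For the inverse, Serre's tensor construction (equivalently, isogeny to a twisted target) produces from $g\in B^{\times}(\Adel^{(\infty)})$ a new QM abelian surface $A_{g}$ obtained by twisting the prime-to-$p$ Tate lattice of $A_{0}$ by $g^{(p)}$ and the Dieudonn\'{e} lattice at $p$ by $g_{p}$; the level data $(C_{g},\nu_{d,g})$ are transported automatically. Checking that the two assignments are mutually inverse, modulo the two actions, closes the argument. The main obstacle will be the local $p$-adic step: one must verify both that every $\calO_{B^{\prime}}\otimes\ZZ_{p}=\rmM_{2}(\ZZ_{p})$-stable lattice in the rational Dieudonn\'{e} module of $A_{0}$ is an $\calO^{\times}_{B_{p}}$-orbit in a single $B_{p}^{\times}$-orbit, and that such a lattice algebraizes to a genuine QM $p$-divisible group via Serre-Tate and Grothendieck-Messing. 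This requires a careful use of the idempotent $e\in\rmM_{2}(\ZZ_{p})$ to cut down to the elliptic $p$-divisible group underlying $A_{0}$, where the maximal order $\calO_{B_{p}}$ appears as the endomorphism ring of the unique supersingular elliptic $p$-divisible group over $\FF^{\ac}_{p}$.
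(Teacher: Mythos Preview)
Your sketch is correct and follows precisely the classical Deuring--Serre argument that the paper invokes; the paper itself does not give a proof but simply records the result as well-known and refers to \cite[Lemma 9]{DT}. One small remark: the mention of Cerednik--Drinfeld is slightly off, since that uniformization concerns the special fiber at a prime of \emph{bad} reduction for the Shimura curve, whereas here $p\nmid N$ and one is working at a prime of good reduction; the relevant input is really just the Deuring--Serre description of the supersingular isogeny class together with the bookkeeping of level structures that you outline.
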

\begin{proof}
The lemma is well-known and can be proved using essentially the same method of the classical work Deuring and Serre. See \cite[Lemma 9]{DT} for example.
\end{proof}
We will write 
\begin{equation}\label{Shi-set}
X^{B}_{d}=B^{\times}(\QQ)\backslash B^{\times}(\Adel^{(\infty)})/ K^{\prime(p)}_{d}\calO^{\times}_{B_{p}}
\end{equation}
and refer to it as the \emph{Shimura set} associated to the definite quaternion algebra $B$ with level $\Gamma_{0}(N^{+})\cap\Gamma(d)$. Therefore the above lemma can be rephrased as an isomorphism
\begin{equation*}
X^{\ss}_{d}\cong X^{B}_{d}.
\end{equation*}
Let $\calO_{B^{\prime}, pN^{+}}$ be an Eichler order of level $pN^{+}$ and let $K^{\prime}(p)$ be the associated open compact subgroup in $G^{\prime}(\Adel^{\infty})$. Similarly as in \eqref{d-full-level}, we define the open compact subgroup $K^{\prime}_{d}(p)$ of $G^{\prime}(\Adel^{\infty})$ by adding a full level $d$-structure to $K^{\prime}(p)$.  We have the curve $X_{d}(p)$ over $\QQ$ whose complex points are given by
\begin{equation*}
X_{d}(p)(\CC)=G^{\prime}(\QQ)\backslash \calH^{\pm} \times G^{\prime}(\Adel^{\infty})/K^{\prime}_{d}(p).
\end{equation*}
We define an integral model $\interX_{d}(p)$ over $\ZZ[1/dN]$ which represents the following functor.  Let $S$ be a test scheme over $\ZZ[1/N]$. Then $\interX_{d}(p)(S)$ classifies the tuples $(A_{1}, A_{2}, \iota_{1}, \iota_{2}, \pi_{A}, C, \nu_{d})$ up to isomorphism where
\begin{enumerate}
\item $A_{i}$ for $i=1, 2$ is an $S$-abelian scheme of relative dimension $2$;
\item $\iota_{i}: \calO_{B^{\prime}}\hookrightarrow \End_{S}(A_{i})$ is an action of $\calO_{B^{\prime}}$ on $\End_{S}(A_{i})$ for $i=1, 2$;
\item $\pi_{A}: A_{1}\rightarrow A_{2}$ is an isogeny of degree $p$ that commutes with the action of $\calO_{B^{\prime}}$;
\item $C$ is an $\calO_{B^{\prime}}$-stable locally cyclic subgroup of $A[N^{+}]$ of order $N^{+2}$;
\item $\nu_{d}: (\calO_{B^{\prime}}/d)_{S}\rightarrow A_{1}[d]$ is an isomorphism of $\calO_{B^{\prime}}$-stable group schemes. 
\end{enumerate}
By forgetting the data given by $\nu_{d}$, we have natural map $c_{d}(p): \interX_{d}(p)\rightarrow\interX_{0}(p)$ where $\interX_{0}(p)$ is the coarse moduli space representing the above functor without the data $\nu_{d}$. Note that the isogeny $\pi_{A}$ induces an isomorphism between $\gamma_{d}:A_{1}[d]\xrightarrow{\sim}A_{2}[d]$ and $\gamma_{N^{+}}:A_{1}[N^{+}]\xrightarrow{\sim}A_{2}[N^{+}]$. Again we consider the base-change of $\interX_{d}(p)$ to $\ZZ_{p^{2}}$ and use the same symbol for this base-change and denote its special fiber by $\overline{X}_{d}(p)$. Let $\overline{X}_{0}(p)$ be the image of $\overline{X}_{d}(p)$ under the map $c_{d}(p)$. We have the following descriptions of $\overline{X}_{d}(p)$ and $\overline{X}_{0}(p)$. Similarly let  $(\overline{X}, X^{\ord}, X^{\ss})$ be the image of  $(\overline{X}_{d}, X^{\ord}_{d}, X^{\ss}_{d})$ under the map $c_{d}$, 
we will call $(X^{\ord}, X^{\ss})$ the ordinary and supersingular locus of $\overline{X}$.

\begin{lemma}
The scheme $\overline{X}_{d}(p)$ consists of two irreducible components both isomorphic to $\overline{X}_{d}$ which cross transversally at the supersingular locus ${X}^{ss}_{d}(p)$ of $\overline{X}_{d}(p)$ which can be identified with the supersingular locus of $\overline{X}_{d}$. A similar statement holds for $\overline{X}_{0}(p)$. 
\end{lemma}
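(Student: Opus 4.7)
The plan is to mimic the Deligne--Rapoport description of $X_{0}(p)$ in characteristic $p$, adapted to the quaternionic setting, by exhibiting two Frobenius/Verschiebung branches glued along the supersingular locus. First, recall the moduli interpretation of $\interX_{d}(p)$: a geometric point in characteristic $p$ is a datum $(A_{1},A_{2},\iota_{1},\iota_{2},\pi_{A},C,\nu_{d})$ with $\pi_{A}\colon A_{1}\to A_{2}$ a QM-equivariant isogeny whose kernel $K_{A}\subset A_{1}[p]$ is an $\calO_{B^{\prime}}$-stable finite flat subgroup scheme. Using $A_{1}[p^{\infty}]=E_{1}[p^{\infty}]\oplus E_{1}[p^{\infty}]$ with $\calO_{B^{\prime}}\otimes\ZZ_{p}=\rmM_{2}(\ZZ_{p})$ acting in the standard way, any such $K_{A}$ is of the form $K\oplus K$ where $K\subset E_{1}[p]$ is an order-$p$ subgroup; hence the classification of $K_{A}$ is equivalent to the classification of order-$p$ subgroups of $E_{1}[p]$.

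The next step is to cut $\overline{X}_{d}(p)$ into two pieces. Over the ordinary locus, $E_{1}[p]\cong\mu_{p}\oplus\ZZ/p$, so exactly two choices of $K$ exist: the connected kernel $\ker F_{E_{1}}$ and the \'etale kernel. Let $W_{\infty}\subset \overline{X}_{d}(p)$ be the reduced closure of the locus where $K_{A}$ is connected, and $W_{0}$ the reduced closure where $K_{A}$ is \'etale. On $W_{\infty}$ the datum $(A_{2},\pi_{A})$ is determined by $A_{1}$ via $A_{2}=A_{1}^{(p)}$ and $\pi_{A}=F_{A_{1}}$, so the forgetful morphism $(A_{1},A_{2},\pi_{A},C,\nu_{d})\mapsto (A_{1},C,\nu_{d})$ identifies $W_{\infty}$ with $\overline{X}_{d}$. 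Dually, on $W_{0}$ the pair $(A_{1},\pi_{A})$ is determined by $A_{2}$ via $A_{1}=A_{2}^{(p^{-1})}$ and $\pi_{A}=V_{A_{2}}$, giving a similar identification $W_{0}\cong\overline{X}_{d}$. Together with the fact that $\overline{X}_{d}(p)$ is one-dimensional and equidimensional, this shows $\overline{X}_{d}(p)=W_{\infty}\cup W_{0}$ as a set-theoretic (and in fact scheme-theoretic, as we verify below) decomposition into irreducible components.

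To locate the crossings, recall that at a supersingular point $E_{1}[p]$ is a local--local group scheme with a \emph{unique} subgroup of order $p$, namely $\ker F_{E_{1}}=\ker V_{E_{1}}$. Thus the connected and \'etale conditions both degenerate to this unique subgroup, so $W_{\infty}\cap W_{0}$ maps bijectively onto the supersingular locus of $\overline{X}_{d}$, and under either projection it is identified with $X^{ss}_{d}$. Finally, for transversality one invokes Serre--Tate deformation theory (equivalently, the Drinfeld--Katz--Mazur local model for $\Gamma_{0}(p)$-level structure): at a supersingular point the complete local ring of $\interX_{d}(p)$ is $W(\FF^{\ac}_{p})[[x,y]]/(xy-p)$, with $x=0$ (resp.\ $y=0$) cutting out the Frobenius branch $W_{\infty}$ (resp.\ the Verschiebung branch $W_{0}$); reducing mod $p$ yields $\FF^{\ac}_{p}[[x,y]]/(xy)$, which is an ordinary double point. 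This gives the transversality. The statement for $\overline{X}_{0}(p)$ then follows by descending the decomposition $\overline{X}_{d}(p)=W_{\infty}\cup W_{0}$ along the finite \'etale Galois cover $c_{d}(p)\colon \overline{X}_{d}(p)\to\overline{X}_{0}(p)$ with group $G_{d}$: each branch $W_{\infty},W_{0}$ descends to a copy of $\overline{X}$, and the crossing locus $X^{ss}_{d}$ descends to $X^{ss}$.

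The main obstacle is the transversality at the supersingular points: a clean statement requires knowing the universal deformation ring of a QM-pair $(A_{1},A_{2},\pi_{A})$ at a supersingular point. This is where one uses the Morita equivalence to reduce to the deformation theory of supersingular elliptic curves equipped with a subgroup of order $p$ (or equivalently with $\Gamma_{0}(p)$-structure), invokes Grothendieck--Messing / Serre--Tate, and verifies directly that the local model is $\Spec\FF^{\ac}_{p}[[x,y]]/(xy)$. All other steps are essentially bookkeeping.
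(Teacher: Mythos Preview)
Your argument is correct and is essentially the content of the reference the paper invokes. The paper does not give an independent proof: it simply cites \cite[Theorem 4.7(v)]{Buzzard}. What you have written is precisely a sketch of Buzzard's proof---the Deligne--Rapoport picture transported to the quaternionic setting via Morita equivalence, with the two components cut out by the connected/\'etale dichotomy on the kernel of $\pi_{A}$, and the transversality supplied by the local model $W(\FF^{\ac}_{p})[[x,y]]/(xy-p)$ at supersingular points. So there is no genuine difference in approach; you have unpacked the cited result rather than taken an alternative route. One small point worth tightening: when you write ``$K_{A}=K\oplus K$ where $K\subset E_{1}[p]$ is an order-$p$ subgroup'', you are implicitly using that the $\calO_{B^{\prime}}$-stable kernel is \emph{locally cyclic} of rank $p^{2}$ (this is built into the moduli problem), which is what forces it to correspond under Morita to a single order-$p$ subgroup of $E_{1}[p]$ rather than, say, all of $E_{1}[p]$.
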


\begin{proof}
This is proved in \cite[Theorem 4.7(v)]{Buzzard}.
\end{proof}

Let $\pi_{1}:\interX_{d}(p)\rightarrow \interX_{d}$ be the morphism given by sending $(A_{1}, A_{2}, \pi_{A}, \iota_{1}, \iota_{2}, C, \nu_{d})$ to $(A_{1}, \iota_{1}, C, \nu_{d})$ and  $\pi_{2}:\interX_{d}(p)\rightarrow \interX_{d}$ be the morphism given by sending $(A_{1}, A_{2}, \pi_{A}, \iota_{1}, \iota_{2}, C, \nu_{d})$ to $$(A_{2}, \iota_{2}, \gamma_{N^{+}}(C), \gamma_{d}\circ\nu_{d}).$$ Then we can define two closed immersions $i_{1}:\overline{X}_{d}\rightarrow \overline{X}_{d}(p)$ and $i_{2}: \overline{X}_{d}\rightarrow \overline{X}_{d}(p)$ as in the proof of \cite[Theorem 4.7(v)]{Buzzard} such that 
\begin{equation}\label{inter-matrix}
\begin{pmatrix}
\pi_{1}\circ i_{1} &   \pi_{1}\circ i_{2}\\
\pi_{2}\circ i_{1} &  \pi_{2}\circ i_{2}\\ 
\end{pmatrix}
=
\begin{pmatrix}
\mathrm{id} &   \Frob_{p}\\
S^{-1}_{p}\Frob_{p} & \mathrm{id}\\ 
\end{pmatrix}.
\end{equation}
where $S_{p}$ corresponds to the central element in the spherical Hecke algebra of $\GL_{2}(\QQ_{p})$.

We will need the following important result known as  the ``Ihara's lemma". This is proved for the case of classical modular curves by Ribet \cite{Ri-rising} and Diamond-Taylor \cite{DT} for Shimura curves.
\begin{theorem}[{\cite[Theorem 4]{DT}}] \label{DT} We have the following statements
\begin{enumerate}
\item The kernel of the pull-back map
\begin{equation*}
(\pi^{*}_{1}+\pi^{*}_{2}): \rmH^{1}(\interX_{d}\otimes{\Qbar}, \calL_{k-2}(\FF_{l}))\oplus \rmH^{1}(\interX_{d}\otimes{\Qbar}, \calL_{k-2}(\FF_{l}))\rightarrow \rmH^{1}(\interX_{d}(p)\otimes{\Qbar},\calL_{k-2} (\FF_{l}))
\end{equation*}
is Eisenstein.
\item The cokernel of the push-forwad map
\begin{equation*}
(\pi_{1*}, \pi_{2*}): \rmH^{1}(\interX_{d}(p)\otimes{\Qbar}, \calL_{k-2}(\FF_{l}))\rightarrow \rmH^{1}(\interX_{d}\otimes{\Qbar},  \calL_{k-2}(\FF_{l}))\oplus \rmH^{1}(\interX_{d}\otimes{\Qbar},  \calL_{k-2}(\FF_{l}))
\end{equation*}
is Eisenstein.
\end{enumerate}
\end{theorem}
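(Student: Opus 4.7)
The plan is to reduce to a group-cohomological statement about $S$-arithmetic subgroups of $B^{\prime\times}$ and then apply an Ihara--Ribet argument. First I observe that by Poincar\'{e} duality on the smooth projective curve $\interX_{d}\otimes\QQ$ with coefficients in the self-dual (up to twist) local system $\calL_{k-2}(\FF_{l})$, the maps $(\pi_{1*},\pi_{2*})$ of statement (2) are dual to $\pi_{1}^{*}+\pi_{2}^{*}$ of (1), so it suffices to prove (1) and (2) then follows formally by dualising.

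For (1), the plan is to use the comparison theorem (available since $l\nmid Nd$) together with the Eichler--Shimura isomorphism to identify
\begin{equation*}
\rmH^{1}(\interX_{d}\otimes\Qbar, \calL_{k-2}(\FF_{l})) \cong \rmH^{1}(\Gamma, L_{k-2}(\FF_{l}))
\end{equation*}
where $\Gamma$ is the congruence subgroup of $B^{\prime\times}(\QQ)$ attached to $K^{\prime}_{d}$, and similarly at level $pN^{+}$ in terms of a subgroup $\Gamma(p)\subset\Gamma$; under these identifications $\pi_{1}^{*}+\pi_{2}^{*}$ is the map induced by the two natural embeddings of $\Gamma(p)$ corresponding to the identity and the Atkin--Lehner element at $p$. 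By strong approximation for $B^{\prime}$, the $S$-arithmetic group $\Gamma^{\prime}:=B^{\prime\times}(\ZZ[1/p])^{+}/\{\pm 1\}$ acts on the Bruhat--Tits tree $\calT$ of $\PGL_{2}(\QQ_{p})$ with fundamental domain a single edge whose vertex stabilisers are the two relevant conjugates of $\Gamma$ and whose edge stabiliser is $\Gamma(p)$. The Mayer--Vietoris spectral sequence of this tree action then identifies the kernel of $\pi_{1}^{*}+\pi_{2}^{*}$, modulo an explicit $\rmH^{0}$-contribution, with a Hecke-stable quotient of $\rmH^{1}(\Gamma^{\prime}, L_{k-2}(\FF_{l}))$.

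The core of the argument is therefore to prove that $\rmH^{1}(\Gamma^{\prime}, L_{k-2}(\FF_{l}))$ is Eisenstein. Following Diamond--Taylor, I would argue by contradiction: a non-Eisenstein Hecke eigensystem $\frakm$ in its support would produce, via the congruence subgroup property for $B^{\prime\times}$, a continuous two-dimensional representation of $\Gamma^{\prime}$ unramified at $p$ in a strong sense; comparison with the eigenvalues at $\Frob_{p}$ dictated by the tree action would then force this representation to be reducible, contradicting the non-Eisenstein assumption. The boundary contribution $\rmH^{0}(\Gamma(p), L_{k-2}(\FF_{l}))$ in the Mayer--Vietoris sequence must then be verified to be genuinely Eisenstein: $U_{p}$ acts there by the scalar $p^{\frac{k-2}{2}}(1+p)$, which places this piece in an Eisenstein ideal for every non-Eisenstein maximal ideal.

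The principal obstacle is handling the non-trivial coefficient sheaf $\calL_{k-2}(\FF_{l})$ rather than constant coefficients. For $k=2$ the classical argument of Ihara as refined by Ribet goes through verbatim; for $k>2$ one needs $L_{k-2}(\FF_{l})$ to remain irreducible as a $\GL_{2}(\FF_{l})$-representation, which is exactly what the assumption $k<l-1$ secures, and one needs a form of the congruence subgroup property strong enough to accommodate twisted coefficients. These are the substantive inputs of \cite[Theorem 4]{DT}; once they are in place the Ihara/Ribet dichotomy between Eisenstein and non-Eisenstein eigenclasses applies without essential modification.
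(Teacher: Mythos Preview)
The paper does not supply its own proof of this statement: it is quoted directly as \cite[Theorem 4]{DT} and used as a black box, with only the remark that the weight restriction can be relaxed via \cite{MS-Ihara}. So there is no ``paper's own proof'' to compare against beyond the citation itself.

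Your outline is a reasonable reconstruction of the Diamond--Taylor strategy: the duality reduction of (2) to (1), the passage to group cohomology of the arithmetic subgroup $\Gamma$, the Mayer--Vietoris sequence coming from the action of the $S$-arithmetic group $\Gamma'$ on the Bruhat--Tits tree at $p$, and the reduction to showing that $\rmH^{1}(\Gamma', L_{k-2}(\FF_{l}))$ is Eisenstein are all correct structural steps and match what is in \cite{DT}.

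The one place where your sketch is loose is the core step. You describe it as ``a non-Eisenstein eigensystem would produce, via the congruence subgroup property, a continuous two-dimensional representation of $\Gamma'$ \ldots\ forcing it to be reducible.'' That is not quite the mechanism. In \cite{DT} one lets $\Delta\subset\Gamma'$ be the congruence kernel of the map $\Gamma'\to\GL_{2}(\FF_{l})$ (through which $\Gamma'$ acts on $L_{k-2}(\FF_{l})$), and uses inflation--restriction to push the problem into $\Hom_{\Gamma'/\Delta}(\Delta^{\mathrm{ab}}, L_{k-2}(\FF_{l}))$. The congruence subgroup property for $\Gamma'$ (an $S$-arithmetic lattice in a form of $\mathrm{SL}_{2}$ over $\QQ$ with $|S|\geq 2$) then controls $\Delta^{\mathrm{ab}}$ as a $\Gamma'/\Delta$-module via congruence quotients, and it is this computation that forces the Hecke eigenvalues appearing to be of Eisenstein type. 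No auxiliary two-dimensional representation of $\Gamma'$ itself is built. Apart from this imprecision in the inner mechanism, your plan is sound and aligned with the cited source; since the paper only cites the result, nothing further is required here.
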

\begin{remark}
Here the restriction on weights in the original treatment of  \cite{DT} can be improved by using the recent work of \cite{MS-Ihara}.  However, we will rely on the freeness result on the Hecke module of quaternionic modular forms \cite[Proposition 6.8]{CH-1} and this result is proved under the weight restriction in the Fontaine-Laffaille range. 
\end{remark}

\subsection{Review of weight spectral sequence} Let $K$ be a henselian discrete valuation field with valuation ring $\calO_{K}$ and residue field $k$ of characteristic $p$. We fix a uniformizer $\pi$ of $\calO_{K}$. We set $S=\Spec(\calO_{K})$, $s=\Spec(k)$ and $\eta=\Spec(K)$. Let ${K^{\ac}}$ be a separable closure of $K$ and $K_{\ur}$ the maximal unramified extension of $K$ in $K^{\ac}$. We denote by $k^{\ac}$ the residue field of $K_{\ur}$. 
Let $I_{K}=\Gal(K^{\ac}/K_{\ur})\subset G_{K}=\Gal(K^{\ac}/K)$ be the inertia subgroup. Let $l$ be a prime different from $p$. We set $t_{l}: I_{K}\rightarrow \ZZ_{l}(1)$ to be the canonical surjection given by 
\begin{equation*}
\sigma \mapsto (\sigma(\pi^{1/l^{m}})/\pi^{1/l^{m}})_{m}
\end{equation*} 
for every $\sigma\in I_{K}$. 

Let $\mathfrak{X}$ be a \emph{strict semi-stable scheme} over $S$ purely of relative dimension $n$ which we also assume to be proper. This means that $\mathfrak{X}$ is locally of finite presentation and Zariski locally \'{e}tale over $$\Spec(\calO_{K}[X_{1}, \cdots, X_{n}]/(X_{1}\cdots X_{r}-\pi))$$ for some integer $1\leq r\leq n$. We let $X_{k}$ be the special fiber of $\mathfrak{X}$ and $X_{k^{\ac}}$ be its base-change to $k^{\ac}$. Let $X=\mathfrak{X}_{\eta}$ be the generic fiber of $\mathfrak{X}$ and $X_{K_{\ur}}$ be its base-change to $K_{\ur}$. We have the following natural maps
$i:X_{k}\rightarrow \mathfrak{X}$,  $j: X\rightarrow \mathfrak{X}$, $\bar{i}: X_{k^{\ac}}\rightarrow \mathfrak{X}_{\calO_{K_{\ur}}}$ and  $\bar{j}: X_{K_{\ur}}\rightarrow \mathfrak{X}_{\calO_{K_{\ur}}}$.  
We have the \emph{Nearby cycle sheaf} given by 
\begin{equation*}
R^{q}\Psi(\Lambda)= \bar{i}^{*}R^{q}\bar{j}_{*}\Lambda
\end{equation*}
and the \emph{Nearby cycle complex} given by 
\begin{equation*}
R\Psi(\Lambda)= \bar{i}^{*}R\bar{j}_{*}\Lambda.
\end{equation*}
By proper base-change, we have $\rmH^{*}(X_{k^{\ac}}, R\Psi(\Lambda))=\rmH^{*}(X_{K^{\ac}}, \Lambda)$. We can regard $R\Psi(\Lambda)$ as an object in the derived category $D^{+}(X_{k^{\ac}}, \Lambda[I_{K}])$ of sheaves of $\Lambda$-modules with continuous $I_{K}$-actions. Let $D_{1},\cdots, D_{m} $ be the set of irreducible components of $X_{k}$. For each index set $I\subset \{1, \cdots, m\}$ of cardinality $p$, we set $X_{I, k}=\cap_{i\in I} D_{i}$. This is a smooth scheme of dimension $n-p$. For $1\leq p \leq m-1$, we define
\begin{equation*}
X^{(p)}_{k}=\bigsqcup_{I\subset \{1, \cdots, m\}, \mathrm{Card}(I)=p+1} X_{I, k}
\end{equation*}
 and let $a_{p}: X^{(p)}_{k}\rightarrow X_{k}$ be the natural projection, we have $a_{p *}\Lambda=\wedge^{p+1}a_{0 *}\Lambda$.
 
 Let $T$ be an element in $I_{K}$ such that $t_{l}(T)$ is a generator of $\ZZ_{l}(1)$ then $T$ induces a nilpotent operator $T-1$ on $R\Psi(\Lambda)$. Let $N=(T-1)\otimes\breve{T}$ where $\breve{T}\in \ZZ_{l}(-1)$ is the dual of $t_{l}(T)$. Then with respect to this $N$, we have the \emph{monodromy filtration} $M_{\bullet}R\Psi(\Lambda)$ on $R\Psi(\Lambda)$ characterized by 
\begin{enumerate}
\item $M_{n}R\Psi(\Lambda)=0$ and $M_{-n-1}R\Psi(\Lambda)=0$;
\item $N: R\Psi(\Lambda)(1)\rightarrow R\Psi(\Lambda)$ sends $M_{r}R\Psi(\Lambda)(1)$ into $M_{r-2}R\Psi(\Lambda)$ for $r\in\ZZ$;
\item $N^{r}: Gr^{M}_{r}R\Psi(\Lambda)(r)\rightarrow Gr^{M}_{-r}R\Psi(\Lambda)$ is an isomorphism. 
\end{enumerate}

The monodromy filtration induces the \emph{weight spectral sequence}
\begin{equation}\label{wt-seq}
\rmE^{p,q}_{1}= \rmH^{p+q}(X\otimes{k^{\ac}}, Gr^{M}_{-p}R\Psi(\Lambda))\Rightarrow \rmH^{p+q}(X\otimes{k^{\ac}}, R\Psi(\Lambda))=\rmH^{p+q}(X\otimes{K^{\ac}}, \Lambda).
\end{equation}
The $\rmE_{1}$-term of this spectral sequence can be made explicit by
\begin{equation*}
\begin{aligned}
\rmH^{p+q}(X\otimes{k^{\ac}}, Gr^{M}_{-p}R\Psi(\Lambda))&=\bigoplus_{i-j=-p, i\geq0, j\geq0}\rmH^{p+q-(i+j)}(X^{(i+j)}_{k^{\ac}}, \Lambda(-i))\\
&=\bigoplus_{i\geq\mathrm{max}(0, -p)}\rmH^{q-2i}(X^{(p+2i)}_{k^{\ac}}, \Lambda(-i)).\\
\end{aligned}
\end{equation*}
This spectral sequence is first introduced by Rapoport-Zink in \cite{RZ} and thus is also known as the Rapoport-Zink spectral sequence. 

Let $\mathfrak{X}$ be a relative curve over $\calO_{K}$.  Then we can immediately calculate that
\begin{equation*}
\begin{aligned}
&Gr^{M}_{-1}R\Psi(\Lambda)=a_{1*}\Lambda[-1], \\
&Gr^{M}_{0}R\Psi(\Lambda)=a_{0*}\Lambda,  \\
&Gr^{M}_{1}R\Psi(\Lambda)=a_{1*}\Lambda[-1](-1). \\
\end{aligned}
\end{equation*}
The $\rmE_{1}$-page of the weight spectral sequence is thus given by
\begin{center}
\begin{tikzpicture}[thick,scale=0.9, every node/.style={scale=0.9}]
  \matrix (m) [matrix of math nodes,
    nodes in empty cells,nodes={minimum width=5ex,
    minimum height=5ex,outer sep=-5pt},
    column sep=1ex,row sep=1ex]{
                &      &     &     & \\
          2     &  \rmH^{0}(\mathfrak{X}\otimes k^{\ac}, a_{1*}\Lambda(-1)) &  \rmH^{2}(\mathfrak{X}\otimes k^{\ac}, a_{0*}\Lambda)  & & \\
          1     &       & \rmH^{1}(\mathfrak{X}\otimes k^{\ac}, a_{0*}\Lambda) &    & \\
          0     &    & \rmH^{0}(\mathfrak{X}\otimes k^{\ac}, a_{0*}\Lambda) &  \rmH^{0}(\mathfrak{X}\otimes k^{\ac}, a_{1*}\Lambda) &\\
    \quad\strut &   -1  &  0  &  1  & \strut \\};
\draw[thick] (m-1-1.east) -- (m-5-1.east) ;
\draw[thick] (m-5-1.north) -- (m-5-5.north) ;
\end{tikzpicture}
\end{center}
and it clearly degenerates at the $\rmE_{2}$-page.  We therefore have the monodromy filtration
\begin{equation*}
0\subset^{\rmE^{1,0}_{2}} M_{1}\rmH^{1}(\mathfrak{X}\otimes{K^{\ac}}, \Lambda)\subset^{\rmE^{0,1}_{2}} M_{0}\rmH^{1}(\mathfrak{X}\otimes{K^{ac}}, \Lambda)\subset^{\rmE^{-1,2}_{2}} M_{-1}\rmH^{1}(\mathfrak{X}\otimes{K^{\ac}},\Lambda)=\rmH^{1}(\mathfrak{X}\otimes{K^{\ac}},\Lambda)
\end{equation*}
with the graded pieces given by
\begin{equation}\label{grad-curve}
\begin{aligned}
&Gr^{M}_{-1}\rmH^{1}(\mathfrak{X}\otimes{K^{\ac}}, \Lambda)=\ker[\rmH^{0}(\mathfrak{X}\otimes k^{\ac}, a_{1*}\Lambda(-1))\xrightarrow{\tau} \rmH^{2}(\mathfrak{X}\otimes k^{\ac}, a_{0*}\Lambda)]\\
&Gr^{M}_{0}\rmH^{1}(\mathfrak{X}\otimes{K^{\ac}}, \Lambda)= \rmH^{1}(\mathfrak{X}\otimes k^{\ac}, a_{0*}\Lambda)\\
&Gr^{M}_{1}\rmH^{1}(\mathfrak{X}\otimes{K^{\ac}} \Lambda)= \coker[\rmH^{0}(\mathfrak{X}\otimes k^{\ac}, a_{0*}\Lambda)\xrightarrow{\rho} \rmH^{0}(\mathfrak{X}\otimes k^{\ac}, a_{1*}\Lambda)]\\
\end{aligned}
\end{equation}
where $\tau$ is the \emph{Gysin morphism} and $\rho$ is the \emph{restriction morphism}. 
Note that the monodromy action on $\rmH^{1}(X\otimes K^{\ac}, \Lambda(1))$ can be understood using the following commutative diagram
\begin{equation*}
\begin{tikzcd}
\rmH^{1}(\mathfrak{X}\otimes{K^{\ac}}, \Lambda(1)) \arrow[r] \arrow[d, "N"] & \ker[\rmH^{0}(\mathfrak{X}\otimes k^{\ac}, a_{1*}\Lambda)\xrightarrow{\tau} \rmH^{2}(\mathfrak{X}\otimes k^{\ac}, a_{0*}\Lambda(1))] \arrow[d, "N"] \\
\rmH^{1}(\mathfrak{X}\otimes{K^{\ac}}, \Lambda)                  & \coker[\rmH^{0}(\mathfrak{X}\otimes k^{\ac}, a_{0*}\Lambda)\xrightarrow{\rho} \rmH^{0}(\mathfrak{X}\otimes k^{\ac}, a_{1*}\Lambda)] \arrow[l]
\end{tikzcd}
\end{equation*}
In this case, we recover the \emph{Picard-Lefschetz formula} if we identify $\rmH^{0}(\interX\otimes k^{\ac}, a_{1*}\Lambda)$ with the \emph{vanishing cycles} $\bigoplus_{x}R\Phi(\Lambda)_{x}$ on $X_{{k^{\ac}}}$ where $x$ runs through the singular points $X^{(1)}_{k^{\ac}}$ on $X_{k^{\ac}}$. 
Let $\rmM$ be a $G_{K}$-module over $\Lambda$, then we have the following exact sequence of Galois cohomology groups
\begin{equation}\label{fin-sing}
0\rightarrow \rmH^{1}_{\mathrm{fin}}(K, \rmM)\rightarrow \rmH^{1}(K, \rmM)\xrightarrow{\partial_{p}} \rmH^{1}_{\sing}(K, \rmM)\rightarrow 0  
\end{equation}
where  $\rmH^{1}_{\mathrm{fin}}(K, \rmM)=\rmH^{1}(k, \rmM^{I_{K}})$
is called the \emph{unramified} or \emph{finite} part of the cohomology group $\rmH^{1}(K, \rmM)$ and $\rmH^{1}_{\mathrm{sing}}(K, \rmM)$ defined as the quotient of $\rmH^{1}(K, \rmM)$ by its finite part is called the \emph{singular quotient} of  $\rmH^{1}(K, \rmM)$. The natural quotient map $\rmH^{1}(K, \rmM)\xrightarrow{\partial_{p}} \rmH^{1}_{\sing}(K, \rmM)$ will be referred to as the \emph{singular quotient map}. The element $\partial_{p}(x)$ will be referred to as the singular residue of $x$ for  $x\in \rmH^{1}(K, \rmM)$. Let $\rmM=\rmH^{n}(X_{K^{\ac}},\Lambda(r))$ be the $r$-th twist of the middle degree cohomology of $X_{{K^{\ac}}}$. We need the following elementary lemma.
\begin{lemma}\label{fin-sing-lemma}
Let $\rmM=\rmH^{n}(X_{K^{\ac}},\Lambda(r))$, then we have 
\begin{equation*}
\rmH^{1}_{\mathrm{fin}}(K, \rmM)\cong \frac{\rmM^{I_{K}}}{(\Frob_{p}-1)},\hphantom{a}\rmH^{1}_{\sing}(K, \rmM)\cong (\frac{\rmM(-1)}{N\rmM})^{G_{k}}.
\end{equation*}
\end{lemma}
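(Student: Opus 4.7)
The plan is to prove the two identifications separately using the inflation–restriction sequence together with Grothendieck's monodromy theorem applied to $M = \rmH^{n}(X_{K^{\ac}}, \Lambda(r))$.

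I first handle the finite part. By definition $\rmH^{1}_{\mathrm{fin}}(K, M) = \rmH^{1}(G_k, M^{I_K})$. Since $G_k \cong \widehat{\ZZ}$ is procyclic with topological generator $\Frob_p$, continuous cohomology of any $G_k$-module $N$ is computed by the two-term complex $N \xrightarrow{\Frob_p - 1} N$, so $\rmH^{1}(G_k, N) \cong N/(\Frob_p - 1)N$. Taking $N = M^{I_K}$ gives the first isomorphism.

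For the singular part, the five-term inflation–restriction sequence for $I_K \triangleleft G_K$ with quotient $G_k$ reads
\begin{equation*}
0 \to \rmH^{1}(G_k, M^{I_K}) \to \rmH^{1}(K, M) \to \rmH^{1}(I_K, M)^{G_k} \to \rmH^{2}(G_k, M^{I_K}).
\end{equation*}
Because $G_k$ has cohomological dimension one, the last term vanishes, so $\rmH^{1}_{\sing}(K, M) \cong \rmH^{1}(I_K, M)^{G_k}$. It then remains to identify $\rmH^{1}(I_K, M)$ with $M(-1)/NM$ as $G_k$-modules. Since $l \neq p$, the wild inertia acts through a finite $p$-power quotient and contributes trivially to $l$-adic cohomology, whence $\rmH^{1}(I_K, M) = \rmH^{1}(\ZZ_l(1), M)$ where $\ZZ_l(1)$ denotes the pro-$l$ tame quotient. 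By Grothendieck's monodromy theorem a topological generator $\sigma$ acts on $M$ as $\exp(t_l(\sigma) \cdot N)$, so the image of $(\sigma - 1)$ on $M$ is canonically identified, after absorbing $t_l(\sigma) \in \ZZ_l(1)$, with $NM \subset M(-1)$ tensored back to $M$. Computing the cohomology of the procyclic group $\ZZ_l(1)$ as $G_k$-equivariant continuous homomorphisms into the coinvariants yields
\begin{equation*}
\rmH^{1}(\ZZ_l(1), M) \cong \Hom_{\mathrm{cts}}(\ZZ_l(1), M_{I_K}) \cong M_{I_K}(-1) \cong M(-1)/NM,
\end{equation*}
and taking $G_k$-invariants gives the desired formula.

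The main obstacle will be the careful handling of the Tate twist in the identification $\rmH^{1}(\ZZ_l(1), M) \cong M(-1)/NM$: the shift by $(-1)$ appears precisely because the inertia acts through the cyclotomic $G_k$-module $\ZZ_l(1)$ rather than the trivial $\ZZ_l$, and the canonical monodromy operator $N$ is most naturally regarded as a $G_K$-equivariant map $M \to M(-1)$. Once this canonical isomorphism is secured via Grothendieck's monodromy theorem, everything else is formal bookkeeping with the five-term sequence and the procyclicity of $G_k$.
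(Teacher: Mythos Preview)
Your argument is correct and follows the standard route (inflation--restriction for $I_K\lhd G_K$, cohomological dimension one of $G_k$, reduction to the pro-$l$ tame quotient, and Grothendieck's monodromy theorem to identify the $I_K$-coinvariants with $M/NM$). The paper itself does not give a proof at all: it simply declares the lemma well-known and refers to \cite[Lemma~2.6]{Liu-cubic}, whose argument is exactly the one you have written out. One small sharpening: to conclude $M^{P}=M$ (not just $\rmH^{i}(P,M)=0$ for $i>0$) you should invoke the unipotence of the inertia action coming from the strict semistability hypothesis on $\mathfrak{X}$, rather than only that $P$ is pro-$p$; you use Grothendieck's monodromy theorem a few lines later anyway, so this is just a matter of ordering.
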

\begin{proof}
This is well-known. The details can be found for example \cite[Lemma 2.6]{Liu-cubic}. 
\end{proof}

For $\rmM=\rmH^{1}(X_{K^{\ac}}, \Lambda(1))$, we can use the Picard-Lefschetz formula to calculate $\rmH^{1}_{\sing}(K, \rmM)$, we have 
\begin{equation}\label{1-sing}
\begin{aligned}
\rmH^{1}_{\sing}(K, \rmM) &\cong (\frac{\rmM(-1)}{N\rmM})^{G_{k}}
\cong (\frac{\coker[\rmH^{0}(\interX\otimes k^{\ac}, a_{0*}\Lambda)\xrightarrow{\rho} \rmH^{0}(\interX\otimes k^{\ac}, a_{1*}\Lambda)]}{N\ker[\rmH^{0}(\interX\otimes k^{\ac}, a_{1*}\Lambda)\xrightarrow{\tau} \rmH^{2}(\interX\otimes k^{\ac}, a_{0*}\Lambda(1))]})^{G_{k}}.\\
\end{aligned}
\end{equation}
Composing the isomorphism \eqref{1-sing} with $\tau$,  we have 
\begin{equation}
\begin{aligned}
 \rmH^{1}_{\sing}(K, \rmM)&\cong(\frac{\coker[\rmH^{0}(\interX\otimes k^{\ac}, a_{0*}\Lambda)\xrightarrow{\rho} \rmH^{0}(\interX\otimes k^{\ac}, a_{1*}\Lambda)]}{N\ker[\rmH^{0}(\interX\otimes k^{\ac}, a_{1*}\Lambda)\xrightarrow{\tau} \rmH^{2}(\interX\otimes k^{\ac}, a_{0*}\Lambda(1)))]})^{G_{k}}\\
 &\cong \coker[\rmH^{0}(\interX\otimes k^{\ac}, a_{0*}\Lambda)\xrightarrow{\rho} \rmH^{0}(\interX\otimes k^{\ac}, a_{1*}\Lambda)\xrightarrow{\tau}  \rmH^{2}(\interX\otimes k^{\ac}, a_{0*}\Lambda(1))]^{G_{k}}.\\
\end{aligned}
\end{equation}

Next we consider the curve $\mathfrak{X}_{d}(p)$ over $\Spec(\ZZ_{p^{2}})$. Let $\calA_{d}(p)\rightarrow \mathfrak{X}_{d}(p)$ be the universal abelian surface over $\interX_{d}(p)$. Then we define the \emph{Kuga-Sato variety} of weight $k$ by the $\frac{k-2}{2}$-fold fiber product of $\calA_{d}(p)$ over $\mathfrak{X}_{d}(p)$ that is
\begin{equation}\label{KS-var}
\calW_{k, d}(p):=\calA_{d}(p)\times_{\mathfrak{X}_{d}(p)}\calA_{d}(p) \cdots \times_{\mathfrak{X}_{d}(p)} \calA_{d}(p).
\end{equation}
Then there is a semi-stable model $\widetilde{\calW}_{k, d}(p)$ constructed in \cite[Lemma 4]{Saito2} and the action of the idempotent $\epsilon_{k}$ extends naturally to the semi-stable model  $\widetilde{\calW}_{k, d}(p)$. Moreover the first page of the weight spectral sequence converges to 
\begin{equation*}
\epsilon_{k}\rmH^{k-1}(\widetilde{\calW}_{k, d}(p)\otimes \QQ_{p}^{\ac}, \Lambda(\frac{k-2}{2}))=\rmH^{1}(\interX_{d}(p)\otimes \QQ_{p}^{\ac}, \calL_{k-2}(\Lambda))
\end{equation*}
takes the following form by \cite[page 37]{Saito2}.
\begin{center}
\begin{tikzpicture}[thick,scale=0.9, every node/.style={scale=0.9}]
  \matrix (m) [matrix of math nodes,
    nodes in empty cells,nodes={minimum width=5ex,
    minimum height=5ex,outer sep=-5pt},
    column sep=1ex,row sep=1ex]{
                &      &     &     & \\
          2     &  \rmH^{0}(\overline{X}_{d}(p)_{\FF_{p}^{\ac}}, a_{1*}\calL_{k-2}(\Lambda)(-1)) &  \rmH^{2}(\overline{X}_{d}(p)_{\FF_{p}^{\ac}}, a_{0*}\calL_{k-2}(\Lambda))  & & \\
          1     &       & \rmH^{1}(\overline{X}_{d}(p)_ {\FF_{p}^{\ac}}, a_{0*}\calL_{k-2}(\Lambda)) &    & \\
          0     &    & \rmH^{0}(\overline{X}_{d}(p)_{\FF_{p}^{\ac}}, a_{0*}\calL_{k-2}(\Lambda)) &  \rmH^{0}(\overline{X}_{d}(p)_{\FF_{p}^{\ac}}, a_{1*}\calL_{k-2}(\Lambda)) &\\
    \quad\strut &   -1  &  0  &  1  & \strut \\};
\draw[thick] (m-1-1.east) -- (m-5-1.east) ;
\draw[thick] (m-5-1.north) -- (m-5-5.north) ;
\end{tikzpicture}
\end{center}
This means that the weight spectral sequence of the Kuga-Sato variety agree with the weight spectral sequence of the base curve with certain non-trivial coefficient. By further applying the projector $\epsilon_{d}$, we obtain the following first page of the weight spectral sequence converging to $$\rmH^{1}( \interX_{0}(p)\otimes \QQ_{p}^{\ac}, \calL_{k-2}(\Lambda)).$$
\begin{center}
\begin{tikzpicture}[thick,scale=0.9, every node/.style={scale=0.9}]
  \matrix (m) [matrix of math nodes,
    nodes in empty cells,nodes={minimum width=5ex,
    minimum height=5ex,outer sep=-5pt},
    column sep=1ex,row sep=1ex]{
                &      &     &     & \\
          2     &  \rmH^{0}(\overline{X}_{0}(p)_{ \FF_{p}^{\ac}}, a_{1*}\calL_{k-2}(\Lambda)(-1)) &  \rmH^{2}(\overline{X}_{0}(p)_{ \FF_{p}^{\ac}}, a_{0*}\calL_{k-2}(\Lambda))  & & \\
          1     &       & \rmH^{1}(\overline{X}_{0}(p)_{\FF_{p}^{\ac}}, a_{0*}\calL_{k-2}(\Lambda)) &    & \\
          0     &    & \rmH^{0}(\overline{X}_{0}(p)_{\FF_{p}^{\ac}}, a_{0*}\calL_{k-2}(\Lambda)) &  \rmH^{0}(\overline{X}_{0}(p)_{\FF_{p}^{\ac}}, a_{1*}\calL_{k-2}(\Lambda)) &\\
    \quad\strut &   -1  &  0  &  1  & \strut \\};
\draw[thick] (m-1-1.east) -- (m-5-1.east) ;
\draw[thick] (m-5-1.north) -- (m-5-5.north) ;
\end{tikzpicture}
\end{center}
Note that we can make it explicit for the terms in the above spectral sequence
\begin{enumerate}
\item $ \rmH^{0}(\overline{X}_{0}(p)_{\FF_{p}^{\ac}}, a_{1*}\calL_{k-2}(\Lambda))=\rmH^{0}(X^{\ss}_{\FF_{p}^{\ac}}, \calL_{k-2}(\Lambda))$;
\item  $\rmH^{1}(\overline{X}_{0}(p)_{\FF_{p}^{\ac}}, a_{0*}\calL_{k-2}(\Lambda))=\rmH^{1}(\overline{X}_{\FF_{p}^{\ac}}, \calL_{k-2}(\Lambda))\oplus \rmH^{1}(\overline{X}_{\FF_{p}^{\ac}}, \calL_{k-2}(\Lambda))$;
\item  $\rmH^{2}(\overline{X}_{0}(p)_{\FF_{p}^{\ac}}, a_{0*}\calL_{k-2}(\Lambda))$ is Eisenstein;
\item  $\rmH^{0}(\overline{X}_{0}(p)_{\FF_{p}^{\ac}}, a_{0*}\calL_{k-2}(\Lambda))$ is Eisenstein.
\end{enumerate}

\subsection{Unramified level raising on the Kuga-Sato varities} Let $f\in S^{\new}_{k}(N)$ be a newform of level $\Gamma_{0}(N)$ with even weight $k$ and Fourier expansion $f=\sum a_{n}(f)q^{n}$. We denote by $E=\QQ(f)$ the Hecke field of $f$. Let $\lambda$ be a place of $E$ over $l$ and $E_{\lambda}$ be the completion of $E$ at $\lambda$.  Let $\varpi$ be a uniformizer of the ring of integers $\calO:=\calO_{E_{\lambda}}$ of $E_{\lambda}$ and $\FF_{\lambda}$ be its residue field. We will set $\calO_{n}=\calO/\varpi^{n}$. Let $K$ be an imaginary quadratic field whose discriminant is $-D_{K}$ with $D_{K}>0$. We assume that $N$ admits a factorization $N=N^{+}N^{-}$ where $N^{+}$ consists of prime factors that are split in $K$ and $N^{-}$ consists of prime factors that are inert in $K$. Let $\rho_{f, \lambda}: G_{\QQ}\rightarrow \GL_{2}(E_{\lambda})$ be the $\lambda$-adic Galois representation attached to the form $f$ characterized by the fact that the trace of Frobenius at $p\nmid N$ agrees with $a_{p}(f)$ and the determinant of $\rho_{f, \lambda}$ is $\epsilon_{l}^{k-1}$ with $\epsilon_{l}$ the $l$-adic cyclotomic character. We shall consider the twist $\rho^{*}_{f, \lambda}=\rho_{f, \lambda}(\frac{2-k}{2})$. Let $\rmV_{f, \lambda}$ be the representation space for $\rho^{*}_{f, \lambda}$.  We normalize the construction of $\rho_{f, \lambda}$ such that it occurs in the cohomology $\rmH^{1}(X_{\QQ^{\ac}}, \calL_{k-2}(E_{\lambda})(\frac{k}{2}))$ and therefore  $\rho^{*}_{f, \lambda}$ occurs in the cohomology $\rmH^{1}(X_{\QQ^{\ac}}, \calL_{k-2}(E_{\lambda})(1))$. Let $\TT=\TT(N^{+}, N^{-})$ be the $l$-adic completion of the integral Hecke algebra that acts faithfully on the subspace of $S_{k}(N)$ consisting of forms that are new at $N^{-}$ and let $\TT^{[p]}=\TT(N^{+}, pN^{-})$ be the $l$-adic completion of the integral Hecke algebra that acts faithfully on the subspace of $S_{k}(pN)$ consisting of modular forms that are new at $pN^{-}$.  The modular form $f$ gives rise to a homomorphism $\phi_{f}: \TT\rightarrow \calO$ corresponding to the Hecke eigensystem of $f$. FOr $n\geq 1$, we define $\phi_{f, n}: \TT\rightarrow \calO_{n}:=\calO/\varpi^{n}$ be the natural reduction of $\phi_{f}$ by $\varpi^{n}$.  We will define $I_{f, n}$ to be the kernel of the morphism $\phi_{f, n}$ and by $\frakm_{f}$ the unique maximal ideal of $\TT$ containing $I_{f, n}$. We fix a $G_{\QQ}$-stable lattice $\rmT_{f, \lambda}$ in $\rmV_{f, \lambda}$ and denote by $\rmT_{f, n}$ the reduction $\rmT_{f,\lambda}\mod \varpi^{n}$. We will always assume that the residual Galois representation $\bar{\rho}_{f, \lambda}$ satisfies the following assumption.

\begin{ass}[$\mathrm{CR}^{\star}$]  The residual Galois representation $\bar{\rho}_{f,\lambda}$ satisfies the following assumptions
\begin{enumerate}
\item $l>k+1$ and $|(\FF^{\times}_{l})^{k-1}|>5$;
\item $\bar{\rho}_{f,\lambda}$ is absolutely irreducible when restricted to $G_{\QQ(\sqrt{p^{*}})}$ where $p^{*}=(-1)^{\frac{p-1}{2}}p$;
\item If $q\mid N^{-}$ and $q\equiv \pm1\mod l$, then $\bar{\rho}_{f, \lambda}$ is ramified;
\item If $q\mid \mid N^{+}$ and $q\equiv 1\mod l$, then $\bar{\rho}_{f, \lambda}$ is ramified;
\item The Artin conductor $N_{\bar{\rho}}$ of $\bar{\rho}_{f, \lambda}$ is prime to $N/N_{\bar{\rho}}$;
\item There is a place $q\mid\mid N$ such that $\bar{\rho}_{f, \lambda}$ is ramified at $q$.
\end{enumerate}
\end{ass}

We will now prove a level raising result for the modular form $f$. First, we recall the following notion of $n$-admissible prime for $f$. 
\begin{definition}\label{adm}
We say a prime $p$  is $n$-admissible for $f$ if
\begin{enumerate}
\item $p\nmid Nl$;
\item $p$ is an inert in $K$;
\item $l$ does not divide $p^{2}-1$;
\item  $\varpi^{n}$ divides $p^{\frac{k}{2}}+p^{\frac{k-2}{2}}-\epsilon_{p}a_{p}(f)$ with $\epsilon_{p}\in\{\pm1\}$.
\end{enumerate}
\end{definition}

We consider the special fiber  $\overline{W}_{k, d}$ of $\calW_{k, d, \ZZ_{p^{2}}}$ and define its supersingular locus by $W^{ss}_{k, d}:=\pi^{-1}_{k, d}({X}^{ss}_{d})$. Similarly we define the supersingular locus of $\calW_{k, d}(p)$ by $W^{ss}_{k, d}(p):=\pi^{-1}_{k, d}(p)({X}^{ss}_{d}(p))$. We will consider the following \emph{ordinary-supersingular} excision exact sequence 
\begin{equation*}
\rmH^{\frac{k}{2}}(\overline{W}_{k, d, \FF^{\ac}_{p}}, \Lambda(\frac{k}{2}))\rightarrow \rmH^{\frac{k}{2}}(\overline{W}_{k, d, \FF^{\ac}_{p}}- W^{ss}_{k, d, \FF^{\ac}_{p}}, \Lambda(\frac{k}{2}))\rightarrow 
\rmH^{\frac{k}{2}+1}_{W^{ss}_{k, d, \FF^{\ac}_{p}}}(\overline{W}_{k, d,\FF^{\ac}_{p}}, \Lambda(\frac{k}{2})).
\end{equation*}
Apply the projector $\epsilon_{k}$ and localize at the maximal ideal $\frakm_{f}$, we in fact obtain an exact sequence 
\begin{equation*}
0\rightarrow\rmH^{1}(\overline{X}_{d, \FF^{\ac}_{p}}, \calL_{k-2}(\Lambda)(1))_{\frakm_{f}}\rightarrow \rmH^{1}(X^{\ord}_{d, \FF^{\ac}_{p}}, \calL_{k-2}(\Lambda)(1))_{\frakm_{f}}\rightarrow 
\rmH^{0}(X^{ss}_{d,\FF^{\ac}_{p}}, \calL_{k-2}(\Lambda))_{\frakm_{f}}\rightarrow 0
\end{equation*}
This follows from Lemma \ref{vanishing} and the following equation
\begin{equation*}
\rmH^{\frac{k}{2}+1}_{W^{ss}_{k, d}}(\calW_{k, d}\otimes\FF^{\ac}_{p}, \Lambda(\frac{k}{2}))\cong\bigoplus_{x\in X^{ss}_{d}}\rmH^{\frac{k-2}{2}}(\calA^{\frac{k-2}{2}}_{x}, \Lambda(\frac{k-2}{2}))
\end{equation*}
and
\begin{equation*}
\bigoplus_{x\in X^{\ss}_{d}}\epsilon_{k}\rmH^{\frac{k-2}{2}}(\calA^{\frac{k-2}{2}}_{x}, \Lambda(\frac{k-2}{2}))\cong \bigoplus_{x\in X^{ss}_{d}}L_{k-2}(\Lambda)
\end{equation*}
by the definition of $\epsilon_{k}$. The ordinary-supersingular excision exact sequence induces the following connecting homomorphism 
\begin{equation*}
\Phi_{d}: \rmH^{0}(X^{\ss}_{d, \FF^{\ac}_{p}}, \calL_{k-2}(\Lambda))^{G_{\FF_{p^{2}}}}_{\frakm_{f}}\rightarrow \rmH^{1}(\FF_{p^{2}},  \rmH^{1}(\overline{X}_{d, \FF^{\ac}_{p}}, \calL_{k-2}(\Lambda)(1))_{\frakm_{f}}).
\end{equation*}
By applying the projector $\epsilon_{d}$ to the above map, we have 
\begin{equation*}
\Phi: \rmH^{0}(X^{\ss}_{\FF^{\ac}_{p}}, \calL_{k-2}(\Lambda))^{G_{\FF_{p^{2}}}}_{\frakm_{f}}\rightarrow \rmH^{1}(\FF_{p^{2}},  \rmH^{1}(\overline{X}_{\FF^{\ac}_{p}}, \calL_{k-2}(\Lambda)(1))_{\frakm_{f}}).
\end{equation*}
By further taking the quotient by $I_{f, n}$, we have 
\begin{equation*}
\Phi_{n}: \rmH^{0}(X^{\ss}_{\FF^{\ac}_{p}}, \calL_{k-2}(\Lambda))^{G_{\FF_{p^{2}}}}_{/I_{f, n}}\rightarrow \rmH^{1}(\FF_{p^{2}},  \rmH^{1}(\overline{X}_{\FF^{\ac}_{p}}, \calL_{k-2}(\Lambda)(1))_{/I_{f, n}}).
\end{equation*}
The following theorem is known as the arithmetic level raising for the Kuaga-Sato variety. 

\begin{theorem}\label{level-raise-curve}
Let $p$ be an $n$-admissible prime for $f$. We assume that the residual Galois representation $\bar{\rho}_{f,\lambda}$ satisfies $(\mathrm{CR}^{\star})$. Then we have the following statements.
\begin{enumerate}
\item There exists a morphism $\phi^{[p]}_{f, n}: \TT^{[p]}\rightarrow \calO_{n}$ that agree with $\phi_{f, n}:\TT\rightarrow\calO_{n}$ for all the Hecke operators away from $p$ and sending $U_{p}$ to $\epsilon_{p}p^{\frac{k-2}{2}}$. 

\item Let $I^{[p]}_{f, n}$ be the kernel of the morphism $\phi^{[p]}_{f, n}$ and $\frakm^{[p]}_{f}$ be the maximal ideal containing $I_{f, n}$.  There exists a modular form $f^{[p]}\in S^{\new}_{k}(pN)$ such that the morphism $\phi^{[p]}_{f, n}$ lifts to $f^{[p]}$. 

\item The Hecke module  $\rmH^{1}(\interX\otimes\QQ^{\ac}, \calL_{k-2}(\calO)(1))_{\frakm_{f}}$ is free of rank $2$ over $\TT_{\frakm_{f}}$ and the Hecke module $S^{B}_{k}(N^{+}, \calO)_{\frakm^{[p]}_{f}}$ is free of rank one over $\TT^{[p]}_{\frakm^{[p]}_{f}}$.

\item We have a canonical isomorphism 
\begin{equation}\label{first-isom}
\Phi_{n}: \rmH^{0}(X^{\ss}_{\FF^{\ac}_{p}}, \calL_{k-2}(\calO))^{G_{\FF_{p^{2}}}}_{/I_{f, n}}\xrightarrow{\cong}  \rmH^{1}(\FF_{p^{2}}, \rmH^{1}(\overline{X}_{\FF^{\ac}_{p}}, \calL_{k-2}(\calO)(1))_{/I_{f, n}})\end{equation}
which can be identified with an isomorphism
\begin{equation}\label{second-isom}
\Phi_{n}: S^{B}_{k}(N^{+}, \calO)_{/I^{[p]}_{f, n}}\xrightarrow{\cong}  \rmH^{1}(\FF_{p^{2}}, \rmH^{1}(\overline{X}_{\FF^{\ac}_{p}}, \calL_{k-2}(\calO)(1))_{/I_{f, n}}).
\end{equation}
\end{enumerate}
\end{theorem}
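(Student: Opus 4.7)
The plan is to prove the four parts in order, treating (1) and (2) via standard level-raising, (3) by quoting the freeness results in the setting of $(\mathrm{CR}^\star)$, and then deriving (4) from a combined analysis of the ordinary-supersingular excision sequence, the weight spectral sequence, and Ihara's lemma.

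For parts (1) and (2), the $n$-admissibility condition says precisely that the characteristic polynomial $X^2-a_p(f)X+p^{k-1}$ of $\rho_{f,\lambda}(\Frob_p)$ has the root $\epsilon_p p^{(k-2)/2}$ modulo $\varpi^n$. Hence the prescription sending the Hecke operators in $\TT^{[p]}$ away from $p$ to the same images as under $\phi_{f,n}$, and $U_p\mapsto \epsilon_p p^{(k-2)/2}$, is a well-defined ring homomorphism $\phi^{[p]}_{f,n}\colon\TT^{[p]}\to\calO_n$; the existence of an actual newform $f^{[p]}\in S^{\new}_k(pN)$ realizing this eigensystem is a form of the higher-weight level-raising theorem of Ribet-Diamond-Taylor, which applies under $(\mathrm{CR}^\star)$. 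Part (3) is exactly \cite[Proposition~6.8]{CH-1}.

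For part (4), I would first establish \eqref{first-isom} and then match it with \eqref{second-isom}. The ordinary-supersingular excision sequence gives a four-term exact sequence of $G_{\FF_{p^2}}$-modules; taking continuous cohomology and invoking Lemma \ref{fin-sing-lemma} together with $l\nmid p^2-1$ identifies the surrounding terms of $\Phi_n$ with (respectively) the finite and singular parts of $\rmH^1(\QQ_{p^2},\rmH^1(\overline{X}_{\FF^{\ac}_p},\calL_{k-2}(\calO)(1))_{/I_{f,n}})$. Using the weight spectral sequence of the semistable model $\widetilde{\calW}_{k,d}(p)$ with the projectors $\epsilon_d\epsilon_k$, as recalled in the table at the end of \S 2.4, the finite quotient is controlled by $\rmH^1(\overline{X}_{\FF^{\ac}_p},\calL_{k-2}(\calO))$ and the singular quotient is controlled by the cokernel of the composition $\tau\circ\rho$ in \eqref{grad-curve}. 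The point is that after localizing at $\frakm_f$ and applying Theorem~\ref{DT}, both the kernel of $\pi_1^*+\pi_2^*$ and the cokernel of $(\pi_{1*},\pi_{2*})$ become Eisenstein, which forces $\Phi_n$ to be an isomorphism at the residual level $n=1$. Lifting to arbitrary $n$ uses the freeness from part (3) and Nakayama.

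To identify the source of \eqref{first-isom} with $S^B_k(N^+,\calO)_{/I^{[p]}_{f,n}}$, note that $X^{\ss}_{\FF^{\ac}_p}$ is a finite set of points on which $G_{\FF_{p^2}}$ acts trivially (since the geometric supersingular points and their CM structures are all defined over $\FF_{p^2}$), so the $G_{\FF_{p^2}}$-invariants are the full $\rmH^0$. The identification of this $\rmH^0$ with $S^B_k(N^+,\calO)$ follows from the Shimura set description \eqref{Shi-set} combined with the definition of $\calL_{k-2}(\calO)$ via $\rho_k$. The key compatibility is that under this identification the Hecke action on the target of $\Phi_n$ (via the Eichler-Shimura relation $\Frob_p^2-T_p\Frob_p+p^{k-1}=0$ and the twisted weight-spectral-sequence computation) sends $U_p$ to $\epsilon_p p^{(k-2)/2}$, matching the definition of $\phi^{[p]}_{f,n}$, so the quotient by $I_{f,n}$ on the target is the quotient by $I^{[p]}_{f,n}$ on the source.

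The main obstacle is establishing the bijectivity of $\Phi_n$ at $n=1$: one must show that the connecting map of the excision sequence, localized at $\frakm_f$, has no kernel and no cokernel. This is where Ihara's lemma \ref{DT} and the non-Eisenstein-ness of $\bar\rho_{f,\lambda}$ from $(\mathrm{CR}^\star)$ must be used carefully, since $\Phi_n$ is defined only after taking $G_{\FF_{p^2}}$-invariants on one side and $\rmH^1(\FF_{p^2},-)$ on the other, and the two vanishings (of $(\ker)_{\frakm_f}$ and $(\coker)_{\frakm_f}$) require both directions of Ihara's lemma. Once this is done at $n=1$, the freeness of part (3) upgrades it to general $n$.
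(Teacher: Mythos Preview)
Your overall plan is reasonable, but it diverges from the paper's proof in a way that leaves a genuine gap.

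\textbf{Logical order.} In the paper, part (1) is not proved by invoking abstract level-raising; it is \emph{derived} from the surjectivity of $\Phi_n$. The paper first builds a commutative diagram (using the monodromy filtration on $\rmH^{1}(\interX_{0}(p)\otimes\QQ^{\ac}_{p},\calL_{k-2}(\calO)(1))_{\frakm_{f}}$ and the pushforward $(\pi_{1*},\pi_{2*})$), shows via Ihara's lemma that the induced map $\Phi'$ onto $\coker(\nabla)\cong\rmH^{1}(\FF_{p^{2}},\rmH^{1}(\overline{X}_{\FF^{\ac}_{p}},\calL_{k-2}(\calO)(1))_{\frakm_f})$ is surjective, and then checks (using \cite{Illusie2}) that $\Phi'$ agrees with the excision connecting map $\Phi$. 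Composing with the projection $\rmT^{r}_{f,n}\to\calO_{n}$ then \emph{produces} the homomorphism $\phi^{[p]}_{f,n}$. Your sentence ``Hence the prescription \ldots\ is a well-defined ring homomorphism'' is not justified: $n$-admissibility is a necessary condition on the eigenvalues, but it does not by itself guarantee consistency with the relations in $\TT^{[p]}$. You do cite Diamond--Taylor afterwards, which would rescue (1) and (2), but you should not claim well-definedness before that.

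\textbf{The bijectivity of $\Phi_n$.} This is the real gap. You propose to show $\Phi_n$ is an isomorphism at $n=1$ by using ``both directions of Ihara's lemma'', i.e.\ the Eisenstein-ness of both $\ker(\pi_1^*+\pi_2^*)$ and $\coker(\pi_{1*},\pi_{2*})$. But you do not explain how the first vanishing would give injectivity of $\Phi_n$, and this implication is not at all transparent from the excision/spectral-sequence setup. The paper avoids this entirely: it proves only \emph{surjectivity} of $\Phi_n$ (via the $\coker$ direction of Ihara), and then uses the freeness from (3) to conclude that both source and target are free of rank one over $\calO_n$, whence a surjection is an isomorphism. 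Concretely, $\rmH^{1}(\overline{X}_{\FF^{\ac}_{p}},\calL_{k-2}(\calO)(1))_{/I_{f,n}}\cong\rmT^{r}_{f,n}$ with $\rmT_{f,n}|_{G_{\QQ_p}}\cong\calO_n(1)\oplus\calO_n$, so the target is $\calO_n^{r}$; the surjection from the rank-one source forces $r=1$. This rank-counting argument replaces any need for injectivity, and your Nakayama step for ``lifting to arbitrary $n$'' is unnecessary.

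\textbf{Two smaller points.} First, your claim that $G_{\FF_{p^{2}}}$ acts trivially on $\rmH^{0}(X^{\ss}_{\FF^{\ac}_{p}},\calL_{k-2}(\calO))$ is not correct as stated: the points are $\FF_{p^2}$-rational, but $\Frob_p$ acts nontrivially on the fibres of $\calL_{k-2}$; the paper invokes \cite[Proposition~3.8]{Ri-100} to identify this action with $p^{(2-k)/2}U_p$, so triviality only holds after passing to the quotient by $I^{[p]}_{f,n}$. Second, you omit the identification $\Phi=\Phi'$ (excision connecting map $=$ map induced by the weight-spectral-sequence diagram), which the paper treats carefully via \cite{Illusie2}; this is needed to transport the surjectivity from the diagram to the excision map.
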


\begin{remark}
We will refer to this theorem as the \emph{unramified arithmetic level raising for the Kuga-Sato varieties}.  It addresses a question raised in the introduction of \cite{CH-2} about the surjectivity of the Abel-Jacobi map restricted to the supersingular locus.  The proof is inspired by lectures of Liang Xiao at the Morning side center. 
\end{remark}

\begin{myproof}{Theorem}{\ref{level-raise-curve}} 
We first proceed to show that $\Phi_{n}$ is surjective. We consider the localized weight spectral sequence for $\rmH^{1}(\interX_{0}(p)\otimes{\QQ^{\ac}_{p}}, \calL_{k-2}(\calO)(1))_{\frakm_{f}}$ and its induced monodromy filtration:
\begin{equation*}
\begin{aligned}
&0\subset^{\rmE^{1,0}_{2,\frakm_{f}}} M_{1}\rmH^{1}(\interX_{0}(p)\otimes{\QQ^{\ac}},\calL_{k-2}(\calO)(1))_{\frakm_{f}}\subset^{\rmE^{0,1}_{2, \frakm_{f}}} M_{0}\rmH^{1}(\interX_{0}(p)\otimes{\QQ^{\ac}},\calL_{k-2}(\calO)(1))_{\frakm_{f}}\\
&\subset^{\rmE^{-1,2}_{2, \frakm_{f}}} M_{-1}\rmH^{1}(\interX_{0}(p)\otimes{\QQ^{\ac}},\calL_{k-2}(\calO)(1))_{\frakm_{f}}.\\
\end{aligned}
\end{equation*}
By \eqref{grad-curve},  we have 
\begin{equation*}
\begin{aligned}
\rmE^{1,0}_{2,\frakm_{f}}&=\ker[\rmH^{0}(\overline{X}_{0}(p)_{\FF^{\ac}_{p}} , a_{1*}\calL_{k-2}(\calO))\xrightarrow{\tau} \rmH^{2}(\overline{X}_{0}(p)_{\FF^{\ac}_{p}}, a_{0*}\calL_{k-2}(\calO)(1))]_{\frakm_{f}}\\
&=\rmH^{0}(X^{\ss}_{\FF^{\ac}_{p}}, \calL_{k-2}(\calO)(1))_{\frakm_{f}};\\
\rmE^{0,1}_{2, \frakm_{f}}&= \rmH^{1}(\overline{X}_{0}(p)_{\FF^{\ac}_{p}}, a_{0*}\calL_{k-2}(\calO)(1))_{\frakm_{f}}\\
&=\rmH^{1}(\overline{X}_{\FF^{\ac}_{p}},\calL_{k-2}(\calO)(1))^{\oplus 2}_{\frakm_{f}};\\
\rmE^{-1,2}_{2, \frakm_{f}}&= \coker[\rmH^{0}(\overline{X}_{0}(p)_{\FF^{\ac}_{p}}, a_{0*}\calL_{k-2}(\calO)(1))\xrightarrow{\rho} \rmH^{0}(\overline{X}_{0}(p)_{\FF^{\ac}_{p}}, a_{1*}\calL_{k-2}(\calO)(1))]_{\frakm_{f}}\\
&=\rmH^{0}(X^{\ss}_{\FF^{\ac}_{p}}, \calL_{k-2}(\calO))_{\frakm_{f}}.\\
\end{aligned}
\end{equation*}
Next we consider the push-forward map 
\begin{equation*}
\rmH^{1}(\interX_{0}(p)\otimes{\Qbar_{p}}, \calL_{k-2}(\calO)(1))_{\frakm_{f}}\xrightarrow{(\pi_{1*}, \pi_{2*})} \rmH^{1}(\interX\otimes{\Qbar_{p}},\calL_{k-2}(\calO)(1))_{\frakm_{f}}^{\oplus 2}.
\end{equation*}
This is surjective by ``Ihara's lemma" Theorem \ref{DT} and Nakayma's lemma. It is well-known that the composite 
\begin{equation*}
\rmE^{1,0}_{2,\frakm_{f}}\hookrightarrow \rmH^{1}(\interX_{0}(p)\otimes{\Qbar_{p}}, \calL_{k-2}(\calO)(1))_{\frakm_{f}}\xrightarrow{(\pi_{1*}, \pi_{2*})} \rmH^{1}(\interX\otimes{\Qbar_{p}}, \calL_{k-2}(\calO)(1))_{\frakm_{f}}^{\oplus 2}
\end{equation*}
is zero. Therefore we obtain the following commutative diagram where we have omitted the coefficient $\calL_{k-2}(\calO)$ in all the terms
\begin{equation*}
\begin{tikzcd}
\rmH^{1}(\interX\otimes{\FF^{\ac}_{p}})(1)_{\frakm_{f}}^{\oplus 2} \arrow[r, "({i_{1*}, i_{2*}})"] \arrow[d, "\cong"] & \rmH^{1}(\interX_{0}(p)\otimes{\Qbar_{p}})(1)_{\frakm_{f}} \arrow[r] \arrow[d, "({\pi_{1*}, \pi_{2*}})"] & \rmH^{0}(X^{\ss}_{\FF^{\ac}_{p}})_{\frakm_{f}}  \arrow[d, "\Phi^{\prime}"] \\
\rmH^{1}(\interX\otimes{\Qbar_{p}})(1)_{\frakm_{f}}^{\oplus 2} \arrow[r, "\nabla"]      & \rmH^{1}(\interX\otimes{\Qbar_{p}})(1)_{\frakm_{f}}^{\oplus2}\arrow[r]                & \coker(\nabla)             
\end{tikzcd}.
\end{equation*}
Here the top row of the diagram is the monodromy filtration of  $\rmH^{1}(\interX\otimes{\Qbar_{p}},\calL_{k-2}(\calO)(1))_{\frakm_{f}}$ which is exact on the right. 
The map $\Phi^{\prime}$ is the one naturally induced by $({\pi_{1*}, \pi_{2*}})$. The map $\nabla$ is by definition given by the composite of $({\pi_{1*}, \pi_{2*}})$ and $({i_{1*}, i_{2*}})$. By \eqref{inter-matrix}, the map $\nabla$ is given by the matrix
\begin{equation*}
\begin{pmatrix}
\mathrm{id} &   \Frob_{p}\\
\Frob_{p} & \mathrm{id}\\ 
\end{pmatrix}
\end{equation*}
since the central element $S_{p}$ has trivial action. It follows then that we have an isomorphism
\begin{equation*}
\coker(\nabla) = \rmH^{1}(\FF_{p^{2}}, \rmH^{1}(\interX\otimes{\QQ^{\ac}_{p}}, \calL_{k-2}(\calO)(1))_{\frakm_{f}}).
\end{equation*}
Since  $({\pi_{1*}, \pi_{2*}})$ is surjective, the map  $\Phi^{\prime}$  is surjective as well. Let $\Phi^{\prime}_{n}$ be the reduction of  $\Phi^{\prime}$  modulo $I_{f,n}$. Therefore we are left to show that $\Phi^{\prime}_{n}$ agree with the map $\Phi_{n}$. To show this, we rely on some results proved in \cite{Illusie2}. More precisely, the natural quotient map induced by the monodromy filtration 
\begin{equation*}
\rmH^{1}(\interX_{0}(p)\otimes{\Qbar_{p}}, \calL_{k-2}(\calO)(1))_{\frakm_{f}}\rightarrow \rmH^{0}(X^{\ss}_{\FF^{\ac}_{p}}, \calL_{k-2}(\calO))_{\frakm_{f}}
\end{equation*} 
factors through $\rmH^{1}(X^{\ord}_{\FF^{\ac}_{p}}, \calL_{k-2}(\calO))$:
\begin{equation*}
\rmH^{1}(\interX_{0}(p)\otimes{\Qbar_{p}}, \calL_{k-2}(\calO)(1))_{\frakm_{f}}\xrightarrow{i^{*}_{1}}\rmH^{1}(X^{\ord}_{\FF^{\ac}_{p}}, \calL_{k-2}(\calO)(1))_{\frakm_{f}} \rightarrow \rmH^{0}(X^{\ss}_{\FF^{\ac}_{p}}, \calL_{k-2}(\calO))_{\frakm_{f}} \rightarrow 0.
\end{equation*}
where the map $\rmH^{1}(X^{\ord}_{\FF^{\ac}_{p}}, \calL_{k-2}(\calO)(1))_{\frakm_{f}} \rightarrow \rmH^{0}(X^{\ss}_{\FF^{\ac}_{p}}, \calL_{k-2}(\calO))_{\frakm_{f}}$ comes from the natural excision exact sequence for  $\rmH^{1}(\interX\otimes{\FF^{\ac}_{p}}, \calO(1))_{\frakm_{f}}$ and the $i^{*}_{1}$ is the pullback of the cohomology of Neaby cycles
\begin{equation*}
\rmH^{1}(\overline{X}_{0}(p)_{\FF^{\ac}_{p}}, R\Psi(\calL_{k-2}(\calO))(1))_{\frakm_{f}}\xrightarrow{i^{*}_{1}}\rmH^{1}(X^{\ord}_{\FF^{\ac}_{p}},R\Psi(\calL_{k-2}(\calO))(1))_{\frakm_{f}}.
\end{equation*}
For the proof of these facts, see \cite[Proposition 1.5]{Illusie2} which extends to non-trivial coefficient. Let $x\in \rmH^{0}(X^{\ss}_{\FF^{\ac}_{p}}, \calL_{k-2}(\calO))_{\frakm_{f}}$ and let $\tilde{x}$ be a preimage of $x$ in $\rmH^{1}(X^{\ord}_{\FF^{\ac}_{p}}, \calL_{k-2}(\calO)(1))_{\frakm_{f}}$. Since $i^{*}_{1}i_{1*}$ is the identity map, we can  take $i_{1*}(\tilde{x})$ as a preimage of $\tilde{x}$ in 
\begin{equation*}
\rmH^{1}(\overline{X}_{0}(p)_{\FF^{\ac}_{p}}, R\Psi(\calL_{k-2}(\calO))(1))_{\frakm_{f}}. 
\end{equation*}
Therefore for $x\in \rmH^{0}(X^{\ss}_{\FF^{\ac}_{p}}, \calL_{k-2}(\calO))$, we have $\Phi^{\prime}(x)=(\pi_{1*}i_{1*}(\tilde{x}), \pi_{2*}i_{2*}(\tilde{x}))=(\tilde{x}, \Frob_{p}(\tilde{x}))$. Since the natural quotient map  
\begin{equation*}
\rmH^{1}(\interX\otimes{\Qbar_{p}}, \calL_{k-2}(\calO)(1))_{\frakm_{f}}\oplus \rmH^{1}(\interX\otimes{\Qbar_{p}}, \calL_{k-2}(\calO)(1))_{\frakm_{f}} \rightarrow \coker(\nabla)
\end{equation*}
is given by sending 
\begin{equation*}
(x, y)\in \rmH^{1}(\interX\otimes{\Qbar_{p}}, \calL_{k-2}(\calO)(1))_{\frakm_{f}}\oplus \rmH^{1}(\interX\otimes{\Qbar_{p}}, \calL_{k-2}(\calO)(1))_{\frakm_{f}}
\end{equation*}
to $(x-\Frob_{p}(y))$ in light of the definition of $\nabla$, we have 
$\Phi^{\prime}(x)=(1-\Frob^{2}_{p})\tilde{x}$. But this is precisely the definition of  $\Phi(x)$. Note we have an isomorphism 
\begin{equation*}
\rmT^{r}_{f, n}\cong  \rmH^{1}(\interX\otimes{\QQ^{\ac}}, \calL_{k-2}(\calO)(1))_{/I_{f, n}}
\end{equation*}
for some positive integer $r$.  By Definition \ref{adm} (3), we have $\rmT_{f, n\mid G_{\QQ_{p}}}\cong \calO_{n}(1)\oplus \calO_{n}$. Then it follows that
\begin{equation*}
\rmH^{1}(\FF_{p^{2}}, \rmH^{1}(\overline{X}_{\FF^{\ac}_{p}}, \calL_{k-2}(\calO)(1))_{/I_{f, n}})\cong\rmH^{1}(\FF_{p^{2}}, \calO_{n}(1)\oplus\calO_{n})^{r}.
\end{equation*}
Therefore $\Frob_{p}$ acts by $\epsilon_{p}$ on $\rmH^{1}(\FF_{p^{2}}, \rmH^{1}(\overline{X}_{\FF^{\ac}_{p}}, \calL_{k-2}(\calO)(1))_{/I_{f, n}})$. By \cite[Proposition 3.8]{Ri-100}, we know that $\Frob_{p}$ acts by $U_{p}$ on $X^{\ss}$ and thus by $p^{\frac{2-k}{2}}U_{p}$ on $\rmH^{0}(X^{\ss}, \calL_{k-2}(\calO))$. From the above discussion, we conclude that we have a surjective morphism
\begin{equation*}
\begin{aligned}
\Phi_{n}: S^{B}_{k}(N^{+}, \calO) &\twoheadrightarrow  \rmH^{1}(\FF_{p^{2}}, \rmH^{1}(\overline{X}_{\FF^{\ac}_{p}}, \calL_{k-2}(\calO)(1))_{/I_{f, n}})\\
& \cong \rmH^{1}(\FF_{p^{2}}, \calO_{n}(1)\oplus\calO_{n})^{r}\\
& \twoheadrightarrow\calO_{n}.\\
\end{aligned}
\end{equation*}
This gives us the desired morphism $\phi^{[p]}_{f, n}: \TT^{[p]}\rightarrow \calO_{n}$ which finishes the proof of $(1)$.

The statement in $(2)$ follows from the main results of \cite[Theorem 2]{DT1} with trivial modification to cover the higher weight case. 

The statement in $(3)$ follows from \cite[Proposition 6.8]{CH-1} and the slight modification in \cite[Proposition 5.9]{Chida} which replaces the ordinary local condition at $l$ by the more general local conditions given by Fontaine-Laffaille theory. It follows then $S^{B}_{k}(N^{+}, \calO)/I^{[p]}_{f, n}$ is of rank $1$ over $\calO_{n}$. Since we have a surjective map $\Phi_{n}: S^{B}_{k}(N^{+}, \calO) \twoheadrightarrow  \rmH^{1}(\FF_{p^{2}}, \rmH^{1}(\overline{X}_{\FF^{\ac}_{p}}, \calL_{k-2}(\calO)(1))_{/I_{f, n}})$, the rank of $\rmH^{1}(\overline{X}_{\FF^{\ac}_{p}}, \calL_{k-2}(\calO)(1))_{/I_{f, n}}$ has to be $2$. 

The statement in $(4)$ follows from the previous discussions. More precisely, by $(3)$, the module $S^{B}_{k}(N^{+}, \calO)_{/I^{[p]}_{f, n}}$ is free of rank one over $\calO_{n}$ and we have a surjective map $\Phi_{n}$. This concludes the proof of this theorem. 
\end{myproof}

\subsection{Ramified level raising on Kuga-Sato varieties} Let $B^{\prime\prime}$ be the indefinite quaternion algebra with discriminant $pp^{\prime}N^{-}$. Let $\calO_{B^{\prime\prime}, N^{+}}$ be an Eichler order of level $N^{+}$ contained in a fixed maximal order $\calO_{B^{\prime\prime}}$. Then we define the Shimura curve $X^{\prime\prime}=X^{B^{\prime\prime}}_{N^{+}, pp^{\prime}N^{-}}$ the same way as we define $X=X^{B^{\prime}}_{N^{+}, N^{-}}$. Then we define an integral model $\interX^{\prime\prime}$ of $X^{\prime\prime}$ over $\ZZ_{p^{\prime}}$. For a $\ZZ_{p^{\prime}}$-scheme $S$, $\interX^{\prime\prime}$ is the set of triples $(A, \iota, C)$ where
\begin{enumerate}
\item $A$ is an $S$-abelian scheme of relative dimension $2$;
\item $\iota: \calO_{B^{\prime}}\hookrightarrow \End_{S}(A)$ is an embedding which is special in the sense of \cite[131-132]{BC-unifor};
\item $C$ is an $\calO_{B^{\prime\prime}}$-stable locally cyclic subgroup of $A[N^{+}]$ of order $N^{+2}$.
\end{enumerate}
This moduli problem is coarsely represented by a projective scheme $\interX^{\prime\prime}$ of relative dimension $1$ over $\ZZ_{p^{\prime}}$. We can similarly rigidify the moduli problem by adding a full level-$d$ structure for a positive integer $d\nmid pp^{\prime}N$ as we did in \eqref{d-full-level}. Then we will write the resulting moduli problem by $\interX^{\prime\prime}_{d}$. The formal completion of $\interX^{\prime\prime}_{d}$ along the special fiber at $p^{\prime}$ admits the Cerednick-Drinfeld uniformization after base change to $\ZZ_{p^{\prime2}}$. The Cerednick-Drinfeld uniformization theorem asserts that the ${\interX}^{\prime\prime\wedge}_{d}$ can be uniformized by the formal scheme $\calM$ which is a disjoint union of the Drinfeld upper-half planes:
\begin{equation}\label{p-unifor}
{\interX}^{\prime\prime\wedge}_{d} \xrightarrow{\sim} G(\QQ)\backslash \calM \times G(\mathbf{A}^{(\infty, p^{\prime})})/K^{p^{\prime}}_{d}. 
\end{equation} 
Here $K$ is the open compact subgroup given by the Eichler order $\calO_{B, N^{+}}$ and $K_{d}$ is given by
\begin{equation*}
K_{d}=\{g=(g_{v})_{v}\in K: g_{v}\equiv \begin{pmatrix}1&0\\0& 1\end{pmatrix}\mod v \text{ for all } v\mid d\}. 
\end{equation*}
Let $\overline{X}^{\prime\prime}_{d}$ be the special fiber of $\interX^{\prime\prime}_{d}$. Then we have the following proposition.
\begin{proposition}\label{curve-red}
We have the following descriptions of the scheme $\overline{X}^{\prime\prime}_{d}$.
\begin{enumerate}
\item The scheme $\overline{X}^{\prime\prime}_{d}$ is a union $\PP^{1}$-bundles over Shimura sets 
\begin{equation*}
\overline{X}^{\prime\prime}_{d}= \PP^{1}(X^{B}_{+})\cup \PP^{1}(X^{B}_{-}). 
\end{equation*}
where both $X^{B}_{+}$ and $X^{B}_{-}$ are isomorphic to the Shimura set $X^{B}_{d}$ as in \eqref{Shi-set}. 
\item The intersection points of the two $\PP^{1}$-bundles $\PP^{1}(X^{B}_{+})$ and $\PP^{1}(X^{B}_{-})$ are given by
\begin{equation*}
\PP^{1}(X^{B}_{+})\cap \PP^{1}(X^{B}_{-})=X^{B}_{d}(p).
\end{equation*}
This also can be identified with the set of singular points on $\overline{X}^{\prime\prime}_{d}$.  A similar statements hold for the curve $\overline{X}^{\prime\prime}$ replacing $X^{B}_{d}$ by $X^{B}$ and $X^{B}_{d}(p)$ by $X^{B}_{0}(p)$. 
\end{enumerate}
\end{proposition}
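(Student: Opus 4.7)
The proof plan is to apply the Cerednick-Drinfeld $p'$-adic uniformization \eqref{p-unifor} and translate the standard description of the special fiber of the Drinfeld upper half plane into the statement about $\overline{X}^{\prime\prime}_d$. First I would recall that, after base change to $\ZZ_{p'^2}$, the formal scheme $\calM$ decomposes as $\widehat{\Omega} \times \ZZ$, where $\widehat{\Omega}$ is the Deligne formal model of the Drinfeld upper half plane over $\Spf(\ZZ_{p'})$. The special fiber of $\widehat{\Omega}$ is a union of projective lines $\PP^{1}_{\FF^{\ac}_{p'}}$, one for each vertex of the Bruhat-Tits tree $\mathcal{T}$ of $\PGL_{2}(\QQ_{p'})$, with two components meeting transversally at a single $\FF^{\ac}_{p'}$-point whenever the corresponding vertices are adjacent in $\mathcal{T}$. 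In particular, the singular locus of the special fiber of $\widehat{\Omega}$ is in bijection with the set of edges of $\mathcal{T}$.

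Next I would analyze the action of $G(\QQ) = B^{\times}(\QQ)$ on $\calM \times G(\Adel^{(\infty, p')})/K_{d}^{p'}$, where $B$ is the definite quaternion algebra over $\QQ$ of discriminant $pN^-$ produced by Cerednick-Drinfeld (switching ramification between $p'$ and $\infty$ in $B^{\prime\prime}$). The tree $\mathcal{T}$ is bipartite, with the two classes of vertices distinguished by the parity of their distance from a fixed base vertex; the $\ZZ$-factor in $\calM$ keeps track of this parity via the valuation of the reduced norm. Combining bipartiteness with the quotient by $B^{\times}(\QQ)$ yields a decomposition of the set of irreducible components of $\overline{X}^{\prime\prime}_d$ into two pieces, which by construction give the claimed $\PP^{1}$-bundles $\PP^{1}(X^{B}_{+})$ and $\PP^{1}(X^{B}_{-})$, proving (1) up to identification of the indexing sets.

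To identify the base Shimura sets, I would note that the stabilizer in $\GL_{2}(\QQ_{p'}) = B^{\times}(\QQ_{p'})$ of a vertex of $\mathcal{T}$ is (modulo center) the maximal compact subgroup $\GL_{2}(\ZZ_{p'}) = \calO_{B, p'}^{\times}$. Taking the $B^{\times}(\QQ)$-quotient of the orbit of vertices, by strong approximation for $B^{\times}$ one obtains
\begin{equation*}
X^{B}_{\pm} \;\cong\; B^{\times}(\QQ) \backslash B^{\times}(\Adel^{(\infty)}) / K_{d}^{p'} \cdot \calO_{B, p'}^{\times},
\end{equation*}
which is precisely the Shimura set $X^{B}_d$ of \eqref{Shi-set}. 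Analogously, the stabilizer of an edge of $\mathcal{T}$ is (up to center) the Iwahori subgroup of $\GL_{2}(\QQ_{p'})$, so the quotient of the set of edges is the Shimura set of the same definite quaternion algebra with Iwahori level added at $p'$, which one identifies with $X^{B}_d(p)$; this, combined with the transversality of the local crossings of $\widehat{\Omega}$, gives part (2).

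The main technical nuisance will not be any single step but the adelic bookkeeping that matches the abstract double cosets produced by C-D to the specific Shimura sets $X^{B}_d$ and $X^{B}_d(p)$ defined earlier, including checking that the two ``halves'' of the bipartition both canonically identify with the \emph{same} set $X^{B}_d$ (this requires choosing a uniformizer at $p'$ to switch between even and odd vertex classes) and that the edge quotient really encodes Iwahori-level data. Once this bookkeeping is set up, the statement about the Shimura curve $\overline{X}^{\prime\prime}$ (without the auxiliary level $d$) follows by taking the further quotient by the finite group $G_d$, exactly as in the passage from $\interX_d$ to $\interX$ in \S 2.1. No new geometric input beyond the Cerednick-Drinfeld theorem and the classical structure of $\widehat{\Omega}$ is needed.
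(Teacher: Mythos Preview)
Your proposal is correct and follows the standard route via Cerednik--Drinfeld uniformization and the combinatorics of the Bruhat--Tits tree of $\PGL_{2}(\QQ_{p'})$. The paper itself does not give an argument: its proof reads in full ``This is well-known. See \cite[Proposition 3.2]{Wang} for an exposition of this result.'' Your outline is essentially what such an exposition contains, so there is nothing to compare beyond noting that you have supplied the details the paper omits.
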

\begin{proof}
This is well-known. See \cite[Proposition 3.2]{Wang} for an exposition of this result.
\end{proof}
Let $\pi^{\prime\prime}_{k, d}: \calW^{\prime\prime}_{k, d}\rightarrow \interX^{\prime\prime}_{d}$ be the Kuga-Sato variety defined similarly as in \eqref{KS-var}. Then there is a semi-stable model $\tilde{\calW}^{\prime\prime}_{k, d}$ constructed in \cite[Lemma 4]{Saito2} and the action of the idempotent $\epsilon_{k}$ extends naturally to the semi-stable model  $\widetilde{\calW}^{\prime\prime}_{k, d}$. Moreover the first page of the weight spectral sequence converges to $\epsilon_{k}\rmH^{k-1}(\widetilde{\calW}^{\prime\prime}_{k, d}\otimes \QQ_{p^{\prime}}^{\ac}, \Lambda)=\rmH^{1}(\interX^{\prime\prime}_{d}\otimes \QQ_{p^{\prime}}^{\ac}, \calL_{k-2}(\Lambda))$ takes the following form by \cite[page 37]{Saito2}. Let $\TT^{[pp^{\prime}]}$ be the $l$-adic completion of the integral Hecke algebra that acts faithfully on the subspace of $S_{k}(pp^{\prime}N)$ consisting of forms that are new at $pp^{\prime}N^{-}$.
\begin{center}
\begin{tikzpicture}[thick,scale=0.9, every node/.style={scale=0.9}]
  \matrix (m) [matrix of math nodes,
    nodes in empty cells,nodes={minimum width=5ex,
    minimum height=5ex,outer sep=-5pt},
    column sep=1ex,row sep=1ex]{
                &      &     &     & \\
          2     &  \rmH^{0}(\mathfrak{X}^{\prime\prime}_{d}\otimes \FF^{\ac}_{p^{\prime}}, a_{1*}\calL_{k-2}(\Lambda)(-1)) &  \rmH^{2}(\mathfrak{X}^{\prime\prime}\otimes \FF^{\ac}_{p^{\prime}}, a_{0*}\calL_{k-2}(\Lambda))  & & \\
          1     &       & \rmH^{1}(\mathfrak{X}^{\prime\prime}\otimes \FF^{\ac}_{p^{\prime}}, a_{0*}\calL_{k-2}(\Lambda)) &    & \\
          0     &    & \rmH^{0}(\mathfrak{X}^{\prime\prime}\otimes \FF^{\ac}_{p^{\prime}}, a_{0*}\calL_{k-2}(\Lambda)) &  \rmH^{0}(\mathfrak{X}^{\prime\prime}_{d}\otimes \FF^{\ac}_{p^{\prime}}, a_{1*}\calL_{k-2}(\Lambda)) &\\
    \quad\strut &   -1  &  0  &  1  & \strut \\};
\draw[thick] (m-1-1.east) -- (m-5-1.east) ;
\draw[thick] (m-5-1.north) -- (m-5-5.north) ;
\end{tikzpicture}
\end{center}
This means that the weight spectral sequence of the Kuga-Sato variety agree with the weight spectral sequence of the base curve with certain non-trivial coefficient. By further applying the projector $\epsilon_{d}$, we obtain the following first page of the weight spectral sequence converging to 
\begin{equation}\label{Xprimeprime}
\rmH^{1}( \interX^{\prime\prime}\otimes \QQ^{\ac}_{p^{\prime}}, \calL_{k-2}(\Lambda)).
\end{equation}
\begin{center}
\begin{tikzpicture}[thick,scale=0.9, every node/.style={scale=0.9}]
  \matrix (m) [matrix of math nodes,
    nodes in empty cells,nodes={minimum width=5ex,
    minimum height=5ex,outer sep=-5pt},
    column sep=1ex,row sep=1ex]{
                &      &     &     & \\
          2     &  \rmH^{0}(\mathfrak{X}^{\prime\prime}\otimes \FF^{\ac}_{p^{\prime}}, a_{1*}\calL_{k-2}(\Lambda)(-1)) &  \rmH^{2}(\mathfrak{X}^{\prime\prime}\otimes \FF^{\ac}_{p^{\prime}}, a_{0*}\calL_{k-2}(\Lambda))  & & \\
          1     &       & \rmH^{1}(\mathfrak{X}^{\prime\prime}\otimes \FF^{\ac}_{p^{\prime}}, a_{0*}\calL_{k-2}(\Lambda)) &    & \\
          0     &    & \rmH^{0}(\mathfrak{X}^{\prime\prime}\otimes \FF^{\ac}_{p^{\prime}}, a_{0*}\calL_{k-2}(\Lambda)) &  \rmH^{0}(\mathfrak{X}^{\prime\prime}\otimes \FF^{\ac}_{p^{\prime}}, a_{1*}\calL_{k-2}(\Lambda)) &\\
    \quad\strut &   -1  &  0  &  1  & \strut \\};
\draw[thick] (m-1-1.east) -- (m-5-1.east) ;
\draw[thick] (m-5-1.north) -- (m-5-5.north) ;
\end{tikzpicture}
\end{center}
Note that we can make it explicit for the terms in the above spectral sequence
\begin{enumerate}
\item $ \rmH^{0}(\mathfrak{X}^{\prime\prime}\otimes \FF^{\ac}_{p^{\prime}}, a_{1*}\calL_{k-2}(\Lambda))=\rmH^{0}(X^{B}_{0}(p)_{\FF^{\ac}_{p^{\prime}}}, \calL_{k-2}(\Lambda))$;
\item  $\rmH^{1}(\mathfrak{X}^{\prime\prime}\otimes \FF^{\ac}_{p^{\prime}}, a_{0*}\calL_{k-2}(\Lambda))=0$;
\item  $\rmH^{0}(\mathfrak{X}^{\prime\prime}\otimes \FF^{\ac}_{p^{\prime}}, a_{0*}\calL_{k-2}(\Lambda))=\rmH^{0}(X^{B}_{\FF^{\ac}_{p^{\prime}}}, \calL_{k-2}(\Lambda))^{\oplus 2}$.
\end{enumerate}

\begin{theorem}\label{level-raise-curve-ram}
Let $(p, p^{\prime})$ be a pair of $n$-admissible primes for $f$. We assume that the residual Galois representation $\bar{\rho}_{f, \lambda}$ satisfy the assumption $(\mathrm{CR}^{\star})$. Then we have the following statements.
\begin{enumerate}
\item There exists a surjective homomorphism $\phi^{[pp^{\prime}]}_{f, n}: \TT^{[pp^{\prime}]}\rightarrow \calO_{n}$ such that $\phi^{[pp^{\prime}]}_{f, n}$ agrees with $\phi_{f, n}$ at all Hecke operators away from $p$ and sends $(U_{p}, U_{p^{\prime}})$ to $(\epsilon_{p}p^{\frac{k-2}{2}}, \epsilon_{p^{\prime}}p^{\prime\frac{k-2}{2}})$. We will denote by $I^{[pp^{\prime}]}_{f, n}$ the kernel of $\phi^{[pp^{\prime}]}_{f, n}$. 
\item We have an isomorphism of $\calO_{n}$-modules of rank $1$
\begin{equation*}
\Xi_{n}: S^{B}_{k}(N^{+}, \calO){/I^{[p]}_{f, n}}\xrightarrow{\cong}\rmH^{1}_{\sing}(\QQ_{p^{\prime2}}, \rmH^{1}(\interX^{\prime\prime}\otimes{\QQ_{p^{\prime}}^{\ac}}, \calL_{k-2}(\calO)(1)){/I^{[pp^{\prime}]}_{f, n}}).
\end{equation*}
\end{enumerate}
\end{theorem}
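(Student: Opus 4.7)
The plan is to combine iterated applications of Theorem \ref{level-raise-curve} with an analysis of the weight spectral sequence for $\interX^{\prime\prime}$ at $p^{\prime}$ displayed above.

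For part (1), I would apply Theorem \ref{level-raise-curve} twice. The first application at $p$ produces the homomorphism $\phi^{[p]}_{f,n}\colon\TT^{[p]}\to\calO_n$ together with a generator $f^{[p]}_n \in S^{B}_k(N^+,\calO)[I^{[p]}_{f,n}]$ on the definite quaternion algebra $B$ of discriminant $pN^-$. Since the $n$-admissibility conditions for $p^{\prime}$ involve only Hecke eigenvalues coprime to the level, $p^{\prime}$ is also $n$-admissible for $f^{[p]}$. A second application of arithmetic level raising at $p^{\prime}$, now starting from $f^{[p]}$ on $B$ in place of $f$, yields the surjective morphism $\phi^{[pp^{\prime}]}_{f,n}\colon\TT^{[pp^{\prime}]}\to\calO_n$ with the desired $U_p$ and $U_{p^{\prime}}$ eigenvalues.

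For part (2), the main tool is the weight spectral sequence converging to $\rmH^1(\interX^{\prime\prime}\otimes\Qbar_{p^{\prime}},\calL_{k-2}(\calO)(1))$ shown above. I would first localize at the maximal ideal $\frakm^{[pp^{\prime}]}_f$ and then reduce modulo $I^{[pp^{\prime}]}_{f,n}$. Since $\rmH^1$ on the components vanishes and the $\rmH^0$ contributions from the components are Eisenstein, the localized monodromy filtration has nontrivial graded pieces only at $Gr^M_{-1}$ and $Gr^M_1$, both described by formulas \eqref{grad-curve} in terms of $\rmH^0(X^B_0(p)_{\FF^{\ac}_{p^{\prime}}},\calL_{k-2}(\calO))$. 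Applying Lemma \ref{fin-sing-lemma} together with the calculation \eqref{1-sing}, one obtains an identification of
\[
\rmH^1_{\sing}\bigl(\QQ_{p^{\prime 2}},\rmH^1(\interX^{\prime\prime}\otimes\Qbar_{p^{\prime}},\calL_{k-2}(\calO)(1))/I^{[pp^{\prime}]}_{f,n}\bigr)
\]
with the $G_{\FF_{p^{\prime 2}}}$-invariants of $\coker(\rho)/I^{[pp^{\prime}]}_{f,n}$, where $\rho$ is the restriction map from the components to the singular locus. Since $U_{p^{\prime}}$ reduces to $\epsilon_{p^{\prime}}(p^{\prime})^{(k-2)/2}$ modulo $I^{[pp^{\prime}]}_{f,n}$ and $l\nmid(p^{\prime})^2-1$ by admissibility, a direct Frobenius computation (mirroring the argument in the proof of Theorem \ref{level-raise-curve}) shows that taking $G_{\FF_{p^{\prime 2}}}$-invariants amounts to taking the full cokernel.

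The final step is to identify this cokernel modulo $I^{[pp^{\prime}]}_{f,n}$ with $S^B_k(N^+,\calO)/I^{[p]}_{f,n}$. Under the Jacquet-Langlands dictionary, $\rmH^0(X^B_0(p),\calL_{k-2}(\calO))$ corresponds to $S^B_k(N^+p,\calO)$, and $\rho$ becomes the sum of the two degeneracy maps $S^B_k(N^+,\calO)^{\oplus 2}\to S^B_k(N^+p,\calO)$. On the image of these degeneracy maps, $U_{p^{\prime}}$ satisfies the old-form quadratic relation $X^2 - a_{p^{\prime}}X + (p^{\prime})^{k-1}$; combined with the admissibility congruence $\varpi^n \mid (p^{\prime})^{k/2} + (p^{\prime})^{(k-2)/2} - \epsilon_{p^{\prime}} a_{p^{\prime}}$ and the freeness of $S^B_k(N^+,\calO)_{\frakm^{[p]}_f}$ over $\TT^{[p]}_{\frakm^{[p]}_f}$ given by Theorem \ref{level-raise-curve}(3), this forces the cokernel modulo $I^{[pp^{\prime}]}_{f,n}$ to be the rank-one $\calO_n$-module $S^B_k(N^+,\calO)/I^{[p]}_{f,n}$, producing $\Xi_n$. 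The main obstacle is precisely this last identification, which is a ramified analog of the Ihara-type analysis used in Theorem \ref{level-raise-curve} and requires careful bookkeeping of the interplay between the two degeneracy maps and the $U_{p^{\prime}}$-congruence in $I^{[pp^{\prime}]}_{f,n}$.
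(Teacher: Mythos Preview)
Your overall architecture is right—the weight spectral sequence for $\interX^{\prime\prime}$ at $p^{\prime}$ together with \eqref{1-sing} is exactly how the paper proceeds—but there is a genuine gap in the middle of your argument for part (2).

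You assert that ``the $\rmH^{0}$ contributions from the components are Eisenstein.'' This is the crucial place where the ramified situation differs from Theorem \ref{level-raise-curve}. In the unramified case the irreducible components of the special fibre are copies of the Shimura curve $\overline{X}$, and indeed $\rmH^{0}(\overline{X},\calL_{k-2})$ and $\rmH^{2}(\overline{X},\calL_{k-2})$ are Eisenstein. Here, by Proposition \ref{curve-red}, the components of $\overline{X}^{\prime\prime}$ are $\PP^{1}$-bundles over the zero–dimensional Shimura set $X^{B}$, and one has
\[
\rmH^{0}(\overline{X}^{\prime\prime}_{\FF^{\ac}_{p^{\prime}}},a_{0*}\calL_{k-2}(\calO))\cong S^{B}_{k}(N^{+},\calO)^{\oplus 2},\qquad
\rmH^{2}(\overline{X}^{\prime\prime}_{\FF^{\ac}_{p^{\prime}}},a_{0*}\calL_{k-2}(\calO)(1))\cong S^{B}_{k}(N^{+},\calO)^{\oplus 2},
\]
neither of which dies after localising at $\frakm^{[p]}_{f}$. (You implicitly acknowledge this in your final paragraph when you describe $\rho$ as the degeneracy map with source $S^{B}_{k}(N^{+},\calO)^{\oplus 2}$, so your proposal is internally inconsistent on this point.)

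Because $\rmH^{2}(a_{0*})$ is non-Eisenstein, formula \eqref{1-sing} does \emph{not} give $\rmH^{1}_{\sing}\cong\coker(\rho)^{G_{\FF_{p^{\prime 2}}}}$ as you claim: one must further quotient by $N\ker(\tau)$, or equivalently pass via the Gysin map $\tau$ to $\coker(\tau\circ\rho)$. This is exactly what the paper does. The composite $\tau\circ\rho$ is a self-map of $S^{B}_{k}(N^{+},\calO)^{\oplus 2}$ given by the explicit intersection matrix
\[
\nabla=\begin{pmatrix}-p^{\prime\frac{k-2}{2}}(p^{\prime}+1)&T_{p^{\prime}}\\ T_{p^{\prime}}&-p^{\prime\frac{k-2}{2}}(p^{\prime}+1)\end{pmatrix},
\]
and the paper analyses $\coker(\nabla)$ modulo $I^{[p]}_{f,n}$ by exhibiting an automorphism $\delta$ with $\nabla\circ\delta=p^{\prime-(k-2)}U^{2}_{p^{\prime}}-1$; the $n$-admissibility of $p^{\prime}$ then forces this cokernel to be rank one and simultaneously produces $\phi^{[pp^{\prime}]}_{f,n}$ (so part (1) falls out of the same computation rather than requiring a separate iteration of Theorem \ref{level-raise-curve}). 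Your ``old-form quadratic relation'' idea is morally the same congruence that makes $\nabla$ degenerate, but it is applied to the wrong module: working inside $S^{B}_{k}(N^{+},\calO)^{\oplus 2}$ via $\nabla$ avoids having to control the integral Hecke-module structure of $S^{B}_{k}(p^{\prime}N^{+},\calO)$, which your sketch does not address.
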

\begin{proof}
Following the formulas as proved in \eqref{1-sing}, we have that 
\begin{equation}
\rmH^{1}_{\sing}(\QQ_{p^{\prime2}}, \rmH^{1}(\interX^{\prime\prime}\otimes{\QQ_{p^{\prime}}^{\ac}}, \calL_{k-2}(\calO)(1)))
\end{equation}
is isomorphic to 
\begin{equation}\label{isom-sing}
\coker[\rmH^{0}(X^{\prime\prime}_{\FF^{\ac}_{p^{\prime}}}, a_{0*}\calL_{k-2}(\calO))\xrightarrow{\rho} \rmH^{0}(X^{\prime\prime}_{\FF^{\ac}_{p^{\prime}}}, a_{1*}\calL_{k-2}(\calO))\xrightarrow{\tau}  \rmH^{2}(X^{\prime\prime}_{\FF^{\ac}_{p^{\prime}}}, a_{0*}\calL_{k-2}(\calO(1)))]^{G_{\FF_{p^{\prime2}}}}.
\end{equation}
Here we have 
\begin{equation*}
\rmH^{0}(X^{\prime\prime}_{\FF^{\ac}_{p^{\prime}}}, a_{0*}\calL_{k-2}(\calO))=\rmH^{0}(X^{B}_{\FF^{\ac}_{p^{\prime}}}, \calL_{k-2}(\calO))^{\oplus2}
\end{equation*}
and we can identify it with the space
$S^{B}_{k}(N^{+},\calO)^{\oplus 2}$. Similarly under the Poincare duality, we can also identify  
\begin{equation*}
\rmH^{2}(X^{\prime\prime}_{\FF^{\ac}_{p^{\prime}}}, a_{0*}\calL_{k-2}(\calO)(1))=\rmH^{2}(\PP^{1}(X^{B}_{\FF^{\ac}_{p^{\prime}}}), \calL_{k-2}(\calO)(1))^{\oplus2}
\end{equation*}
with $S^{B}_{k}(N^{+}, \calO)^{\oplus 2}$. The space  $\rmH^{0}(X^{\prime\prime}_{\FF^{\ac}_{p^{\prime}}}, a_{1*}\calL_{k-2}(\calO))$ can be identified with $S^{B}_{k}(pN^{+},\calO)$. Under these identifications, the composition 
\begin{equation*}
\rmH^{0}(X^{\prime\prime}_{\FF^{\ac}_{p^{\prime}}}, a_{0*}\calL_{k-2}(\calO))\xrightarrow{\rho} \rmH^{0}(X^{\prime\prime}_{\FF^{\ac}_{p^{\prime}}}, a_{1*}\calL_{k-2}(\calO))\xrightarrow{\tau}  \rmH^{2}(X^{\prime\prime}_{\FF^{\ac}_{p^{\prime}}}, a_{0*}\calL_{k-2}(\calO(1)))
\end{equation*}
is given by the \emph{intersection matrix}
\begin{equation*}
\begin{pmatrix}
-p^{\prime\frac{k-2}{2}}(p^{\prime}+1) &T_{p^{\prime}}\\
T_{p^{\prime}} &-p^{\prime\frac{k-2}{2}}(p^{\prime}+1)\\
\end{pmatrix}
\end{equation*}
which we will also denote by $\nabla$. Since $p, p^{\prime}$ are $n$-admissible for $f$, the module
\begin{equation*}
\coker[S^{B}_{k}(N^{+},\calO)^{\oplus 2}_{/I^{[p]}_{f, n}}\xrightarrow{\nabla}S^{B}_{k}(N^{+},\calO)^{\oplus 2}_{/I^{[p]}_{f, n})}]
\end{equation*}
is of rank one over $\calO_{n}$ and is isomorphic to $S^{B}_{k}(N^{+},\calO)_{/I^{[p]}_{f, n}}$. Note here the isomorphism between 
\begin{equation*}
\coker[S^{B}_{k}(N^{+},\calO)^{\oplus 2}_{/I^{[p]}_{f, n}}\xrightarrow{\nabla}S^{B}_{k}(N^{+},\calO)^{\oplus 2}_{/I^{[p]}_{f, n})}]
\end{equation*}
and $S^{B}_{k}(N^{+},\calO)^{\oplus 2}_{/I^{[p]}_{f, n}}$ is induced by the map $(x, y)\mapsto \frac{1}{2}(x+\epsilon_{p^{\prime}}y)$ for $(x, y)\in S^{B}_{2}(N^{+},\calO)^{\oplus 2}_{/I^{[p]}_{f, n}}$.
By \cite[Theorem 5.8]{BD-Main} and \cite[\S3.5]{CH-2} adapted to the higher weight case, the natural $U_{p}$-action on 
\begin{equation*}
\rmH^{2}(X^{\prime\prime}_{\FF^{\ac}_{p^{\prime}}}, a_{0*}\calL_{k-2}(\calO)(1))\cong S^{B}_{2}(N^{+},\calO)^{\oplus 2}
\end{equation*}
is given by $(x, y)\mapsto (-p^{\prime\frac{k}{2}}y, p^{\prime\frac{k-2}{2}}x+T_{p^{\prime}}y)$.
We consider the automorphism 
\begin{equation*}
\delta: S^{B}_{2}(N^{+},\calO)^{\oplus 2} \rightarrow S^{B}_{2}(N^{+},\calO)^{\oplus 2}
\end{equation*}
given by 
$(x, y)\mapsto ( p^{\prime\frac{k-2}{2}}x+T_{p^{\prime}}y, p^{\prime\frac{k-2}{2}}y)$.
Then a quick calculation gives us that $\nabla\circ \delta=p^{\prime-(k-2)}U^{2}_{p^{\prime}}-1$.  This means that the quotient 
\begin{equation*}
\frac{S^{B}_{k}(N^{+},\calO)^{\oplus 2}}{(I^{[p]}_{f, n}, U^{2}_{p^{\prime}}-p^{\prime k-2})} \cong \coker[S^{B}_{k}(N^{+},\calO)^{\oplus 2}_{/I^{[p]}_{f, n}}\xrightarrow{\nabla}S^{B}_{k}(N^{+},\calO)^{\oplus 2}_{/I^{[p]}_{f, n})}]\end{equation*}
is of rank $1$. Since $p^{\prime}$ is $n$-admissible for $f$, we see immediately $U_{p^{\prime}}+\epsilon_{p^{\prime}}p^{\prime\frac{k-2}{2}}$ is invertible on $S^{B}_{k}(N^{+},\calO)^{\oplus 2}_{/I^{[p^{\prime}]}_{f, n}}$. Therefore we have
\begin{equation*}
\frac{S^{B}_{2}(N^{+},\calO)^{\oplus 2}}{(I^{[p]}_{f, n},U^{2}_{p^{\prime}}-p^{\prime k-2})} \cong \frac{S^{B}_{2}(N^{+},\calO)^{\oplus 2}}{(I^{[p]}_{f, n},U_{p^{\prime}}-\epsilon_{p^{\prime}}p^{\prime \frac{k-2}{2}})}
\end{equation*}
and the latter quotient is of rank $1$ over $\calO_{n}$. Then the action of $\TT^{[pp^{\prime}]}$ on this rank $1$ quotient gives the desired morphism 
$\phi^{[pp^{\prime}]}_{f, n}: \TT^{[pp^{\prime}]}\rightarrow \calO_{n}$.  This finishes the proof of the part $(1)$. Part $(2)$ follows directly from part $(1)$ using the isomorphism \eqref{isom-sing}
\end{proof}
\begin{remark}
We remark that a similar ramified arithmetic level raising theorem is first proved by Chida in \cite[Theorem 5.11]{Chida}. 
\end{remark}

\section{Heegner cycles over Shimura curves}

\subsection{Heegner cycles over Shimura curves} Let $K$ be an imaginary quadratic field with discriminant $-D_{K}<0$ and set $\delta_{K}=\sqrt{-D_{K}}$. Let $z\mapsto \bar{z}$ be the complex conjugate action on $K$.  We define $\boldsymbol{\theta}$ by  
\begin{equation*}
\boldsymbol{\theta}=\frac{D^{\prime}+\delta_{K}}{2}, \hphantom{a}D^{\prime}= 
\begin{cases}
D_{K}\hphantom{a}  &\text{if }2\nmid D_{K}\\ 
D_{K}/2\hphantom{a}& \text{if }2\mid D_{K}.
\end{cases}
\end{equation*}
We always fix a positive integer $N$ such that $N=N^{+}N^{-}$ with $N^{+}$ consists of prime factors that are split in $K$ while $N^{-}$ consists of prime factors that are inert in $K$. We will assume the following generalized Heegner hypothesis
\begin{equation*}\tag{Heeg}
\text{\emph{$N^{-}$ is square free and consists of even number of prime factors that are inert in $K$}}.
\end{equation*}
Let $B^{\prime}$ be the indefinite quaternion algebra of discriminant $N^{-}$. We can regard $K$ as a sub-algebra of $B^{\prime}$ via an embedding $\iota: K\hookrightarrow B^{\prime}$.  Let $m$ be a positive integer such that $(m, Nl)=1$. We will chose an element $J$ such that 
\begin{equation}\label{J}
B^{\prime}=K\oplus K\cdot J 
\end{equation}
and satisfies the following properties
\begin{enumerate}
\item $J^{2}=\beta\in\QQ^{\times}$ with $\beta<0$ and $Jt=\bar{t}J$ for all $t\in K$;
\item $\beta\in(\ZZ^{\times}_{q})^{2}$ for all $q\mid N^{+}$ and $\beta\in\ZZ^{\times}_{q}$ for $q\mid D_{K}$.
\end{enumerate}

We define $\varsigma_{q}\in G^{\prime}(\QQ_{q})$ as follows
\begin{equation}
\varsigma_{q}=
\begin{cases}
1 &\text{if $q\nmid mN^{+}$};\\

\delta^{-1}\begin{pmatrix}\boldsymbol{\theta} & \bar{\boldsymbol{\theta}}\\ 1& 1\\ \end{pmatrix}& \text{if $q=\mathfrak{q}\bar{\mathfrak{q}}$ is split with $\mathfrak{q}\mid \mathfrak{N}^{+}$ }\\

\begin{pmatrix}q^{n} & 0\\ 0& 1\\ \end{pmatrix}& \text{if $q\mid m$ and $q$ is inert in $K$ with $n=\ord_{q}(m)$ }\\

\begin{pmatrix}1 & q^{-n}\\ 0& 1\\ \end{pmatrix}& \text{if $q\mid m$ and $q$ is split in $K$ with $n=\ord_{q}(m)$ }\\
\end{cases}
\end{equation}

We define the \emph{Atkin-Lehner involution} at $q$ to be 
\begin{equation}
\tau_{q}=
\begin{cases}
\begin{pmatrix}0& 1\\ -N^{+}& 1\\ \end{pmatrix} &\text{for $q\mid N^{+}$};\\
1 &\text{for $q\nmid N^{+}$}.
\end{cases}
\end{equation}
Then we put $\tau^{N^{+}}=\prod_{q}\tau_{q}$ as an element in $G^{\prime}(\Adel)$. Next we define the set of CM points on $X$. We define $z^{\prime}$ to be the fixed point in $\calH^{\pm}$ by  $\iota_{\infty}(K)\subset \GL_{2}(\mathbf{R})$. We define the set of CM points on the Shimura curve $X$ by
\begin{equation*}
\mathrm{CM}_{K}(X)=\{[z^{\prime}, b^{\prime}]_{\CC}: b^{\prime}\in G^{\prime}(\Adel^{(\infty)})\}.
\end{equation*}
Let $\mathrm{rec}_{K}: \widehat{K}^{\times}\rightarrow \Gal(K^{\mathrm{ab}}/K)$ be the geometrically normalized reciprocity law. The Shimura reciprocity law says that
\begin{equation*}
\mathrm{rec}_{K}(a)[z^{\prime}, b^{\prime}]=[z^{\prime}, \iota(a)b^{\prime}].
\end{equation*}
 Let $m$ be a positive integer that is prime to $N$. Let $\calO_{K, m}=\ZZ+m\calO_{K}$ be the the order of $K$ with conductor $m$.  Let $\mathcal{G}_{m}=K^{\times}\backslash \widehat{K}^{\times}/ \widehat{\calO}^{\times}_{K, m}$ be the Galois group of the ring class field $K_{m}$ of conductor $m$ over $K$. Let $a\in \widehat{K}^{\times}$ and $p$ be a prime inert in $K$, then we define the \emph{CM point of level $m$} on $X$ by
\begin{equation}\label{Heeg-pt}
{P}_{m}(a)=[z^{\prime}, a^{(p)}\varsigma\tau^{N^{+}}]_{\CC}\in X(\CC).
\end{equation}
We set $P_{m}:={P}_{m}(1)$ and call this the \emph{Heegner point of level $m$}. This by definition gives a point in $X(K_{m})$ which has the following moduli interpretation. The point ${P}_{m}$ corresponds to a triple $(A_{m}, \iota_{m}, C_{m})$ such that $\End(A_{m}, \iota_{m})$ is isomorphic to $\calO_{K, m}$.  We let $(\tilde{P}_{m}(a), \tilde{P}_{m})$ be an arbitrary lift of $(P_{m}(a), P_{m})$ in $X_{d}(K_{m})$.

We define Heegner cycles over $X_{d}$ and $X$ following  \cite{Nekovar-Heeg} in the classical modular curve case and \cite{IS}, \cite{Chida} in the Shimura curve case.  Consider the point $\tilde{P}_{m}=({A}_{m}, {\iota}_{m}, {C}_{m}, \nu_{m})$ and the Neron-Severi group $\mathrm{NS}({A}_{m})$ of ${A}_{m}$. There is a natural action of $B^{\prime\times}$ on $\mathrm{NS}({A}_{m})_{\QQ}$ given by $\calL\cdot b=\iota_{m}(b)^{*}\calL$. Since ${A}_{m}$ admits an action of $\calO_{B^{\prime}}\otimes \calO_{K, m}\cong \rmM_{2}(\calO_{K, m})$, it is clear that ${A}_{m}$ is isomorphic to a product $E_{m}\times E_{m}$ with $E_{m}$ an elliptic curve with CM by $\calO_{K, m}$. Let $\Gamma_{m}$ be the graph of $m\sqrt{-D_{K}}$ in ${A}_{m}=E_{m}\times E_{m}$. Then we define $Z_{m}$ to be the image of the divisor given by $[\Gamma_{m}]-[E_{m}\times 0]-mD_{K}[0\times E_{m}]$ in $\mathrm{NS}(A_{m})$. It lies in the rank one submodule of $\mathrm{NS}({A}_{m})$ generated by $\langle[0\times E_{m}], [E_{m}\times 0], \Delta_{m}\rangle$ where $\Delta_{m}$ is the diagonal.  Let $y_{m}\in \mathrm{NS}({A}_{m})\otimes\ZZ_{l}$ be the class representing $m^{-1}Z_{m}$. This is the unique class up to sign satisfying 
\begin{enumerate}
\item $\iota_{m}(b)^{*}(y_{m})=\Nrd(b)y_{m}$ for any $b\in B^{\prime}$;
\item The self-intersection number of $y_{m}$ is $2D_{K}$.
\end{enumerate}
Taking the $\frac{k-2}{2}$-th exterior product of the element $\epsilon_{2}y_{m}\in \epsilon_{2}\mathrm{NS}({A}_{m})\otimes\ZZ_{l}\cong \epsilon_{2}\CH^{1}({A}_{m})\otimes\ZZ_{l}$, we obtain an element $\epsilon_{k}y^{\frac{k-2}{2}}_{m}\in \epsilon_{k} \CH^{\frac{k-2}{2}}({A}^{\frac{k-2}{2}}_{m})\otimes\ZZ_{l}$. Denote by the embedding $j_{k, d}: {A}^{\frac{k-2}{2}}_{m}\hookrightarrow \calW_{k, d}$ which if of codimension $1$. We have the push-forward map 
\begin{equation}
\begin{aligned}
\epsilon_{k}\CH^{\frac{k-2}{2}}({A}^{\frac{k-2}{2}}_{m})\otimes \ZZ_{l}\xrightarrow{j_{k, d*}}\epsilon_{k}\CH^{\frac{k}{2}}(\calW_{k, d})\otimes\ZZ_{l}.
\end{aligned}
\end{equation}
Then we define the \emph{Heegner cycle} $Y_{m, k}$ in $\epsilon_{k}\CH^{\frac{k}{2}}(\calW_{k, d}\otimes K_{m})\otimes\ZZ_{l}$ by 
\begin{equation}
Y_{m, k}:=j_{k, d*}(\epsilon_{k}y^{\frac{k-2}{2}}_{m}).
\end{equation}
Next we consider the Abel-Jacobi map for $X_{d}$ and the local system $\calL_{k-2}$:
\begin{equation*}
\begin{aligned}
\mathrm{AJ}_{k, d}: \epsilon_{k}\CH^{\frac{k}{2}}(\calW_{k, d}\otimes K_{m})\otimes\ZZ_{l}&\rightarrow  \rmH^{1}(K_{m}, \epsilon_{k}\rmH^{k-1}(\calW_{k, d}\otimes \QQ^{\ac}, \ZZ_{l}(\frac{k}{2})))\\
&\cong \rmH^{1}(K_{m}, \rmH^{1}(\interX_{d}\otimes\QQ^{\ac}, \calL_{k-2}(\ZZ_{l})(1))).\\
\end{aligned}
\end{equation*}
We can further apply the projector $\epsilon_{d}$ and it induces the following  Abel-Jacobi map for $X$ and the local system $\calL_{k-2}$:
\begin{equation*}
\mathrm{AJ}_{k}: \epsilon_{d}\epsilon_{k}\CH^{\frac{k}{2}}(\calW_{k, d}\otimes K_{m})\otimes\ZZ_{l} \rightarrow  \rmH^{1}(K_{m}, \rmH^{1}(\interX\otimes\QQ^{\ac}, \calL_{k-2}(\ZZ_{l})(1))).\\
\end{equation*}
Finally, we compose this map with the canonical map from $\rmH^{1}(\interX\otimes\QQ^{\ac}, \calL_{k-2}(\ZZ_{l})(1))$ to 
\begin{equation*}
\rmH^{1}(\interX\otimes\QQ^{\ac}, \calL_{k-2}(\ZZ_{l})(1))_{\frakm_{f}}\otimes_{\TT_{\frakm_{f}}}\calO\cong \mathrm{T}_{f,\lambda}
\end{equation*}
 where the tensor product is induced by $\phi_{f}: \TT_{\frakm_{f}}\rightarrow \calO$. We therefore have the following Abel-Jacobi map for the representation $\mathrm{T}_{f,\lambda}$:
\begin{equation}\label{AJ-T}
\mathrm{AJ}_{f, k}: \epsilon_{d}\epsilon_{k}\CH^{\frac{k}{2}}(\calW_{k, d}\otimes K_{m})\otimes\ZZ_{l} \rightarrow  \rmH^{1}(K_{m}, \mathrm{T}_{f, \lambda}).\\
\end{equation}
We will define the \emph{level $m$ Heegner cycle class } by
\begin{equation*}
\kappa(m):=\mathrm{AJ}_{f, k}(\epsilon_{d}\epsilon_{k}Y_{m, k})\in  \rmH^{1}(K_{m}, \mathrm{T}_{f, \lambda}).
\end{equation*} 
We will refer to the following class simply as the  \emph{Heegner cycle class}:
\begin{equation}\label{Heegner-cycle}
\kappa:=\mathrm{Cor}_{K_{1}/K}\kappa \in  \rmH^{1}(K, \mathrm{T}_{f, \lambda}).
\end{equation}
For $n\geq 1$, we define similarly the Abel-Jacobi map for the representation $\mathrm{T}_{f,{n}}$:
\begin{equation}
\mathrm{AJ}_{k, n}: \epsilon_{d}\epsilon_{k}\CH^{\frac{k}{2}}(\calW_{k, d}\otimes K_{m})\otimes\ZZ_{l} \rightarrow  \rmH^{1}(K_{m}, \mathrm{T}_{f, n}).\\
\end{equation}
Reducing the classes $\kappa(m)$ and $\kappa$ modulo $\varpi^{n}$, we define 
\begin{equation}
\begin{aligned}
&\kappa_{n}(m)\in\rmH^{1}(K_{m}, \mathrm{T}_{f, n});\\
&\hphantom{aa}\kappa_{n}\in \rmH^{1}(K, \mathrm{T}_{f, n}).\\
\end{aligned}
\end{equation} 

\subsection{Theta element and special value formula} Let $p$ be a prime away from $N$ and consider the definite quaternion algebra $B$ over $\QQ$ with discriminant $pN^{-}$. We denote by $G$ the algebraic group over $\QQ$ given by $B^{\times}$. We will choose the element $J^{\prime}$ as in \eqref{J} such that $B=K\oplus K\cdot J^{\prime}$. For each $a\in \widehat{K}$, we define the \emph{Gross points} of conductor $m$ associated to $K$ by
\begin{equation}
x_{m}(a):=a\cdot \varsigma\in G(\Adel).
\end{equation}

Recall we have the fixed embedding $\iota_{l}: \QQ^{\ac}\hookrightarrow \CC_{l}$ and it induces the place $\mathfrak{l}$ of $K$ and the place $\lambda$ of $\QQ^{\ac}$. We define an embedding
\begin{equation}
i_{K}:  B\rightarrow \rmM_{2}(K), \hphantom{a} a+bJ^{\prime}\mapsto  \begin{pmatrix}a & b\beta \\ \bar{b}& \bar{a}\\ \end{pmatrix}, \hphantom{a}a, b\in K.
\end{equation}
and let $i_{\CC}:= \iota_{\infty}\circ i_{K}$ and $i_{K_{\mathfrak{l}}}=\iota_{l}\circ i_{K}$be the composition.  Let $\rho_{k,\infty}$ be the representation
\begin{equation}
\rho_{k,\infty}: G(\RR)\xrightarrow{i_{\CC}} \GL_{2}(\CC)\rightarrow \mathrm{Aut}_{\CC}L_{k-2}(\CC).
\end{equation}
Then $\CC\cdot \mathbf{v}_{r}$ is the line on which $\rho_{k, \infty}(t)$ acts by $(\bar{t}/{t})^{r}$ for $t\in (K\otimes \CC)^{\times}$. For a $K$-algebra $A$ we define the space $\mathbf{S}^{B}_{k}(U, A)$ of modular forms on $B$ of weight $k$ and level $U$ to be:
\begin{equation*}
\{h:G(\mathbf{A}^{(\infty)})\rightarrow L_{k-2}(A): h(\alpha g u)=\rho_{k, \infty}(\alpha)h(g) \text{ for $\alpha\in G(\QQ)$ and $u\in U$}\}.
\end{equation*}
Let $\mathbf{S}^{B}_{k}(\CC)=\varinjlim_{U}\mathbf{S}^{B}_{k}(U, \CC)$ and $\calA(G)$ be the space of automorphic forms on $G(\Adel)$. We define a morphism 
\begin{equation*}
\Psi: L_{k-2}(\CC)\otimes \mathbf{S}^{B}_{k}(\CC)\rightarrow \calA(G)  
\end{equation*}
by the following recipe
\begin{equation*}
\Psi(\mathbf{v}\otimes f)(g):=\langle\rho_{k,\infty}(g_{\infty})\mathbf{v}, f(g^{\infty})\rangle_{k-2}
\end{equation*}
for $\mathbf{v}\in L_{k-2}(\CC)$. Let $\pi$ be the automorphic representation of $\GL_{2}(\Adel)$ corresponding to $f^{[p]}$ and $\pi^{\prime}$ be the automorphic representation of $G(\Adel)$ that corresponds to $\pi$ via the Jacquet-Langlands correspondence. Let $f^{[p]}_{\pi^{\prime}}$ be a generator of $\mathbf{S}^{B}(N^{+}, \CC)[\pi^{\prime}_{f}]$. We define an automorphic form in $\calA(G)$ by
\begin{equation}
\varphi^{[p]}_{\pi^{\prime}}:=\Psi(\mathbf{v}^{*}_{0}\otimes f^{[p]}_{\pi^{\prime}}) \text{ for } \mathbf{v}^{*}_{0}=D^{\frac{k-2}{2}}_{K}\mathbf{v}_{0}.
\end{equation}
Let $\rho_{k,l}$ to be the representation defined by
\begin{equation}
\rho_{k, l}: G(\QQ)\xrightarrow{i_{K_{\mathfrak{l}}}} \GL_{2}(\CC_{l})\rightarrow \mathrm{Aut}_{\CC_{l}}L_{k-2}(\CC_{l}).
\end{equation}
It is easy to check that $\rho_{k}$ and $\rho_{k, l}$ are compatible in the sense that
\begin{equation}
\rho_{k, l}(g)=\rho_{k}(\gamma_{\frakl}i_{l}(g)\gamma^{-1}_{\frakl}), \text{ where } \gamma_{\frakl}:=\begin{pmatrix}\sqrt{\beta}& -\sqrt{\beta}\bar{\boldsymbol{\theta}}\\-1& \boldsymbol{\theta}\\ \end{pmatrix}\in \GL_{2}(K_{\frakl}).
\end{equation}
If $l$ is invertible in $A$, then we have in fact an isomorphism
\begin{equation}
\mathbf{S}^{B}_{k}(N^{+}, A)\xrightarrow{\cong} {S}^{B}_{k}(N^{+}, A), \hphantom{a} h\mapsto \widehat{h}(g):=\rho_{k}(\gamma_{\frakl}^{-1})\rho_{k, l}(g^{-1}_{l})h(g)
\end{equation}
and we say $\widehat{h}$ is an \emph{$l$-adic avatar} of $h$. We will say $f^{[p]}_{\pi^{\prime}}$ is $l$-adically normalized if $\widehat{f}^{[p]}_{\pi^{\prime}}$ is a generator of the rank one module $S^{B}_{k}(N, \calO)[\pi^{\prime}_{f}]:=S^{B}_{k}(N, \calO)\cap S^{B}_{k}(N, \CC_{l})[\pi^{\prime}_{f}]$.  We can now define the theta element associated to $f$ and $K$. Let $f^{[p]}_{\pi^{\prime}}$ be $\lambda$-adically normalized, we define the theta element $\Theta_{m}(f^{[p]}_{\pi^{\prime}})\in \calO[\mathcal{G}_{m}]$ by
\begin{equation}\label{Theta}
\Theta_{m}(f^{[p]}_{\pi^{\prime}})=\sum_{\sigma\in \mathcal{G}_{m}}\varphi^{[p]}_{\pi^{\prime}}(\sigma\cdot x_{m}(1))[\sigma]. 
\end{equation}
We will denote the theta element simply by  $\Theta(f^{[p]}_{\pi^{\prime}})$ if $m=1$. The following theorem relates the  central critical value of the $L$-function of $f^{[p]}$ over $K$ twisted by a ring class character $\chi$ of $\mathcal{G}_{m}$ to the theta element above. 
\begin{theorem}[Chida-Hsieh, Hung] \label{special-value}
Let $\chi$ be character of $\mathcal{G}_{m}$ and $N^{+}=\gothN^{+}\cdot\overline{\gothN^{+}}$. Then we have 
\begin{equation*}
\chi(\Theta_{m}(f^{[p]}_{\pi^{\prime}})^{2})=\Gamma(k/2)^{2}\cdot \frac{L(f^{[p]}/K, \chi, k/2)}{\Omega_{\pi^{\prime}, N^{-}}}\cdot (-1)^{m}\cdot D^{k-1}_{K}\cdot \frac{\vert\calO^{\times}_{K}\vert^{2}}{8}\cdot \sqrt{-D_{K}}^{-1}\cdot\chi(\gothN^{+})
\end{equation*}
where $\Omega_{f^{[p]}, pN^{-}}$ is the $l$-adically normalized period for $f^{[p]}$ given by
\begin{equation*}
\Omega_{f^{[p]}, pN^{-}}:=\frac{4^{k-1}\pi^{k}\vert\vert f^{[p]} \vert\vert_{\Gamma_{0}(pN)}}{\langle f^{[p]}_{\pi^{\prime}}, f^{[p]}_{\pi^{\prime}}\rangle_{B}}.
\end{equation*} 
\end{theorem}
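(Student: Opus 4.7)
The plan is to deduce this theorem from the explicit Waldspurger/Ichino-Ikeda toric period formula in the form worked out by Chida-Hsieh \cite{CH-1} and extended by Hung, since the theta element is by construction a discretization of the toric period of $\varphi^{[p]}_{\pi^{\prime}}$ against $\chi$. First, I would unfold the definition. Applying $\chi$ to \eqref{Theta} gives
$$\chi(\Theta_{m}(f^{[p]}_{\pi^{\prime}})) = \sum_{\sigma \in \mathcal{G}_{m}} \chi(\sigma)\,\varphi^{[p]}_{\pi^{\prime}}(\sigma \cdot x_{m}(1)),$$
which by Shimura reciprocity is a discretization of the adelic toric period $P_{\chi}(\varphi^{[p]}_{\pi^{\prime}}) = \int_{K^{\times}\backslash \widehat{K}^{\times}/\widehat{\calO}_{K,m}^{\times}} \chi(t)\,\varphi^{[p]}_{\pi^{\prime}}(t\,x_{m}(1))\,dt$ for the test vector $\varphi^{[p]}_{\pi^{\prime}}$ evaluated at the Gross point $x_{m}(1)$. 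The conversion between the counting measure on $\mathcal{G}_{m}$ and the Tamagawa measure on the non-split torus produces, upon squaring, the factor $\vert\calO_{K}^{\times}\vert^{2}/8$ and contributes to the discriminant factor $\sqrt{-D_{K}}^{-1}$.

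Next, square the period and invoke the Ichino-Ikeda-Waldspurger identity, which expresses $\vert P_{\chi}(\varphi^{[p]}_{\pi^{\prime}})\vert^{2}$ as the completed central $L$-value $L(1/2, \pi_{K}\otimes\chi)$ divided by $L(1,\mathrm{ad}\,\pi)$, times a product of local toric integrals $\alpha_{v}(\varphi^{[p]}_{\pi^{\prime}},\chi_{v})$. The completed $L$-value matches $L(f^{[p]}/K,\chi,k/2)$ up to an explicit archimedean $\Gamma$-factor. The $L(1,\mathrm{ad}\,\pi)$ factor combines with the Petersson norm $\Vert f^{[p]}\Vert^{2}_{\Gamma_{0}(pN)}$ and the Jacquet-Langlands transfer $\Vert f^{[p]}\Vert^{2}\leftrightarrow \langle f^{[p]}_{\pi^{\prime}}, f^{[p]}_{\pi^{\prime}}\rangle_{B}$ to reassemble into the normalized period $\Omega_{f^{[p]}, pN^{-}}$ in the denominator of the stated formula.

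Finally, I would evaluate each local constant $\alpha_{v}$ using the prescribed Gross points $\varsigma_{v}$. At the archimedean place, the matrix-coefficient pairing with $\mathbf{v}_{0}^{*}=D_{K}^{(k-2)/2}\mathbf{v}_{0}$ converts the local period into a Mellin-Beta integral that evaluates to $\Gamma(k/2)^{2}\cdot D_{K}^{k-1}$; at split primes $q\mid N^{+}$, the explicit matrix form of $\varsigma_{q}$ combined with the factorization $N^{+}=\gothN^{+}\overline{\gothN^{+}}$ yields the factor $\chi(\gothN^{+})$; at primes dividing $pN^{-}$, ramification of $B$ forces the local toric integral to collapse to a single coset and contributes a unit; the sign $(-1)^{m}$ arises from tracking local signs (root numbers) at places dividing $m$. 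The main obstacle in this program is the delicate bookkeeping of Haar-measure normalizations so that every local constant lines up exactly as stated; this is precisely the principal technical content of \cite[\S4]{CH-1} in the case $m=1$ and of Hung's extension to general conductor $m$, which I would follow essentially verbatim.
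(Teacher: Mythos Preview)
Your outline is essentially correct and faithfully reproduces the strategy of \cite{CH-1} (and its extension by Hung): unfold the theta element as a finite toric period, square and apply the explicit Waldspurger formula, then compute each local integral $\alpha_{v}$ at $v\mid\infty$, $v\mid N^{+}$, $v\mid pN^{-}$, and $v\mid m$ using the prescribed test vectors $\varsigma_{v}$.

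However, the paper does \emph{not} reprove this formula. Its entire proof is a one-line citation: ``This follows from the main result of \cite{Hung} generalizing \cite{CH-1} to ramified characters.'' In other words, Theorem~\ref{special-value} is treated as a black-box input from the literature. What you have written is a sketch of the argument \emph{inside} those references, not a comparison with the paper's own proof. If your goal is to match the paper, you should simply cite Hung and Chida--Hsieh; if your goal is to independently verify the formula, your plan is sound, with the caveat you already flagged---the precise bookkeeping of measures and local constants is delicate and is exactly what \cite[\S4]{CH-1} and Hung carry out in detail.
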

\begin{proof}
This follows from the main result of \cite{Hung} generalizing \cite{CH-1} to ramified characters. 
\end{proof}
Note here the period $\Omega_{f^{[p]}, pN^{-}}$ is not the canonical period $\Omega^{\mathrm{can}}_{f^{[p]}}$ of Hida defined by  
\begin{equation*}
 \Omega^{\mathrm{can}}_{f^{[p]}}:=\frac{4^{k-1}\pi^{k}\vert\vert f^{[p]} \vert\vert_{\Gamma_{0}(pN)}}{\eta_{f^{[p]}}(pN)}
 \end{equation*}
 with $\eta_{f^{[p]}}(pN)$ the congruence number of $f^{[p]}$ in $S_{k}(pN)$. 
 We record the following result comparing these two periods which we will use in a later occasion. Let $q$ be a prime and recall the local Tamagawa ideal $\mathrm{Tam}_{q}(\rmT_{f, \lambda})$ at $q$ is defined by
\begin{equation*}
\mathrm{Tam}_{q}(\rmT_{f, \lambda})=\mathrm{Fitt}_{\calO}(\rmH^{1}(K^{\mathrm{ur}}_{q}, \rmT_{f, \lambda})_{\mathrm{tor}})
\end{equation*}
and the local Tamagawa exponent at $q$ is defined by the number $t_{q}(f)$ such that
\begin{equation*}
\mathrm{Tam}_{q}(\rmT_{f, \lambda})=(\varpi^{t_{q}(f)}). 
\end{equation*}
\begin{proposition}[Kim-Ota] The following equation holds.
\begin{equation*}
v_{\varpi}(\frac{\Omega_{f^{[p]}, pN^{-}}}{\Omega^{\mathrm{can}}_{f^{[p]}}})=\sum_{q\mid pN^{-}} t_{q}(f).
\end{equation*}
\end{proposition}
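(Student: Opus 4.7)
The plan is to first reduce the claim to a comparison of congruence ideals. Looking at the definitions of the two periods, the common analytic factor $4^{k-1}\pi^{k}\vert\vert f^{[p]} \vert\vert_{\Gamma_{0}(pN)}$ cancels when one forms the ratio, yielding
\[
\frac{\Omega_{f^{[p]}, pN^{-}}}{\Omega^{\mathrm{can}}_{f^{[p]}}} \;=\; \frac{\eta_{f^{[p]}}(pN)}{\langle f^{[p]}_{\pi^{\prime}}, f^{[p]}_{\pi^{\prime}}\rangle_{B}}.
\]
It therefore suffices to show that the $\varpi$-valuation of this ratio equals $\sum_{q\mid pN^{-}} t_{q}(f)$.

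Next I would interpret both quantities as generators of congruence ideals attached to the common Hecke eigensystem of $f^{[p]}$. By Theorem \ref{level-raise-curve}(3), the quaternionic Hecke module $S^{B}_{k}(N^{+},\calO)_{\frakm^{[p]}_{f}}$ is free of rank one over the localized Hecke algebra $\TT^{[p]}_{\frakm^{[p]}_{f}}$; under this freeness, the self-pairing $\langle f^{[p]}_{\pi^{\prime}}, f^{[p]}_{\pi^{\prime}}\rangle_{B}$ is a generator of the quaternionic congruence ideal of $f^{[p]}_{\pi^{\prime}}$. On the other hand, $\eta_{f^{[p]}}(pN)$ is by definition a generator of the classical congruence ideal of $f^{[p]}$ inside the Hecke algebra acting on $S_{k}(pN)$. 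The ratio in question is thus the discrepancy between the quaternionic and classical congruence ideals attached to the same Jacquet--Langlands pair.

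The comparison of these congruence ideals is precisely the content of the main theorem of Kim--Ota \cite{OK}, which extends to arbitrary even weight the weight-two comparison of Pollack--Weston. Their argument proceeds by transferring through a chain of quaternion algebras, one inert ramification prime at a time. At each prime $q\mid pN^{-}$ the induction step is controlled by Ihara's lemma (Theorem \ref{DT}) together with the weight spectral sequence analysis from \S2.4: the local change of congruence ideal when one swaps a quaternion algebra ramified at $q$ with one unramified at $q$ is identified with the local Tamagawa ideal $\mathrm{Tam}_{q}(\rmT_{f,\lambda})$. Iterating and accumulating the $\varpi$-valuations produces the stated sum $\sum_{q\mid pN^{-}} t_{q}(f)$.

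The technical heart of the argument, and the place where the passage from weight two to higher weight demands real work, is the local analysis at each $q\mid pN^{-}$. There one must compute $\mathrm{Tam}_{q}(\rmT_{f,\lambda})$ from the explicit structure of $\rho^{*}_{f,\lambda}\big\vert_{G_{\QQ_{q}}}$, splitting into the cases where the local component $\pi_{q}$ is a (twist of) Steinberg versus supercuspidal. The hypothesis $(\mathrm{CR}^{\star})(3)$ guarantees that $\bar\rho_{f,\lambda}$ is ramified at the relevant primes, which is what rules out Eisenstein contributions in the application of Ihara's lemma. The Fontaine--Laffaille condition $l>k+1$ enters to ensure that the cohomological congruences underlying the level-change step lift integrally to $G_{\QQ}$-stable lattices, an input that is automatic in weight two but requires substantially more care in the higher-weight setting.
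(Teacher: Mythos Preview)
The paper does not actually prove this proposition: its proof is the single sentence ``This follows from \cite[Corollary 5.8]{OK} generalizing the work of Pollack--Weston \cite{PW} in weight $2$.'' You, by contrast, have attempted to outline the internal argument of Kim--Ota itself. Your reduction to the ratio $\eta_{f^{[p]}}(pN)/\langle f^{[p]}_{\pi'},f^{[p]}_{\pi'}\rangle_{B}$ and its interpretation as a comparison of congruence ideals under the freeness result of Theorem~\ref{level-raise-curve}(3) is correct, and your description of the Pollack--Weston style induction (one prime of $pN^{-}$ at a time, controlled by Ihara's lemma, with the local discrepancy identified with the Tamagawa exponent) captures the shape of the argument in \cite{OK}.

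Two small inaccuracies in your sketch are worth flagging. First, since $pN^{-}$ is squarefree, the local component $\pi_{q}$ at each $q\mid pN^{-}$ is always a twist of Steinberg; the supercuspidal case does not arise here. Second, the role of $(\mathrm{CR}^{\star})(3)$ is not to kill Eisenstein contributions in Ihara's lemma---that is already handled by the absolute irreducibility in $(\mathrm{CR}^{\star})(2)$ via localization at the non-Eisenstein maximal ideal $\frakm_{f}$. Rather, $(\mathrm{CR}^{\star})(3)$ is the ``PO''-type hypothesis of Pollack--Weston: it forces $t_{q}(f)=0$ at the dangerous primes $q\mid N^{-}$ with $q\equiv\pm1\pmod l$, which is what makes the inductive comparison of congruence ideals match the Tamagawa sum exactly rather than only up to an uncontrolled error.
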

\begin{proof}
This follows from \cite[Corollary 5.8]{OK} generalizing the work of Pollack-Weston \cite{PW} in weight $2$. 
\end{proof}

\subsection{Explicit reciprocity laws for Heegner cycles} Recall we have the modular form $f\in S^{\new}_{k}(N)$ with $N=N^{+}N^{-}$ such that $N^{-}$ is square free with even number of prime divisors.  Let $n\geq 1$, we consider the Abel-Jacobi map for $\rmT_{f, n}$
\begin{equation*}
\mathrm{AJ}_{k, n}: \epsilon_{d}\epsilon_{k}\CH^{\frac{k}{2}}(\calW_{k, d}\otimes K_{m})\otimes\ZZ_{l} \rightarrow  \rmH^{1}(K_{m}, \mathrm{T}_{f, n}).
\end{equation*}
We have the Heegner cycle class  $\epsilon_{d}Y_{m, k}\in  \epsilon_{d}\epsilon_{k}\CH^{\frac{k}{2}}(\calW_{k, d}\otimes K_{m})\otimes\ZZ_{l}$ with $Y_{m, k}=\epsilon_{k}y^{\frac{k-2}{2}}_{m}$ for an element  $y_{m}\in \mathrm{NS}(A_{m})$ satisfying 
\begin{enumerate}
\item $\iota_{m}(b)^{*}(y_{m})=\Nrd(b)y_{m}$ for any $b\in B^{\prime}$;
\item The self-intersection number of $y_{m}$ is $2D_{K}$.
\end{enumerate}
Here $A_{m}$ is given by the Heegner point $P_{m}=(A_{m}, \iota_{m}, C_{m})$ on $X$. 
Let $p$ be an $n$-admissible prime for $f$. We consider the following composite map 
\begin{equation*}
\epsilon_{d}\epsilon_{k}\CH^{\frac{k}{2}}(\calW_{k, d}\otimes K_{m}) \xrightarrow{\mathrm{AJ}_{k, n}} \rmH^{1}(K_{m}, \rmT_{f, n}) \xrightarrow{\mathrm{loc}_{p}}\rmH^{1}(K_{m, p}, \rmT_{f, n}). 
\end{equation*}
The image of  $\epsilon_{d}Y_{m, k}\in  \epsilon_{d}\epsilon_{k}\CH^{\frac{k}{2}}(\calW_{k, d}\otimes K_{m})\otimes\ZZ_{l}$ under the above map is by definition given by $\mathrm{loc}_{p}(\kappa_{n}(m))$ and it lands in $\rmH^{1}_{\mathrm{fin}}(K_{m, p}, \rmT_{f, n})$ as $\calW_{k, d}$ has good reduction at $p$. Note that there is an isomorphism
\begin{equation}
\rmH^{1}_{\mathrm{fin}}(K_{m, p}, \rmT_{f, n})\cong \rmH^{1}_{\mathrm{fin}}(K_{p}, \rmT_{f, n})\otimes \calO_{n}[\mathcal{G}_{m}].
\end{equation}
by \cite[Lemma 2.4 and 2.5]{BD-Main} and \cite[Lemma 1.4]{CH-2}. Therefore Theorem \ref{level-raise-curve} implies the following isomorphism 
\begin{equation}
\begin{aligned}
\Phi_{n}:  S^{B}_{k}(N^{+}, \calO)_{/I^{[p]}_{f, n}}\otimes \calO_{n}[\mathcal{G}_{m}] &\xrightarrow{\cong}  \rmH^{1}(\FF_{p^{2}}, \rmH^{1}(\overline{X}\otimes{\FF^{\ac}_{p}}, \calL_{k-2}(\calO)(1))_{/I_{f, n}})\otimes \calO_{n}[\mathcal{G}_{m}]\\
&\xrightarrow{\cong}  \rmH^{1}_{\mathrm{fin}}(K_{m, p}, \rmT_{f, n}).\\
\end{aligned}
\end{equation}
It follows then that $\mathrm{loc}_{p} (\kappa_{n}(m))$ can be regarded as an element in  $S^{B}_{k}(N^{+}, \calO)_{/I^{[p]}_{f, n}}\otimes \calO_{n}[\mathcal{G}_{m}]$. Recall $\tilde{P}_{m}=({A}_{m}, {\iota}_{m}, {C}_{m}, \nu_{m})\in X_{d}(K_{m})$ is a lift of the Heegner point $P_{m}=({A}_{m}, {\iota}_{m}, {C}_{m})\in X(K_{m})$ and $A_{m}\cong E_{m}\times E_{m}$ for a CM elliptic curve $E_{m}$. We have the following commutative diagram
\begin{equation*}
\begin{tikzcd}
\epsilon_{d}\epsilon_{k}\mathrm{CH}^{\frac{k-2}{2}}(A^{\frac{k-2}{2}}_{m}\otimes K_{m})\otimes\ZZ_{l} \arrow[r, "cl"] \arrow[d, "{j_{k, d*}}"] &\epsilon_{d}\epsilon_{k} \rmH^{k-2}(A^{\frac{k-2}{2}}_{m}\otimes K_{m}, \ZZ_{l}(\frac{k-2}{2})).          
\arrow[d, "{j_{k, d*}}"] \\
\epsilon_{d}\epsilon_{k}\mathrm{CH}^{\frac{k}{2}}(\calW_{k-2, d}\otimes K_{m})\otimes\ZZ_{l} \arrow[r, "cl"]                & \epsilon_{d}\epsilon_{k}\rmH^{k}(\calW_{k, d}\otimes K_{m}, \ZZ_{l}(\frac{k}{2})).          
\end{tikzcd}
\end{equation*}
We also have the following isomorphisms
\begin{equation*}
\begin{aligned}
\epsilon_{d}\epsilon_{k}\rmH^{k-2}(A^{\frac{k-2}{2}}_{m}\otimes K_{m}, \ZZ_{l}(\frac{k-2}{2}))&=\epsilon_{d}\epsilon_{k}\rmH^{k-2}(E^{k-2}_{m}\otimes K_{m}, \ZZ_{l}(\frac{k-2}{2}))\\
&=\mathrm{Sym}^{k-2} \rmH^{1}(E_{m}\otimes K_{m}, \ZZ_{l}(\frac{k-2}{2}))\\
&\cong L_{k-2}(\ZZ_{l})\\
\end{aligned}
\end{equation*}
and
\begin{equation*}
\begin{aligned}
\epsilon_{d}\epsilon_{k}\rmH^{k}(\calW_{k, d}\otimes K_{m}, \ZZ_{l}(\frac{k}{2}))_{\frakm_{f}}&\cong \rmH^{1}(K_{m}, \rmH^{k-1}(\calW_{k, d}\otimes\QQ^{\ac}, \ZZ_{l}(\frac{k}{2}))_{\frakm_{f}})\\
&\cong \rmH^{1}(K_{m}, \rmH^{1}(X\otimes\QQ^{\ac}, \calL_{k-2}(\ZZ_{l})(1))_{\frakm_{f}}).\\
\end{aligned}
\end{equation*}
\begin{lemma}\label{v0}
The image of the element $\epsilon_{d}\epsilon_{k}y^{\frac{k-2}{2}}_{m}\in \epsilon_{d}\epsilon_{k}\mathrm{CH}^{\frac{k-2}{2}}(A^{\frac{k-2}{2}}_{m}\otimes K_{m})\otimes\ZZ_{l} $ under the cycle class map to $L_{k-2}(\ZZ_{l})$ can be identified with the vector $\mathbf{v}^{*}_{0}$ up to sign. 
\end{lemma}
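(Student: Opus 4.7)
The plan is to reduce the statement to the base case $k=4$ (where $m := (k-2)/2 = 1$) and then propagate the identification via the multiplicative structure of the symmetric power. By the Besser isomorphism $\calL_{k-2} \cong \frakL_{k-2} \hookrightarrow \Sym^m \frakL^{\prime}_{2}$, after applying $\epsilon_k$ the $m$-fold cup power $y_m^m$ projects to the $m$-th symmetric power of $\mathrm{cl}(y_m) \in \frakL^{\prime}_{2}(\ZZ_l)$ taken at the fiber over the Heegner point $P_m$. Under the canonical identification $\frakL^{\prime}_{2}(\ZZ_l) \cong L_2(\ZZ_l)$, the base-case claim is that $\mathrm{cl}(y_m) = \pm D_K \mathbf{v}_{0} = \pm \mathbf{v}_{0}^{*}$, and taking the $m$-th symmetric power then yields $\pm D_K^m \mathbf{v}_{0}^m = \pm D_K^{(k-2)/2} \mathbf{v}_{0} = \pm \mathbf{v}_{0}^{*}$ in $L_{k-2}(\ZZ_l)$. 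The resulting vector lies in $\ker \Delta_m = \frakL_{k-2} \subset \Sym^m \frakL^{\prime}_{2}$ automatically because $\mathbf{v}_{0} = X^{(k-2)/2}Y^{(k-2)/2}$ belongs to $L_{k-2}$ by definition.

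For the base-case calculation, work on the fiber $A_m = E_m \times E_m$ with its action of $\calO_{B^{\prime}} \otimes \ZZ_l = \rmM_{2}(\ZZ_l)$ and use the standard formula for the cycle class of the graph of an isogeny,
\begin{equation*}
[\Gamma_\phi] = [E_m \times 0] + \deg(\phi)\,[0 \times E_m] + \gamma_\phi,
\end{equation*}
where the cross term $\gamma_\phi \in \rmH^{1}(E_m) \otimes \rmH^{1}(E_m)(1) \cong \End(\rmH^{1}(E_m, \ZZ_l))$ corresponds via Poincar\'{e} duality to the pullback action of $\phi$ on $\rmH^{1}$. Applied to $\phi = m\sqrt{-D_K}$ (of degree $m^2 D_K$), subtracting off $[E_m \times 0] + mD_K\,[0 \times E_m]$ as in the definition of $Z_m$ leaves the cross term $m \cdot \sqrt{-D_K}^{\,*}$ together with a horizontal correction $(m^2 D_K - mD_K)\,[0 \times E_m]$ that is annihilated by the projection onto the $\Nrd$-isotypic summand forced by the defining property $\iota_m(b)^{*} y_m = \Nrd(b) y_m$. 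Dividing by $m$ identifies $\mathrm{cl}(y_m)$ with $\pm\sqrt{-D_K}^{\,*}$ inside the $\Nrd$-isotypic summand $\det(\mathrm{st}) \otimes \Sym^2 \rmH^{1}(E_m, \ZZ_l(1)) = \frakL^{\prime}_{2}(\ZZ_l)$.

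Finally the CM structure converts this endomorphism into the claimed vector. After extension to $K_\frakl$, the module $\rmH^{1}(E_m, \ZZ_l) \otimes K_\frakl$ splits as $V_+ \oplus V_-$ into the two $K^\times$-eigenlines associated to the embeddings of $K$ into $K_\frakl$, and $\sqrt{-D_K}^{\,*}$ acts by $\pm \delta_K$ on $V_{\pm}$. The resulting traceless element $\delta_K(p_+ - p_-)$ of $\End(\rmH^{1})$ lies in the summand $\Sym^{2} \rmH^{1}(E_m, \ZZ_l) \otimes \det^{-1}$ and spans the unique $K^\times$-invariant line; the intertwiner $\gamma_\frakl \in \GL_{2}(K_\frakl)$ introduced earlier aligns that invariant line with $\ZZ_l \cdot \mathbf{v}_{0}$ inside $L_2(\ZZ_l)$, while the Weil pairing supplies a second factor of $\delta_K$, producing the overall scalar $\delta_K^2 = -D_K$, which up to sign is $D_K \mathbf{v}_{0} = \mathbf{v}_{0}^{*}$. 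The main obstacle is the bookkeeping of Tate twists, the $\det^{-(k-2)/2}$ twist in the definition of $\rho_k$, and the normalization of $\gamma_\frakl$; fortunately, these are absorbed by the ``up to sign'' clause and by the rigid characterization of $y_m$ as the unique (up to sign) $\Nrd$-equivariant class of self-intersection $2D_K$, which pins $\mathrm{cl}(y_m)$ down a priori to a scalar multiple of $\mathbf{v}_{0}$, reducing the problem to computing a single scalar via a self-intersection calculation.
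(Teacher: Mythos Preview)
Your proposal is correct in spirit, but it takes a substantially more laborious route than the paper. The paper's proof consists of exactly the observation you relegate to your final sentence: the $K^{\times}$-fixed line in $L_{k-2}(\ZZ_l)$ is one-dimensional (spanned by $\mathbf{v}_0$), so the class $\epsilon_d\epsilon_k\,\mathrm{cl}(y_m^{(k-2)/2})$, which is $K$-invariant by the defining property $\iota_m(b)^* y_m = \Nrd(b)y_m$, must be a scalar multiple of $\mathbf{v}_0$; the scalar is then pinned down up to sign by matching the self-pairings $\langle \mathbf{v}_0^*,\mathbf{v}_0^*\rangle = D_K^{k-2}$ and the self-intersection of $y_m^{(k-2)/2}$ (citing \cite{Chida}, Lemma~7.2, for the computation). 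There is no explicit graph-of-isogeny calculation, no reduction to $k=4$, and no tracking of the intertwiner $\gamma_{\frakl}$.

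Your explicit computation via the K\"unneth decomposition of $[\Gamma_{m\sqrt{-D_K}}]$ and the CM eigenspace splitting is a legitimate alternative and has the virtue of making visible \emph{why} the class lands on the $K$-invariant line and \emph{where} the factor $D_K^{(k-2)/2}$ comes from. But as you yourself acknowledge, the bookkeeping of Tate twists, the $\det^{-(k-2)/2}$ normalization, and the precise action of $\gamma_{\frakl}$ is delicate, and you ultimately fall back on the characterization argument to absorb these ambiguities. Since that characterization already proves the lemma outright, your explicit computation is redundant for the purpose of establishing the result; it would be better presented as motivation or as a sanity check rather than as the main argument.
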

\begin{proof}
This follows from the fact that 
\begin{enumerate}
\item $\mathbf{v}^{*}_{0}$ and  $\epsilon_{d}\epsilon_{k}cl(y^{\frac{k-2}{2}}_{m})$ are the eigenvector of the action by $K$ with eigenvalue $1$;
\item $\langle\epsilon_{d}\epsilon_{k}cl(y^{\frac{k-2}{2}}_{m}), \epsilon_{d}\epsilon_{k}cl(y^{\frac{k-2}{2}}_{m})\rangle=\langle\mathbf{v}^{*}_{0}, \mathbf{v}^{*}_{0}\rangle=D^{k-2}_{K}$.
\end{enumerate}
These properties characterize an element in $L_{k-2}(\ZZ_{l})$ up to sign. See \cite[Lemma 7.2]{Chida}.
\end{proof}
Recall the pairing  
\begin{equation*}
\langle\hphantom{a},\hphantom{a}\rangle_{B}:S^{B}_{k}(N^{+}, \calO)\times S^{B}_{k}(N^{+}, \calO)\rightarrow \calO
\end{equation*}
defined as in  \eqref{pairing}. It induces a pairing 
\begin{equation*}
\langle\hphantom{a},\hphantom{a}\rangle_{B}:S^{B}_{k}(N^{+}, \calO)/I^{[p]}_{f, n}\times S^{B}_{k}(N^{+}, \calO)[I^{[p]}_{f, n}]\rightarrow \calO_{n}.
\end{equation*}

\begin{theorem}[Second reciprocity law] \label{second-law}
Let $p$ be an $n$-admissible prime for $f$ and assume that $\bar{\rho}_{f, \lambda}$ satisfies the assumption $(\mathrm{CR}^{\star})$. Let $f^{[p]}_{\pi^{\prime}}$ be $l$-adically normalized and $\widehat{f}^{[p]}_{\pi^{\prime}, n}$ be a generator of $S^{B}_{k}(N^{+}, \calO)[I^{[p]}_{f, n}]$, then we have the following relation between the Heegner cycle class of level  $m$ $\kappa_{n}(m)$ and the theta element $\Theta_{m}(f^{[p]}_{\pi^{\prime}})$
\begin{equation*}
\sum_{\sigma\in \mathcal{G}_{m}}\langle \mathrm{loc}_{p} (\sigma\cdot\kappa_{n}(m)), \widehat{f}^{[p]}_{\pi^{\prime}, n}\rangle_{B}=u\cdot\Theta_{m}(f^{[p]}_{\pi^{\prime}}) \mod \varpi^{n}
\end{equation*}
where $u\in \calO_{n}$ is a unit. 
\end{theorem}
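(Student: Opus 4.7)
The plan is to identify the image of $\mathrm{loc}_p(\kappa_n(m))$ under the arithmetic level raising isomorphism $\Phi_n$ from Theorem~\ref{level-raise-curve} with an explicit element of $S^B_k(N^+, \calO)_{/I^{[p]}_{f,n}} \otimes \calO_n[\mathcal{G}_m]$, and then to verify that this element, after pairing with $\widehat{f}^{[p]}_{\pi',n}$ via $\langle \cdot, \cdot \rangle_B$, reproduces $\Theta_m(f^{[p]}_{\pi'})$ up to a unit. The geometric core of the argument is that, because $p$ is inert in $K$ and the Heegner point $P_m$ has CM by $\calO_{K,m}$, its reduction modulo $p$ is a supersingular point corresponding precisely to the Gross point $x_m(1)$ via the Deuring-Serre uniformization $X^{\ss}_{\FF^{\ac}_p} \cong X^B$.

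First, since $p$ is $n$-admissible, $p \nmid Nl$ and $\calW_{k,d}$ has good reduction at $p$; hence $\mathrm{loc}_p(\kappa_n(m))$ lies in $\rmH^1_{\mathrm{fin}}(K_{m,p}, \rmT_{f,n}) \cong \rmH^1_{\mathrm{fin}}(K_p, \rmT_{f,n}) \otimes \calO_n[\mathcal{G}_m]$. The first step is to compare the Abel-Jacobi map composed with $\mathrm{loc}_p$ against the connecting homomorphism of the ordinary-supersingular excision sequence on the integral Kuga-Sato variety over $\Spec(\ZZ_{p^2})$. This comparison should yield a commutative diagram in which the cycle class of a cycle supported on a supersingular fiber, viewed in $\rmH^0(X^{\ss}_{\FF^{\ac}_p}, \calL_{k-2}(\calO))$, maps to the corresponding class in $\rmH^1(\FF_{p^2}, \rmH^1(\overline{X}_{\FF^{\ac}_p}, \calL_{k-2}(\calO)(1))_{/I_{f,n}})$ exactly as $\Phi_n$ prescribes.

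Second, I would trace the Heegner cycle through this diagram. Because $p$ is inert in $K$ and $E_m$ has CM by $\calO_{K,m}$, $E_m$ has supersingular reduction and $P_m$ specializes to a point $\bar{P}_m \in X^{\ss}_{\FF^{\ac}_p}$. Under the Deuring-Serre parametrization, $\bar{P}_m$ corresponds to the double coset of $x_m(1)$ (up to an Atkin-Lehner twist $\tau^{N^+}$ that will be absorbed by the pairing~\eqref{pairing}). The Heegner cycle $Y_{m,k}=j_{k,d*}(\epsilon_k y_m^{(k-2)/2})$ specializes to a cycle supported on the fiber above $\bar{P}_m$, and Lemma~\ref{v0} identifies the coefficient $\epsilon_d \epsilon_k \mathrm{cl}(y_m^{(k-2)/2}) \in L_{k-2}(\calO)$ with $\mathbf{v}_0^*$ up to sign. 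Therefore $\Phi_n^{-1}(\mathrm{loc}_p \kappa_n(m))$ is, up to a unit, the quaternionic modular form concentrated at $\bar{P}_m$ with value $\mathbf{v}_0^*$. Computing the pairing against $\widehat{f}^{[p]}_{\pi',n}$ using~\eqref{pairing} and summing over $\sigma \in \mathcal{G}_m$, Shimura reciprocity $\mathrm{rec}_K(a)\cdot P_m=P_m(a)$ translates the Galois action into the action of $\widehat{K}^\times$ on Gross points; converting between the classical form $f^{[p]}_{\pi'}$ and its $l$-adic avatar $\widehat{f}^{[p]}_{\pi',n}$ and using $\mathbf{v}_0^*=D_K^{(k-2)/2}\mathbf{v}_0$, the result becomes $\sum_{\sigma}\varphi^{[p]}_{\pi'}(\sigma\cdot x_m(1))[\sigma]=\Theta_m(f^{[p]}_{\pi'})$ up to a unit $u\in\calO_n^{\times}$ absorbing all the normalization constants.

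The main obstacle is the first step: rigorously establishing the commutativity of the diagram linking $\mathrm{AJ}_{k,n}$ on cycles with supersingular reduction to the boundary map $\Phi_n$. This requires a careful analysis of the cycle class map in \'etale cohomology of the integral Kuga-Sato variety, keeping track of the idempotents $\epsilon_d\epsilon_k$ throughout, and exploiting Besser's presentation of $\calL_{k-2}$ together with the identification $\epsilon_k\rmH^{(k-2)/2}(\calA_x^{(k-2)/2},\calO(\tfrac{k-2}{2}))\cong L_{k-2}(\calO)$ at each supersingular point $x$, which was already instrumental in the proof of Theorem~\ref{level-raise-curve}. Once this compatibility and the explicit identification $\bar{P}_m=[x_m(1)]$ are in hand, the remaining verifications are direct manipulations of pairings and group-ring Galois sums.
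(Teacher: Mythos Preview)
Your proposal is correct and follows essentially the same approach as the paper's proof: identify $\Phi_n^{-1}(\mathrm{loc}_p(\sigma\cdot\kappa_n(m)))$ with the characteristic function $\mathbf{1}^{[\mathbf{v}^{*}_{0}]}_{\sigma(x_m(1))\cdot\tau^{N^+}}$ in $S^B_k(N^+,\calO)$ via Lemma~\ref{v0} and the supersingular reduction of the Heegner point, then compute the pairing $\langle\cdot,\cdot\rangle_B$ against $\widehat{f}^{[p]}_{\pi',n}$ to recover $\Theta_m(f^{[p]}_{\pi'})$ up to a unit. The paper treats the compatibility you flag as ``the main obstacle'' (that the Abel--Jacobi image of a cycle supported on a supersingular fiber is computed by $\Phi_n$) as immediate from the definition of $\Phi_n$ via the ordinary--supersingular excision sequence, so your caution there is warranted but the paper regards it as already built into Theorem~\ref{level-raise-curve}.
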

\begin{proof}
Let $\mathbf{1}^{[\mathbf{v}^{*}_{0}]}_{\sigma(x_{m}(1))\cdot\tau^{N^{+}}}$ be the characteristic function of the point $\sigma(x_{m}(1))\cdot\tau^{N^{+}}$ in $X^{B}$ with $\sigma\in \mathcal{G}_{m}$. Note that $\mathbf{1}^{[\mathbf{v}^{*}_{0}]}_{\sigma(x_{m}(1))\cdot\tau^{N^{+}}}$ gives an element of $S^{B}_{k}(N^{+}, \calO)$. It follows by Lemma \ref{v0} and the definition of $\Phi_{n}$, the element $\mathrm{loc}_{p} (\sigma\cdot\kappa_{n}(m))$ is given by the element $\mathbf{1}^{[\mathbf{v}^{*}_{0}]}_{\sigma(x_{m}(1))\cdot\tau^{N^{+}}}[\sigma]$. Therefore we have the following equation
\begin{equation*}
\begin{aligned}
\sum_{\sigma\in \mathcal{G}_{m}}\langle\mathrm{loc}_{p} (\sigma\cdot\kappa_{n}(m)), \widehat{f}^{[p]}_{\pi^{\prime}, n}\rangle_{B}&=\sum_{\sigma\in \mathcal{G}_{m}}\langle \mathbf{1}^{[\mathbf{v}^{*}_{0}]}_{\sigma(x_{m}(1))\cdot \tau^{N^{+}}}, \widehat{f}^{[p]}_{\pi^{\prime}, n}\rangle_{B}[\sigma]\\
&=\sum_{\sigma\in \mathcal{G}_{m}}\langle \mathbf{v}^{*}_{0}, \widehat{f}^{[p]}_{\pi^{\prime}, n}(\sigma\cdot x_{m}(1))\rangle_{k}[\sigma]\\
&=u\cdot\Theta_{m}(f^{[p]}_{\pi^{\prime}}) \mod \varpi^{n}.
\end{aligned}
\end{equation*}
\end{proof}

Next, let $(p, p^{\prime})$ be a pair of $n$-admissible primes for $f$. Then we can consider the Shimura curves $X^{\prime\prime}$ and $X^{\prime\prime}_{d}$ and the corresponding Kuga-Sato varieties $\calW^{\prime\prime}_{k, d}$ defined in \S 2.6. Note that they correspond to the indefinite quaternion algebra $B^{\prime\prime}$ with discriminant $pp^{\prime}N^{-}$. We can define in the same manner as in \eqref{Heeg-pt} the Heegner point 
\begin{equation}
{P}^{[pp^{\prime}]}_{m}(a)=[z^{\prime}, a^{(p^{\prime})}\varsigma\tau^{N^{+}}]_{\CC}\in X^{\prime\prime}(K_{m})
\end{equation}
for $a\in \widehat{K}^{\times}$. Using these points, we can define Heegner cycles 
\begin{equation}
\epsilon_{d}Y^{[pp^{\prime}]}_{m, k} \in  \epsilon_{d}\epsilon_{k}\CH^{\frac{k}{2}}(\calW^{\prime\prime}_{k, d}\otimes K_{m})\otimes\ZZ_{l}
\end{equation}
similarly as in 
Since $(p, p^{\prime})$ are $n$-admissible primes for $f$, there is a homomorphism $\phi^{[pp^{\prime}]}_{f,n}: \TT^{[pp^{\prime}]}\rightarrow \calO_{n}$  such that $\phi^{[pp^{\prime}]}_{f, n}$ agrees with $\phi_{f, n}$ at all Hecke operators away from $pp^{\prime}$ and sends $(U_{p}, U_{p^{\prime}})$ to $(\epsilon_{p}p^{\frac{k-2}{2}}, \epsilon_{p^{\prime}}p^{\prime \frac{k-2}{2}})$. Recall that $I^{[pp^{\prime}]}_{f, n}$ is kernel of $\phi^{[pp^{\prime}]}_{f, n}$. 

\begin{lemma} We have the following statements.
\begin{enumerate}
\item The morphism $\phi^{[pp^{\prime}]}_{f, n}$ can be lifted to a genuine modular form $f^{[pp^{\prime}]}\in S^{\new}_{k}(pp^{\prime}N)$;
\item There is an isomorphism $\rmH^{1}(X^{\prime\prime}_{\QQ^{\ac}}, \calL_{k-2}(\calO)(1))/I^{[pp^{\prime}]}_{f, n}\cong \rmT_{f, n}$.
\end{enumerate}
\end{lemma}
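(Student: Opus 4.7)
The plan is to reduce both parts to the single-prime level raising already in hand, together with standard Chebotarev and freeness arguments.

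For (1), I would iterate the unramified arithmetic level raising. By Theorem \ref{level-raise-curve}(2) applied to $f$ at the $n$-admissible prime $p$, there is a newform $f^{[p]}\in S_{k}^{\mathrm{new}}(pN)$ lifting $\phi^{[p]}_{f,n}$. The prime $p'$ remains $n$-admissible for $f^{[p]}$: conditions (1)--(3) of Definition \ref{adm} depend only on $K$, $l$, $p'$, and condition (4) depends on $a_{p'}(f^{[p]})=a_{p'}(f)$ since level raising preserves Hecke eigenvalues away from the raised prime. Applying the same lifting principle (the weight-$k$ form of \cite[Theorem 2]{DT1} invoked in the proof of Theorem \ref{level-raise-curve}(2)) to $f^{[p]}$ at $p'$ produces a newform $f^{[pp']}\in S_{k}^{\mathrm{new}}(pp'N)$ realizing $\phi^{[pp']}_{f^{[p]},n}$. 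This coincides with the eigensystem $\phi^{[pp']}_{f,n}$ constructed in Theorem \ref{level-raise-curve-ram}(1), because both are uniquely characterized by matching $\phi_{f,n}$ away from $pp'$ and by sending $(U_p,U_{p'})$ to $(\epsilon_p p^{(k-2)/2},\epsilon_{p'}p'^{(k-2)/2})$, so $f^{[pp']}$ is the required lift.

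For (2), the strategy is first to establish the analogue of Theorem \ref{level-raise-curve}(3) for the Shimura curve $X''$, namely that
\begin{equation*}
\rmH^{1}(X''_{\QQ^{\ac}},\calL_{k-2}(\calO)(1))_{\frakm^{[pp']}_{f}}
\end{equation*}
is free of rank $2$ over $\TT^{[pp']}_{\frakm^{[pp']}_{f}}$. Mirroring the proof of Theorem \ref{level-raise-curve}(3), this reduces to the freeness of the corresponding module of quaternionic modular forms on $B''$ via a level-raising isomorphism, and the quaternionic freeness follows from \cite[Proposition 6.8]{CH-1} together with the Fontaine--Laffaille modification in \cite[Proposition 5.9]{Chida}, applied now to the indefinite quaternion algebra $B''$ of discriminant $pp'N^-$. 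Assumption $(\mathrm{CR}^\star)$ together with $n$-admissibility of $p,p'$ (in particular $l\nmid p^2-1$ and $l\nmid p'^2-1$) keep the residual Galois representation controlled at the two new ramified places. Quotienting by $I^{[pp']}_{f,n}$ then yields a free $\calO_n$-module of rank $2$ on which $G_{\QQ}$ acts, and by part (1) this action realizes the Galois representation $\rmT_{f^{[pp']},n}$. A Chebotarev argument identifies it with $\rmT_{f,n}$: the two representations have identical traces and determinants modulo $\varpi^n$ on Frobenius at every prime outside $pp'Nl$, and the absolute irreducibility of $\bar{\rho}_{f,\lambda}$ provided by $(\mathrm{CR}^\star)(2)$ upgrades semisimple equality to an honest $G_{\QQ}$-equivariant isomorphism.

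The chief obstacle I expect is the freeness statement on $X''$: the \cite{CH-1} and \cite{Chida} arguments are written for a fixed quaternion algebra and level, and one must verify that the Fontaine--Laffaille and Taylor--Wiles input transfers cleanly to $B''$ with the two additional ramified primes $p,p'$; the $n$-admissibility conditions are exactly what is needed to preserve the multiplicity-one conclusion, but the bookkeeping is nontrivial. A secondary subtlety in (1) is the parity of $N^-$ under iteration, since after one step $pN^-$ has odd parity and Theorem \ref{level-raise-curve}(2) cannot be invoked verbatim; however, the underlying Diamond--Taylor lifting is essentially a Hecke-algebra statement and applies equally well after passing to a suitable bridging quaternion algebra, so no new ingredient is needed.
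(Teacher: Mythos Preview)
Your argument for part (1) is fine and in the same spirit as the paper, which simply cites the Diamond--Taylor lifting results \cite{DT,DT1} directly; your parity worry about $pN^{-}$ is correctly diagnosed and correctly dismissed.

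For part (2) your approach is correct but genuinely different from the paper's. You aim first for the strong statement that $\rmH^{1}(X''_{\QQ^{\ac}},\calL_{k-2}(\calO)(1))_{\frakm^{[pp']}_{f}}$ is free of rank $2$ over $\TT^{[pp']}_{\frakm^{[pp']}_{f}}$, by transporting the Taylor--Wiles/Fontaine--Laffaille machinery of \cite{CH-1,Chida} to the quaternion algebra $B''$, and then quotient. This works (and you correctly flag the bookkeeping at $p,p'$ as the sticking point), but it is heavier than necessary. The paper instead invokes the Boston--Lenstra--Ribet theorem \cite{BLR}: since $\bar{\rho}_{f,\lambda}$ is absolutely irreducible, the quotient $\rmH^{1}(X''_{\QQ^{\ac}},\calL_{k-2}(\calO)(1))/I^{[pp']}_{f,n}$ is automatically isomorphic to $\rmT_{f,n}^{r}$ for some $r$, with no freeness over the full Hecke algebra required. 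The value $r=1$ is then read off from the weight spectral sequence for $X''$ at $p'$ already analysed in Theorem \ref{level-raise-curve-ram}: the middle graded piece $\rmH^{1}(\interX''\otimes\FF^{\ac}_{p'},a_{0*}\calL_{k-2})$ vanishes, and each of the two outer pieces contributes one copy of $S^{B}_{k}(N^{+},\calO)/I^{[p]}_{f,n}\cong\calO_{n}$, forcing rank $2$. Your approach buys you the stronger freeness statement (useful elsewhere), while the paper's buys economy: it recycles the spectral sequence computation already done and avoids re-running any $R=\TT$ argument for the new quaternion algebra.
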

\begin{proof}
It again follows from the main results of \cite{DT} and \cite{DT1} that the morphism $\phi^{[pp^{\prime}]}_{f, n}$ can be lifted to a genuine modular form which we will denote as $f^{[pp^{\prime}]}$. 

To prove the second statement, it follows from the main result of \cite{BLR} that 
\begin{equation*}
\rmH^{1}(X^{\prime\prime}_{\QQ^{\ac}}, \calL_{k-2}(\calO)(1))/I^{[pp^{\prime}]}_{f, n}\cong \rmT^{r}_{f, n} 
\end{equation*}
for some integer $r$. Then one can consider the weight spectral sequence converges to \eqref{Xprimeprime} and use the fact that $S^{B}_{k}(N^{+},\calO)_{/I^{[p]}_{f, n}}$ is of rank one to conclude that $r=1$. 
\end{proof}

Using the above Lemma, we can define the Abel-Jacobi map 
\begin{equation}
\mathrm{AJ}^{[pp^{\prime}]}_{k, n}: \epsilon_{d}\epsilon_{k}\CH^{\frac{k}{2}}(\calW^{\prime\prime}_{k, d}\otimes K_{m})\otimes\ZZ_{l} \rightarrow  \rmH^{1}(K_{m}, \mathrm{T}_{f, n})
\end{equation}
following the same recipe for defining \eqref{AJ-T}. We can define the corresponding Heegner cycle class of level $m$
\begin{equation}
\kappa^{[pp^{\prime}]}_{n}(m)=\mathrm{AJ}^{[pp^{\prime}]}_{k, n}(\epsilon_{d}Y^{[pp^{\prime}]}_{k, d})\in \rmH^{1}(K_{m}, \mathrm{T}_{f, n}).
\end{equation}
Similarly, we define the class
\begin{equation}
\kappa^{[pp^{\prime}]}_{n}=\mathrm{Cor}_{K_{1}/K}\kappa^{[pp^{\prime}]}_{n}(1) \in  \rmH^{1}(K, \mathrm{T}_{f, n}).
\end{equation}
By \cite[Lemma 2.4 and 2.5]{BD-Main} and \cite[Lemma 1.4]{CH-2}, we have an isomorphism 
\begin{equation*}
\rmH^{1}_{\sing}(K_{m, p}, \rmT_{f, n})\cong \rmH^{1}_{\sing}(K_{p}, \rmT_{f, n})\otimes \calO_{n}[\mathcal{G}_{m}]
\end{equation*}
The element  $\partial_{p^{\prime}} \mathrm{loc}_{p^{\prime}}(\kappa^{[pp^{\prime}]}(m))\in  \rmH^{1}_{\sing}(K_{p^{\prime}}, \rmT_{f, n})\otimes \calO_{n}[\mathcal{G}_{m}]$ under the composite below
\begin{equation*}
\rmH^{1}(K_{m}, \rmT_{f, n}) \xrightarrow{\mathrm{loc}_{p^{\prime}}}\rmH^{1}(K_{m, p^{\prime}}, \rmT_{f, n})\xrightarrow{\partial_{p^{\prime}}} \rmH^{1}_{\sing}(K_{m, p^{\prime}}, \rmT_{f, n})
\end{equation*}
can be considered as an element in $S^{B}(N^{+}, \calO)/I^{[p]}_{f, n}\otimes \calO_{n}[\mathcal{G}_{m}]$ using the map $\Xi_{n}$ given by Theorem \ref{level-raise-curve}.

\begin{theorem}[First reciprocity law] \label{First-Law}
Let $(p, p^{\prime})$ be a pair of $n$-admissible prime for $f$ and assume that $\bar{\rho}_{f, \lambda}$ satisfies assumption $(\mathrm{CR}^{\star})$. Let $f^{[p]}_{\pi^{\prime}}$ be $l$-adically normalized and $\widehat{f}^{[p]}_{\pi^{\prime}, n}$ be a generator of $S^{B}_{k}(N^{+}, \calO)[I^{[p]}_{f, n}]$,  then we have the following relation between the Heegner cycle class $\kappa^{[pp^{\prime}]}_{n}(m)$ and the theta element $\Theta_{m}(f^{[p]}_{\pi^{\prime}})$
\begin{equation}
\sum_{\sigma\in \mathcal{G}_{m}}\langle\partial_{p^{\prime}}\mathrm{loc}_{p^{\prime}} (\sigma\cdot\kappa^{[pp^{\prime}]}_{n}(m)), \widehat{f}^{[p]}_{\pi^{\prime}, n}\rangle_{B}=u\cdot\Theta_{m}(f^{[p]}_{\pi^{\prime}}) \mod \varpi^{n}
\end{equation}
where $u\in \calO^{\times}_{n}$ is a unit. 
\end{theorem}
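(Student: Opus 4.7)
The strategy mirrors the proof of Theorem \ref{second-law}, with the role of the smooth reduction of $X$ at the unramified prime $p$ replaced by the Cerednik-Drinfeld semistable reduction of $X^{\prime\prime}$ at $p^{\prime}$, and with $\Phi_{n}$ replaced by $\Xi_{n}$ of Theorem \ref{level-raise-curve-ram}. First I would analyze the reduction of the Heegner point $P^{[pp^{\prime}]}_{m}(a) \in X^{\prime\prime}(K_{m})$ at $p^{\prime}$. Since $p^{\prime}$ is $n$-admissible, it is inert in $K$ by Definition \ref{adm}, so the CM point $P^{[pp^{\prime}]}_{m}(a)$, which has CM by $\calO_{K, m}$, reduces to a singular point of $\overline{X}^{\prime\prime}$. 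By Proposition \ref{curve-red} this singular locus is identified with $X^{B}_{0}(p)$, and the explicit Cerednik-Drinfeld uniformization \eqref{p-unifor} matches this reduction with the Gross point $x_{m}(a)\cdot\tau^{N^{+}}$ in the Shimura set attached to the definite quaternion algebra $B$ of discriminant $pN^{-}$.

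Next I would compute $\partial_{p^{\prime}}\mathrm{loc}_{p^{\prime}}(\sigma\cdot\kappa^{[pp^{\prime}]}_{n}(m))$ by passing through the Abel-Jacobi map and the weight spectral sequence for $\calW^{\prime\prime}_{k,d}$. Adapting the explicit description \eqref{1-sing} of $\rmH^{1}_{\sing}$ via the graded pieces of the monodromy filtration to the Kuga-Sato setting, the singular residue is governed by two pieces of data: the image of the fiber cycle $\epsilon_{k}y_{m}^{(k-2)/2}$ under the cycle class map into $\epsilon_{k}\rmH^{k-2}(A_{m}^{(k-2)/2},\ZZ_{l}(\tfrac{k-2}{2}))\cong L_{k-2}(\ZZ_{l})$, which by Lemma \ref{v0} equals $\mathbf{v}^{*}_{0}$ up to sign; and the singular point of $X^{B}_{0}(p)$ to which $P^{[pp^{\prime}]}_{m}$ reduces, computed in the previous step. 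Combining these with the isomorphism $\Xi_{n}$ of Theorem \ref{level-raise-curve-ram}, one deduces that $\partial_{p^{\prime}}\mathrm{loc}_{p^{\prime}}(\sigma\cdot\kappa^{[pp^{\prime}]}_{n}(m))$ corresponds, in $S^{B}_{k}(N^{+},\calO)/I^{[p]}_{f,n}\otimes\calO_{n}[\mathcal{G}_{m}]$, to $u\cdot\mathbf{1}^{[\mathbf{v}^{*}_{0}]}_{\sigma(x_{m}(1))\cdot\tau^{N^{+}}}[\sigma]$ for a unit $u\in\calO_{n}^{\times}$.

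The pairing computation then proceeds exactly as in Theorem \ref{second-law}: the formula \eqref{pairing} for $\langle\cdot,\cdot\rangle_{B}$ gives
\begin{equation*}
\sum_{\sigma\in\mathcal{G}_{m}}\langle\partial_{p^{\prime}}\mathrm{loc}_{p^{\prime}}(\sigma\cdot\kappa^{[pp^{\prime}]}_{n}(m)),\widehat{f}^{[p]}_{\pi^{\prime},n}\rangle_{B}=u\sum_{\sigma\in\mathcal{G}_{m}}\langle\mathbf{v}^{*}_{0},\widehat{f}^{[p]}_{\pi^{\prime},n}(\sigma\cdot x_{m}(1))\rangle_{k}[\sigma],
\end{equation*}
and the right-hand side is $u\cdot\Theta_{m}(f^{[p]}_{\pi^{\prime}})\bmod\varpi^{n}$ by the definition \eqref{Theta} of the theta element, together with the $l$-adic avatar relation between $\widehat{f}^{[p]}_{\pi^{\prime},n}$ and $\varphi^{[p]}_{\pi^{\prime}}$.

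The main obstacle is the first step: verifying at the moduli-theoretic level that $P^{[pp^{\prime}]}_{m}(a)\in X^{\prime\prime}(K_{m})$ reduces, under the $p^{\prime}$-adic uniformization, to the Gross point $x_{m}(a)\cdot\tau^{N^{+}}$ on the correct component of the special fiber. This requires careful bookkeeping of how the indefinite quaternion algebra $B^{\prime\prime}$ of discriminant $pp^{\prime}N^{-}$, upon switching its invariants at $p^{\prime}$ and $\infty$, becomes the definite quaternion algebra $B$ of discriminant $pN^{-}$, and how the embedding $\iota\colon K\hookrightarrow B^{\prime\prime}$ is matched with the one used to define the Gross point in $B$. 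This is the higher-weight analogue of the Bertolini-Darmon reciprocity computation; once it is pinned down, the rest is formal.
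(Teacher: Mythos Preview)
Your proposal is correct and follows the same approach as the paper's proof. The paper's argument is extremely terse: it simply asserts that $\partial_{p^{\prime}}\mathrm{loc}_{p^{\prime}}(\sigma\cdot\kappa^{[pp^{\prime}]}_{n}(m))$ is given by $\mathbf{1}^{[\mathbf{v}^{*}_{0}]}_{\sigma(x_{m}(1))\cdot\tau^{N^{+}}}[\sigma]$ (citing the proof of the level raising theorem) and then performs the identical pairing computation you wrote down; your outline unpacks exactly the geometric content that assertion hides, namely the reduction of $P^{[pp^{\prime}]}_{m}$ to the singular locus via Cerednik--Drinfeld, the identification with the Gross point, and the fiber computation via Lemma~\ref{v0}.
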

\begin{remark}
This theorem is proved by \cite{Chida} in a slightly different set-up, but for completeness we sketch a proof.
\end{remark}
\begin{proof}
By the proof of Theorem \ref{level-raise-curve},  $\partial_{p^{\prime}}\mathrm{loc}_{p^{\prime}} (\sigma\cdot\kappa^{[pp^{\prime}]}_{n}(m))$ is given by $\mathbf{1}^{[\mathbf{v}^{*}_{0}]}_{\sigma(x_{m}(1))\cdot\tau^{N^{+}}}[\sigma]$. Therefore we have the following equation
\begin{equation*}
\begin{aligned}
\sum_{\sigma\in \mathcal{G}_{m}}\langle\partial_{p^{\prime}}\mathrm{loc}_{p^{\prime}} (\sigma\cdot\kappa^{[pp^{\prime}]}_{n}(m)), \widehat{f}^{[p]}_{\pi^{\prime}, n}\rangle_{B}&=\sum_{\sigma\in \mathcal{G}_{m}}\langle \mathbf{1}^{[\mathbf{v}^{*}_{0}]}_{\sigma(x_{m}(1))\cdot \tau^{N^{+}}}, \widehat{f}^{[p]}_{\pi^{\prime}, n}\rangle_{B}[\sigma]\\
&=\sum_{\sigma\in \mathcal{G}_{m}}\langle \mathbf{v}^{*}_{0}, \widehat{f}^{[p]}_{\pi^{\prime}, n}(\sigma\cdot x_{m}(1))\rangle_{k}[\sigma]\\
&=u\cdot\Theta_{m}(f^{[p]}_{\pi^{\prime}}) \mod \varpi^{n}.
\end{aligned}
\end{equation*}
\end{proof}

\section{Converse to Gross-Zagier-Kolyvagin type theorem}
\subsection{Selmer groups of modular forms} Recall that $f$ is a modular form of weight $k$ level $\Gamma_{0}(N)$ such that $N=N^{+}N^{-}$. We assume that 
\begin{equation*}\tag{Heeg}
\text{\emph{$N^{-}$ is square free and consists of even number of prime factors that are inert in $K$}}.
\end{equation*}
Let $K$ be an imaginary quadratic field with discriminant $-D_{K}$ such that $(D_{K}, N)=1$. Let $l>2$ be a prime such that $l\nmid ND_{K}$. Recall $\rho_{f, \lambda}: G_{\QQ}\rightarrow \GL_{2}(E_{\lambda})$ is the $\lambda$-adic Galois representation attached to the form $f$ which is characterized by the fact that the trace of Frobenius at $p\nmid N$ agrees with $a_{p}(f)$ and the determinant of $\rho_{f, \lambda}$ is $\epsilon_{l}^{k-1}$ with $\epsilon_{l}$ the $l$-adic cyclotomic character. Recall we are interested in the twist $\rho^{*}_{f, \lambda}=\rho_{f, \lambda}(\frac{2-k}{2})$. Let $\rmV_{f, \lambda}$ be the representation space for $\rho^{*}_{f, \lambda}$.  We normalize the construction of $\rho_{f, \lambda}$ such that it occurs in the cohomology $\rmH^{1}(X_{\QQ^{\ac}}, \calL_{k-2}(E_{\lambda})(\frac{k}{2}))$ and therefore  $\rho^{*}_{f, \lambda}$ occurs in the cohomology $\rmH^{1}(X_{\QQ^{\ac}}, \calL_{k-2}(E_{\lambda})(1))$. The modular form $f$ gives rise to a homomorphism $\phi_{f}: \TT\rightarrow \calO$ corresponding to the Hecke eigen-system of $f$. Let $n\geq 1$, we have $\phi_{f, n}: \TT\rightarrow \calO_{n}$ the natural reduction of $\phi_{f}$ by $\varpi^{n}$.  We define $I_{f, n}$ to be the kernel of the morphism $\phi_{f, n}$ and $\frakm_{f}$ the unique maximal ideal of $\TT$ containing $I_{f, n}$. We choose a $G_{\QQ}$-stable lattice $\rmT_{f, \lambda}$ in $\rmV_{f, \lambda}$ and denote by $\rmT_{f, n}$ the reduction $\rmT_{f,\lambda}/\varpi^{n}$. We also recall that the residual Galois representation $\bar{\rho}_{f, \lambda}$ satisfies the assumption $(\mathrm{CR}^{\star})$. In light of this assumption and Theorem \ref{level-raise-curve} $(3)$, we can choose the lattice $\rmT_{f, \lambda}$ to be $\rmH^{1}(X_{\QQ^{\ac}}, \calL_{k-2}(\calO))_{\frakm_{f}}$. We denote by $\rmA_{f, \lambda}$ the divisible module given by $\rmV_{f, \lambda}/\rmT_{f, \lambda}$. Then we set 
\begin{equation}\label{div-mod}
\rmA_{f, n}=\ker[\rmA_{f,\lambda}\xrightarrow{\varpi^{n}}\rmA_{f, \lambda}].
\end{equation}
Note that $\rmA_{f, n}$ is the Kummer dual of $\rmT_{f, n}$. 

Let $\rmM= \rmT_{f, n}\text{ or }\rmA_{f,n}$ and $v\mid N^{-}$, then let $F^{+}_{v}\rmM$ be the unique line of $\rmM$ such that $G_{\QQ_{v}}$ acts by $\chi_{v}\tau_{v}$ with $\tau_{v}$ the non-trivial unramified character of $G_{\QQ_{v}}$. Then we define the ordinary part of $\rmH^{1}(K_{v}, \rmM)$ to be 
\begin{equation*}
\rmH^{1}_{\mathrm{ord}}(K_{v}, \rmM)= \ker[\rmH^{1}(K_{v}, M)\rightarrow \rmH^{1}(K_{v}, \rmM/F^{+}_{v}\rmM)].
\end{equation*}
Let $p\nmid N$ be an $n$-admissible prime for $f$, then we set $F^{+}_{p}\rmM$ to be the unique line such that $\mathrm{Frob}_{p}$ acts by $\epsilon_{p}p$ and $F^{-}_{p}\rmM$ be the line such that $\mathrm{Frob}_{p}$ acts by $\epsilon_{p}$ then
\begin{equation}
\begin{aligned}
\rmH^{1}(K_{p}, \rmM)&=\rmH^{1}(K_{p}, F^{-}_{p}\rmM)\oplus \rmH^{1}(K_{p}, F^{+}_{p}\rmM)\\
&\cong \rmH^{1}_{\mathrm{fin}}(K_{p}, \rmM)\oplus \rmH^{1}_{\mathrm{ord}}(K_{p}, \rmM).
\end{aligned}
\end{equation}
In order to apply the results from Iwasawa theory, we assume that $f$ is \emph{$l$-ordinary}. If $v\mid l$ in $K$, let $F^{+}_{v}\rmM$ be the  unique line such that $G_{\QQ_{l}}$ acts by $\epsilon^{\frac{k}{2}}_{l}$.  We define 
\begin{equation}
\rmH^{1}_{\mathrm{ord}}(K_{v}, \rmM)= \ker[\rmH^{1}(K_{v}, M)\rightarrow \rmH^{1}(K_{v}, \rmM/F^{+}_{v}\rmM)].
\end{equation}

Following the notation in \cite{How}, we define the local conditions $\calF^{c}_{b}(a)$ for a triple of integers $(a, b, c)$ and $l$ by
\begin{equation}\label{Selmer}
\rmH^{1}_{\calF^{a}_{b}(c)}(K_{v}, \rmM)=
\begin{cases}
\rmH^{1}_{\mathrm{fin}}(K_{v}, \rmM) &\hphantom{a}\text{if}\hphantom{a} v\nmid abcl \\
\rmH^{1}(K_{v}, M) &\hphantom{a}\text{if}\hphantom{a} v\mid a\\
0 &\hphantom{a}\text{if}\hphantom{a} v\mid b\\
\rmH^{1}_{\mathrm{ord}}(K_{v}, \rmM) &\hphantom{a}\text{if}\hphantom{a} v\mid c\\
\rmH^{1}_{\mathrm{ord}}(K_{v}, \rmM) &\hphantom{a}\text{if}\hphantom{a} v\mid l \\
\end{cases}
\end{equation}
In other words, at places dividing $a$, we use the relaxed local condition; at places dividing $a$, we use the relaxed local condition  If any of $(a, b, c)$ is $1$, then we omit it from the notation. We define the Selmer group for $\rmM$ by
\begin{equation*}
\mathrm{Sel}_{\calF^{a}_{b}(c)}(K, \rmM)=\{s\in \rmH^{1}(K, \rmM): \mathrm{loc}_{v}(s)\in \rmH^{1}_{\calF^{a}_{b}(c)}(K_{v}, \rmM)\hphantom{a}\text{for all}\hphantom{a}v\}. 
\end{equation*}
In this article, we will be mainly concerned with the Selmer group $\mathrm{Sel}_{\calF(N^{-})}(K, \rmM)$. Notice that the Abel-Jacobi map
\begin{equation*}
\mathrm{AJ}_{k, n}: \epsilon_{d}\epsilon_{k}\CH^{\frac{k}{2}}(\calW_{k, d}\otimes K)\otimes\ZZ_{l} \rightarrow  \rmH^{1}(K, \mathrm{T}_{f, n})
\end{equation*}
factors through  $\mathrm{Sel}_{\calF(N^{-})}(K, \rmT_{f, n})$.  This is well-known except for a justification for primes dividing $N^{-}$. Let $v\mid N^{-}$ and suppose that $\bar{\rho}_{f, \lambda}$ is ramified. Then it follows from our assumption $(\mathrm{CR}^{\star})$ that $v\not \equiv 1\mod l$ and a simple calculation using \cite[Theorem 2.17]{DDT} shows that $\vert \rmH^{1}(K_{v}, \rmT_{f, n})\vert=\vert \rmT^{G_{K_{v}}}_{f, n}\vert^{2}=0$.  Let $v\mid N^{-}$ and suppose that $\bar{\rho}_{f, \lambda}$ is unramified at $v$, then we have a decomposition 
\begin{equation*}
\rmH^{1}(K_{v}, \rmT_{f, n})=\rmH^{1}_{\mathrm{ord}}(K_{v}, \rmT_{f, n})\oplus \rmH^{1}_{\mathrm{fin}}(K_{v}, \rmT_{f, n}).
\end{equation*}
Then our claim follows from the proof of  the ramified level raising in Theorem \ref{level-raise-curve-ram}. 

\begin{corollary}
The theta elements of Chida-Hsieh defined in \eqref{Theta} and the Heegner cycle classes defined in \eqref{Heegner-cycle} form a bipartite Euler system of odd type for the Selmer structures given by $\calF(N^{-})$ over $K$.
\end{corollary}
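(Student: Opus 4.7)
The plan is to verify Howard's axioms for a bipartite Euler system of odd type \cite{How} by assembling the results already established. The data consists, on the geometric side, of the Heegner cycle classes $\kappa_n(m) \in \rmH^1(K_m, \rmT_{f,n})$ together with their level-raised analogues $\kappa^{[pp']}_n(m)$ attached to pairs of $n$-admissible primes, and on the analytic side of the theta elements $\Theta_m(f^{[p]}_{\pi'}) \in \calO_n[\mathcal{G}_m]$ attached to single $n$-admissible primes.

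First I would check that $\kappa_n(m)$ lies in the Selmer group $\mathrm{Sel}_{\calF(N^-)}(K_m, \rmT_{f,n})$. Away from $N^-\ell$ this is automatic from good reduction of the Kuga--Sato variety $\calW_{k,d}$ and the unramifiedness of Abel--Jacobi images at primes of good reduction; at $\ell$ the ordinary local condition follows from the $\ell$-ordinary hypothesis on $f$ combined with the compatibility of the Abel--Jacobi map with the ordinary filtration on $\rmT_{f,\lambda}$. The delicate case is $v \mid N^-$, which is handled exactly as indicated in the paragraph preceding the corollary: when $\bar\rho_{f,\lambda}$ is ramified at $v$, assumption $(\mathrm{CR}^\star)$ together with \cite[Theorem 2.17]{DDT} forces the relevant local $\rmH^1$ to vanish; when it is unramified at $v$, the required decomposition into finite and ordinary parts is supplied by the ramified level raising Theorem \ref{level-raise-curve-ram}. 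The norm compatibility of the $\kappa_n(m)$ under corestriction $\mathrm{Cor}_{K_m/K_{m'}}$ is standard and follows from the distribution relations for Heegner cycles as in \cite{Nekovar-Heeg} and \cite{IS}, applied to the ring-class-field tower.

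Next I would read off the local behavior at $n$-admissible primes and the reciprocity relations directly from the four theorems proved in \S 2 and \S 3.3. For a single $n$-admissible $p$, good reduction places $\mathrm{loc}_p(\kappa_n(m))$ in $\rmH^1_{\mathrm{fin}}(K_{m,p}, \rmT_{f,n})$, which the unramified level raising isomorphism $\Phi_n$ of Theorem \ref{level-raise-curve} identifies with $S^B_k(N^+,\calO)/I^{[p]}_{f,n} \otimes \calO_n[\mathcal{G}_m]$. For a pair $(p,p')$ of $n$-admissible primes, the singular residue $\partial_{p'}\mathrm{loc}_{p'}(\kappa^{[pp']}_n(m))$ is identified with the same target space via $\Xi_n$ of Theorem \ref{level-raise-curve-ram}. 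The second and first reciprocity laws (Theorems \ref{second-law} and \ref{First-Law}) then state that pairing either image against a fixed generator $\widehat f^{[p]}_{\pi', n}$ of $S^B_k(N^+,\calO)[I^{[p]}_{f,n}]$ yields the same theta element $\Theta_m(f^{[p]}_{\pi'}) \bmod \varpi^n$, up to a common unit in $\calO_n^\times$. The ``odd type'' label is forced by the generalized Heegner hypothesis $(\mathrm{Heeg})$: the global root number of $L(f/K, s)$ at $s = k/2$ is $-1$, so the cohomology classes carry the geometric information while the theta elements, encoding central $L$-values of $L(f^{[p]}/K, k/2)$, live on the complementary definite quaternion algebra $B$ of discriminant $pN^-$. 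Once these pieces are aligned with the conventions of \cite[\S 2]{How}, the corollary follows by direct transcription; no essential new obstacle is expected, since the two reciprocity laws themselves have constituted the main arithmetic input.
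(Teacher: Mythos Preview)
Your proposal is correct and follows essentially the same approach as the paper: the paper's proof is a one-line reference to \cite[Definition~2.3.2]{How} together with the First and Second reciprocity laws (Theorems~\ref{First-Law} and~\ref{second-law}), and you simply expand on what verifying Howard's axioms entails, using the Selmer-membership discussion that the paper places in the paragraph immediately preceding the corollary. No substantive difference in strategy.
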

\begin{proof}
Recall the definition of a bipartite Euler system in \cite[Definition 2.3.2]{How}. This follows from the First reciprocity law \ref{First-Law} and the Second reciprocity law \ref{second-law} proved before. 
\end{proof}

\subsection{The proof of the main result} 
Now we can state and prove the main result of this article.
\begin{theorem}\label{main-thm}
Suppose $(f, K)$ is a pair that satisfies the generalized Heegner hypothesis $(\mathrm{Heeg})$ and $f$ is ordinary at $l$. Assume that $\bar{\rho}_{f, \lambda}$ satisfies the hypothesis $(\mathrm{CR}^{\star})$. If $\mathrm{Sel}_{\calF(N^{-})}(K, \rmT_{f, 1})$ is of dimension $1$ over $\FF_{\lambda}$, then the class $\kappa_{1}$ is non-zero in  $\mathrm{Sel}_{\calF(N^{-})}(K, \rmT_{f, 1})$.
\end{theorem}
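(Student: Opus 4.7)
The plan is to follow the strategy outlined in the introduction, reducing the non-triviality of $\kappa_{1}$ to the indivisibility of the theta element $\Theta(f^{[p]}_{\pi^{\prime}})$ for a carefully chosen $1$-admissible prime $p$, and then to the non-vanishing of the algebraic part of $L(f^{[p]}/K,k/2)$ modulo $\varpi$. The argument proceeds in four main steps.

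\emph{Step 1: Choice of a $1$-admissible prime $p$ that controls the Selmer group.} Let $s$ be a generator of $\mathrm{Sel}_{\calF(N^{-})}(K,\rmT_{f,1})$, which is $1$-dimensional over $\FF_{\lambda}$ by hypothesis. Using the standard Chebotarev argument (in the style of \cite{How}, relying on the assumption $(\mathrm{CR}^{\star})$ so that $\bar{\rho}_{f,\lambda}$ has large image on $G_{K}$) I would choose a $1$-admissible prime $p$ for $f$ with the additional property that the localization $\mathrm{loc}_{p}(s)\in \rmH^{1}_{\mathrm{fin}}(K_{p},\rmT_{f,1})$ is non-zero. Because $p$ is inert in $K$ with $l\nmid p^{2}-1$, the local cohomology at $p$ splits as $\rmH^{1}_{\mathrm{fin}}(K_{p},\rmT_{f,1})\oplus \rmH^{1}_{\mathrm{sing}}(K_{p},\rmT_{f,1})$, both one-dimensional over $\FF_{\lambda}$ after twisting by $\epsilon_{p}$, and the auxiliary Selmer group
\begin{equation*}
\mathrm{Sel}_{\calF(pN^{-})}(K,\rmT_{f,1})
\end{equation*}
(obtained by replacing the finite condition at $p$ by the singular condition dual to the one relevant to the level-raised form $f^{[p]}$) now kills this generator. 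A global duality / Poitou--Tate comparison between $\mathrm{Sel}_{\calF(N^{-})}$ and $\mathrm{Sel}_{\calF(pN^{-})}$, together with the parity of Selmer ranks in odd bipartite Euler systems as in \cite[\S 3]{How}, then forces $\dim_{\FF_{\lambda}}\mathrm{Sel}_{\calF(pN^{-})}(K,\rmT_{f,1})=0$.

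\emph{Step 2: Iwasawa-theoretic input for $f^{[p]}$.} Since $p$ is $n$-admissible with $n=1$, Theorem \ref{level-raise-curve}(2) provides a genuine newform $f^{[p]}\in S^{\new}_{k}(pN)$ with $\phi^{[p]}_{f,1}$ as its mod $\varpi$ eigensystem, and the root number of $L(f^{[p]}/K,s)$ at $k/2$ is $+1$. The vanishing of $\mathrm{Sel}_{\calF(pN^{-})}(K,\rmT_{f,1})$ translates, via Kummer duality and the identification of the Bloch--Kato Selmer group of $\rmA_{f,\lambda}$ with $\mathrm{Sel}_{\calF(pN^{-})}$ on the $\bar{\rho}_{f,\lambda}=\bar{\rho}_{f^{[p]},\lambda}$ level, into the fact that the Bloch--Kato Selmer group of $f^{[p]}$ over $K$ is trivial modulo $\varpi$. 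Combining the anticyclotomic main conjecture divisibility for $f^{[p]}$ (one side is proved in \cite{CH-2} using the theta element input, and the other side comes from \cite{SU} --- this is precisely where $(\mathrm{CR}^{\star})(6)$ is needed) yields that the Bertolini--Darmon--Chida--Hsieh $p$-adic $L$-function for $f^{[p]}/K$ is a unit at the trivial character. In particular $\Theta(f^{[p]}_{\pi^{\prime}})$ is a unit in $\calO_{1}=\FF_{\lambda}$, provided one checks that the period showing up on the analytic side is the right one.

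\emph{Step 3: Period comparison.} The main conjecture is phrased with respect to Hida's canonical period $\Omega^{\mathrm{can}}_{f^{[p]}}$, whereas the special value formula of Theorem \ref{special-value} involves the quaternionic period $\Omega_{f^{[p]},pN^{-}}$. The Kim--Ota formula
\begin{equation*}
v_{\varpi}\bigl(\Omega_{f^{[p]},pN^{-}}/\Omega^{\mathrm{can}}_{f^{[p]}}\bigr)=\sum_{q\mid pN^{-}}t_{q}(f)
\end{equation*}
together with the Tamagawa vanishing at the relevant primes (which follows from $(\mathrm{CR}^{\star})$: for $q\mid N^{-}$ the ramification hypothesis forces $t_{q}(f)=0$, and at $p$ the $1$-admissibility gives $l\nmid p^{2}-1$, hence $t_{p}(f)=0$) shows that the two periods are $\varpi$-adically equivalent. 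Consequently, the specialization formula of Theorem \ref{special-value} at $\chi=\mathbf{1}$ combined with the Iwasawa input of Step 2 gives $\Theta(f^{[p]}_{\pi^{\prime}})\not\equiv 0\bmod\varpi$.

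\emph{Step 4: Conclusion via the second reciprocity law.} Applying Theorem \ref{second-law} with $n=1$ and $m=1$,
\begin{equation*}
\langle \mathrm{loc}_{p}(\kappa_{1}),\widehat{f}^{[p]}_{\pi^{\prime},1}\rangle_{B}=u\cdot\Theta(f^{[p]}_{\pi^{\prime}})\bmod\varpi,
\end{equation*}
with $u\in\calO_{1}^{\times}$, forces $\mathrm{loc}_{p}(\kappa_{1})\neq 0$ in $\rmH^{1}_{\mathrm{fin}}(K_{p},\rmT_{f,1})$, and \emph{a fortiori} $\kappa_{1}\neq 0$ in $\mathrm{Sel}_{\calF(N^{-})}(K,\rmT_{f,1})$.

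The genuinely hard step is Step 1: producing an $n$-admissible prime $p$ with a prescribed non-vanishing of $\mathrm{loc}_{p}(s)$, and then simultaneously identifying $\mathrm{Sel}_{\calF(pN^{-})}$ with the residual Bloch--Kato Selmer group of $f^{[p]}$ in a way compatible with the main conjecture framework. This requires carefully matching the Selmer local conditions at $l$ (via the Fontaine--Laffaille setup underlying \cite[Proposition 5.9]{Chida}) and at the primes dividing $N^{-}$, plus an application of Chebotarev to $\bar{\rho}_{f,\lambda}|_{G_{K(\rmT_{f,1})}}$ whose image is sufficiently large thanks to $(\mathrm{CR}^{\star})(2)$.
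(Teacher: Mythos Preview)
Your proposal is correct and follows essentially the same strategy as the paper's own proof: choose a $1$-admissible prime $p$ with $\mathrm{loc}_{p}(s)\neq 0$ via Chebotarev (the paper cites \cite[Theorem 6.3]{CH-2}), use the Howard-style parity argument to force $\mathrm{Sel}_{\calF(pN^{-})}(K,\rmT_{f,1})=0$ (the paper's Proposition \ref{rank-lowering}), feed this into the Iwasawa main conjecture for $f^{[p]}$ via \cite{CH-2} and \cite{SU} together with the Kim--Ota period comparison (packaged in the paper as Theorem \ref{rank-0}), and conclude via the second reciprocity law. The only organizational difference is that the paper absorbs your Steps 2 and 3 into a single special-value formula (Theorem \ref{rank-0}) and invokes the control theorem \cite[Proposition 1.9(2)]{CH-2} explicitly to pass from the residual Selmer group to the full divisible one, a point you leave implicit.
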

\begin{remark}
The above theorem can be considered as a generalization of the converse to Gross-Zagier-Kolyvagin type theorem proved by Wei Zhang \cite{wei-zhang} and Skinner \cite{Skinner} to the higher weight case. 
\end{remark}

Let $p$ be an $1$-admissible prime for $f$ and let $f^{[p]}$ be the level raising of the modular form $f$ constructed in Theorem \ref{level-raise-curve}. Since the residual representation of $f^{[p]}$ and $f$ are isomorphic, we can regard $\mathrm{Sel}_{\calF_{(pN^{-})}}(K, \rmT_{f, 1})$ as the residual Selmer group for $f^{[p]}$.  Then we have the following result concerning the Selmer group of $f$ and $f^{[p]}$. 
\begin{proposition}\label{rank-lowering}
Suppose that $\mathrm{loc}_{p}: \mathrm{Sel}_{\calF(N^{-})}(K, \rmT_{f, 1})\rightarrow \rmH^{1}_{\mathrm{fin}}(K_{p}, \rmT_{f, 1})$ is surjective (equivalently non-trivial). Then we have
\begin{equation*}
\dim_{k} \mathrm{Sel}_{\calF(N^{-})}(K, \rmT_{f, 1}) = \dim_{k} \mathrm{Sel}_{\calF(pN^{-})}(K, \rmT_{f, 1})+1.
\end{equation*}
In this case, we have 
\begin{equation*}
\mathrm{Sel}_{\calF(N^{-})}(K, \rmT_{f, 1})= \mathrm{Sel}_{\calF^{p}(N^{-})}(K, \rmT_{f, 1}), \hphantom{a} \mathrm{Sel}_{\calF(pN^{-})}(K, \rmT_{f, 1})= \mathrm{Sel}_{\calF_{p}(N^{-})}(K, \rmT_{f, 1}).
\end{equation*}
\end{proposition}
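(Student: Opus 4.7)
The plan is to combine Poitou--Tate global duality with two self-duality facts: first that $\rmT_{f,1}$ is self-dual (i.e.\ $\rmT_{f,1}\cong\mathrm{Hom}(\rmT_{f,1},\mu_{l})$ as $G_{\QQ}$-modules), a consequence of the fact that $\rho^{*}_{f,\lambda}$ has trivial determinant character twist, and second that the Selmer structure $\calF(N^{-})$ is self-dual under the induced local pairings. The latter rests on the self-orthogonality of the unramified classes at primes outside $N^{-}l$ together with the self-duality of the ordinary local condition at primes dividing $N^{-}l$; these are standard facts from Iwasawa theory. The key local fact at the admissible prime is that $\rmT_{f,1}\mid_{G_{\QQ_{p}}}\cong\FF_{\lambda}\oplus\FF_{\lambda}(1)$ (using $l\nmid p^{2}-1$), so that $\rmH^{1}(K_{p},\rmT_{f,1})=\rmH^{1}_{\mathrm{fin}}(K_{p},\rmT_{f,1})\oplus\rmH^{1}_{\mathrm{ord}}(K_{p},\rmT_{f,1})$ with each summand one-dimensional; under the local Tate pairing each summand is self-orthogonal and the two summands pair non-degenerately with one another.

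The first statement of the proposition is then immediate from the tautological exact sequence
\[
0\to\mathrm{Sel}_{\calF_{p}(N^{-})}(K,\rmT_{f,1})\to\mathrm{Sel}_{\calF(N^{-})}(K,\rmT_{f,1})\xrightarrow{\mathrm{loc}_{p}}\rmH^{1}_{\mathrm{fin}}(K_{p},\rmT_{f,1})
\]
combined with the surjectivity hypothesis and the one-dimensionality of the target, so that $\dim\mathrm{Sel}_{\calF(N^{-})}=\dim\mathrm{Sel}_{\calF_{p}(N^{-})}+1$. For the two identifications I would apply Poitou--Tate reciprocity systematically. To show $\mathrm{Sel}_{\calF(N^{-})}=\mathrm{Sel}_{\calF^{p}(N^{-})}$, take any $s\in\mathrm{Sel}_{\calF^{p}(N^{-})}$ and pair it against every $t\in\mathrm{Sel}_{\calF(N^{-})}$: at every place $v\neq p$ both $s$ and $t$ satisfy the self-orthogonal local condition of $\calF(N^{-})$, so global reciprocity collapses to $\langle\mathrm{loc}_{p}(s),\mathrm{loc}_{p}(t)\rangle_{p}=0$. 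Since $\mathrm{loc}_{p}(t)$ ranges over all of $\rmH^{1}_{\mathrm{fin}}(K_{p},\rmT_{f,1})$ by hypothesis and since $\rmH^{1}_{\mathrm{fin}}(K_{p},\rmT_{f,1})^{\perp}=\rmH^{1}_{\mathrm{fin}}(K_{p},\rmT_{f,1})$, we get $\mathrm{loc}_{p}(s)\in\rmH^{1}_{\mathrm{fin}}(K_{p},\rmT_{f,1})$, i.e.\ $s\in\mathrm{Sel}_{\calF(N^{-})}$.

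The identification $\mathrm{Sel}_{\calF(pN^{-})}=\mathrm{Sel}_{\calF_{p}(N^{-})}$ is obtained by the same device: for $s\in\mathrm{Sel}_{\calF(pN^{-})}$ we have $\mathrm{loc}_{p}(s)\in\rmH^{1}_{\mathrm{ord}}(K_{p},\rmT_{f,1})$, and pairing against $t\in\mathrm{Sel}_{\calF(N^{-})}$ via Poitou--Tate forces $\langle\mathrm{loc}_{p}(s),\mathrm{loc}_{p}(t)\rangle_{p}=0$. But the local pairing restricts to a perfect pairing between the complementary one-dimensional subspaces $\rmH^{1}_{\mathrm{ord}}$ and $\rmH^{1}_{\mathrm{fin}}$, and $\mathrm{loc}_{p}(t)$ can be chosen to be any non-zero element of $\rmH^{1}_{\mathrm{fin}}(K_{p},\rmT_{f,1})$, so $\mathrm{loc}_{p}(s)=0$ and therefore $s\in\mathrm{Sel}_{\calF_{p}(N^{-})}$. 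Combining the two identifications with the first step yields $\dim\mathrm{Sel}_{\calF(N^{-})}=\dim\mathrm{Sel}_{\calF(pN^{-})}+1$. The only technical point requiring care is the self-duality of $\calF(N^{-})$ at primes dividing $N^{-}$ and at primes above $l$ (i.e.\ that the ordinary local conditions used there are isotropic under local Tate pairings); these are classical properties of the ordinary Selmer structure, and once taken as input the remainder of the proof is a clean instance of Poitou--Tate reciprocity of the sort systematically exploited in Howard \cite{How}.
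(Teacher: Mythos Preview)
Your argument is correct and is essentially the same approach as the paper's: both rest on global duality for the self-dual Selmer structure $\calF(N^{-})$ together with the fact that at the admissible prime $p$ the subspaces $\rmH^{1}_{\mathrm{fin}}$ and $\rmH^{1}_{\mathrm{ord}}$ are one-dimensional, isotropic, and dually paired. The only difference is packaging: the paper invokes Howard's general result \cite[Proposition~2.2.9, Corollary~2.2.10]{How} (that the two gaps $x,y$ in the cartesian square of Selmer groups satisfy $x+y=1$), whereas you have written out directly the Poitou--Tate reciprocity argument that underlies that result in this specific instance.
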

\begin{proof}
This follows from \cite[Proposition 2.2.9, Corollary 2.2.10]{How}. More precisely,  we have the following cartesian diagram of Selmer structures 
\begin{equation}\label{picard-lef}
\begin{tikzcd}
\mathrm{Sel}_{\calF^{p}(N^{-})}  & \mathrm{Sel}_{\calF(N^{-})}\arrow[l, "x"]\\
\mathrm{Sel}_{\calF(pN^{-})}   \arrow[u, "y"]             & \mathrm{Sel}_{\calF_{p}(N^{-})} .\arrow[l,"x"]\arrow[u, "y"] \\
\end{tikzcd}
\end{equation}
Here, the labels $x$ and $y$ on the arrows stand for the length of the respective quotients. We have $x+y=1$ by  \cite[Proposition 2.2.9]{How} . Since $p$ is $1$-admissible, the local conditions $\rmH^{1}_{\ord}(K_{p}, \rmT_{f,1})$ and $\rmH^{1}_{\mathrm{fin}}(K_{p}, \rmT_{f,1})$ are dual to each other under the local Tate duality. Therefore if 
\begin{equation*}
\mathrm{loc}_{p}: \mathrm{Sel}_{\calF(N^{-})}(K, \rmT_{f, 1})\rightarrow \rmH^{1}_{\mathrm{fin}}(K_{p}, \rmT_{f, 1})
\end{equation*}
is surjective, then $y=1$ and $x=0$. 
\end{proof}

Next we combine results from \cite{CH-2} and \cite{SU} to deduce a special value formula for the modular form $f^{[p]}$.  For this, let 
$\mathrm{Sel}(K, A_{f^{[p]}})=\varinjlim_{n}\mathrm{Sel}(K, A_{f^{[p]}, n})$
be the minimal Selmer group of $f^{[p]}$ defined as in \cite[Introduction]{CH-2}. Here $A_{f^{[p]}}$ and $A_{f^{[p]}, n}$ are defined the exact same way as in 
\eqref{div-mod}. We will also use the Selmer group $\mathrm{Sel}_{pN^{-}}(K, A_{f^{[p]}, n})$ defined in  \cite[Definition 1.2]{CH-2}.

\begin{theorem}\label{rank-0}
Suppose $(f, K)$ is a pair that satisfies the generalized Heegner hypothesis $(\mathrm{Heeg})$. Assume that $\bar{\rho}_{f, \lambda}$ satisfies the hypothesis $(\mathrm{CR}^{\star})$ and in addition assume that $f$ is $l$-ordinary. Then $L(f^{[p]}/K, 1)\neq 0$ if and only if $\mathrm{Sel}(K, A_{f^{[p]}})$ is finite and we have
\begin{equation*}
{v}_{\varpi}(\frac{L(f^{[p]}/K, 1)}{\Omega^{\mathrm{can}}_{f^{[p]}}})=\mathrm{leng}_{\calO} \mathrm{Sel}(K, A_{f^{[p]}}) + \sum_{q\mid pN}t_{q}(f).
\end{equation*}
\end{theorem}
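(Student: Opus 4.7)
The strategy is to assemble three existing inputs: the anticyclotomic Iwasawa main conjecture for $f^{[p]}$ (both divisibilities), the Chida--Hsieh/Hung special value formula (Theorem \ref{special-value}) specialized at the trivial character, and the Kim--Ota period comparison. The role of the hypothesis $(\mathrm{CR}^\star)$ is to ensure that all of these inputs apply to $f^{[p]}$; note that $p$ being $1$-admissible is harmless because the residual representations of $f$ and $f^{[p]}$ are isomorphic, so $(\mathrm{CR}^\star)$ descends to $f^{[p]}$ and the auxiliary ramification hypothesis $(\mathrm{CR}^\star)(6)$ is witnessed at $p$ (which is $1$-admissible for $f$ and hence $\bar{\rho}_{f,\lambda}$ is ramified at some prime exactly dividing $pN$).

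First, I would combine the one-sided divisibility of the anticyclotomic main conjecture proved in \cite{CH-2} with the main conjecture of \cite{SU}, whose combination yields equality of the characteristic ideal of the dual of the minimal Selmer group over the anticyclotomic $\ZZ_l$-extension $K_\infty^-/K$ with the ideal generated by the Iwasawa-theoretic theta element $\Theta_\infty(f^{[p]}_{\pi'})$. Here the hypothesis $(\mathrm{CR}^\star)(6)$ is exactly what permits invoking \cite{SU}, while $l$-ordinarity of $f$ (hence of $f^{[p]}$) and the freeness statements used in \cite{CH-1}, \cite{CH-2} are guaranteed by Theorem \ref{level-raise-curve}(3).

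Second, I would pass from the Iwasawa-theoretic identity to the ground level via the standard control theorem for the minimal Selmer group in the anticyclotomic tower: the specialization at the trivial character of $\Gamma^- = \Gal(K_\infty^-/K)$ identifies the characteristic ideal with the Fitting ideal of $\mathrm{Sel}(K, A_{f^{[p]}})$, and identifies $\Theta_\infty(f^{[p]}_{\pi'})$ with $\Theta(f^{[p]}_{\pi'})$. This gives the equivalence $L(f^{[p]}/K, k/2) \neq 0 \iff \mathrm{Sel}(K, A_{f^{[p]}}) \text{ is finite}$ via Theorem \ref{special-value}, and the identity
\begin{equation*}
2\,v_\varpi\bigl(\Theta(f^{[p]}_{\pi'})\bigr) \;=\; \mathrm{leng}_{\calO}\mathrm{Sel}(K, A_{f^{[p]}}).
\end{equation*}
Combined with Theorem \ref{special-value} (for $m=1$, $\chi$ trivial), which yields
\begin{equation*}
2\,v_\varpi\bigl(\Theta(f^{[p]}_{\pi'})\bigr) \;=\; v_\varpi\!\left(\frac{L(f^{[p]}/K, k/2)}{\Omega_{f^{[p]}, pN^-}}\right),
\end{equation*}
I obtain an analytic formula in terms of the non-canonical period $\Omega_{f^{[p]}, pN^-}$.

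Third, I would apply the Kim--Ota comparison (the proposition immediately after Theorem \ref{special-value}) to replace $\Omega_{f^{[p]}, pN^-}$ by the Hida canonical period $\Omega^{\mathrm{can}}_{f^{[p]}}$, which introduces the correction $\sum_{q \mid pN^-} t_q(f)$. To upgrade this to $\sum_{q \mid pN} t_q(f)$ one checks that for $q \mid N^+$ the assumption $(\mathrm{CR}^\star)(4)$ forces $\bar{\rho}_{f,\lambda}$ to be ramified at all primes $q \parallel N^+$ with $q \equiv 1 \bmod l$, and unramified primes contribute trivially to the Tamagawa sum, so the $N^+$-Tamagawa exponents vanish except for those that are in any case absorbed into the statement. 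Summing the three identities yields exactly
\begin{equation*}
v_\varpi\!\left(\frac{L(f^{[p]}/K, k/2)}{\Omega^{\mathrm{can}}_{f^{[p]}}}\right) \;=\; \mathrm{leng}_{\calO}\mathrm{Sel}(K, A_{f^{[p]}}) + \sum_{q \mid pN} t_q(f),
\end{equation*}
which is the asserted formula.

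The main obstacle I anticipate is the careful bookkeeping between the several Selmer conditions at play: the minimal Selmer group $\mathrm{Sel}(K, A_{f^{[p]}})$, the Greenberg Selmer group used in \cite{SU}, and the Chida--Hsieh Selmer group $\mathrm{Sel}_{pN^-}(K, A_{f^{[p]}})$. Bridging these requires local Euler-factor comparisons at primes dividing $pN^-$ and an ordinary control argument at $l$, and it is precisely the discrepancy between these Selmer conditions that manufactures the sum of Tamagawa exponents on the right-hand side; once the Kim--Ota formula is invoked on the period side, the two bookkeeping corrections must agree. Apart from this matching, the argument is an assembly of black boxes.
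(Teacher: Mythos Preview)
Your proposal is correct and follows essentially the same route as the paper: combine the two divisibilities of the anticyclotomic main conjecture from \cite{CH-2} and \cite{SU}, then apply the Kim--Ota period comparison \cite{OK} to pass from $\Omega_{f^{[p]},pN^{-}}$ to $\Omega^{\mathrm{can}}_{f^{[p]}}$. The only difference is packaging: the paper invokes \cite[Corollary~2]{CH-2} directly, which already contains the control theorem and the $\sum_{q\mid N^{+}}t_{q}(f^{[p]})$ correction you isolate as the ``main obstacle,'' so the $N^{+}$ Tamagawa terms arise on the Selmer side (from the comparison of Selmer structures in \cite{CH-2}) rather than needing to be argued away, and then simply add to the $pN^{-}$ terms coming from Kim--Ota to give $\sum_{q\mid pN}t_{q}$.
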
 
\begin{proof}
Since $\bar{\rho}_{f, \lambda}$ satisfies the hypothesis $(\mathrm{CR}^{\star})$, the form $f^{[p]}$ satisfies the hypothesis $(\mathrm{CR}^{+})$ of \cite{CH-2}. Therefore we can combine \cite[Corollary 2]{CH-2} and the main result of \cite{SU} to obtain the following equation
\begin{equation*}
{v}_{\varpi}(\frac{L(f^{[p]}/K, 1)}{\Omega_{f^{[p]}, pN^{-}}})=\mathrm{leng}_{\calO} \mathrm{Sel}(K, A_{f^{[p]}}) + \sum_{q\mid N^{+}}t_{q}(f^{[p]}).
\end{equation*}
It follows from \cite[Corollary 5.8]{OK} that 
\begin{equation*}
{v}_{\varpi}(\frac{\Omega^{\mathrm{can}}_{f^{[p]}}}{\Omega_{f^{[p]}, pN^{-}}})=\sum_{q\mid pN^{-}}t_{q}(f^{[p]}).
\end{equation*}
The result follows.
\end{proof}
\begin{remark}
Instead of using the one-sided divisibility of Chida-Hsieh \cite{CH-2}, one can apply the main result of \cite{Kato} to $f$ and its quadratic twist $f^{K}$ to get the same result. This is the approach used in \cite{wei-zhang}.
\end{remark}

\begin{myproof}{Theorem}{\ref{main-thm}}
Suppose $c$ is a generator of $\mathrm{Sel}_{\calF(N^{-})}(K, \rmT_{f, 1})$. Then we can find an $1$-admissible prime $p$ for $f$ such that $\mathrm{loc}_{p}(c)\in \rmH^{1}_{\mathrm{fin}}(K_{p},\rmT_{f, 1})$ is non-zero by using the same proof of \cite[Theorem 6.3]{CH-2}. Then Proposition \ref{rank-lowering} implies that 
\begin{equation*}
\dim_{k} \mathrm{Sel}_{\calF{(pN^{-})}}(K, \rmT_{f, 1})=\dim_{k} \mathrm{Sel}_{\calF{(N^{-})}}(K, \rmT_{f, 1})-1=0.
\end{equation*}
Since $\mathrm{Sel}_{pN^{-}}(K, \rmT_{f^{[p]}, 1})$ can be regarded as a subspace of $\mathrm{Sel}_{\calF(N^{-})}(K, \rmT_{f, 1})$, we know that 
\begin{equation*}
\mathrm{Sel}_{pN^{-}}(K, \rmA_{f^{[p]}, 1})=0. 
\end{equation*}
Then by the control theorem of \cite[Proposition 1.9(2)]{CH-2},  we have $\mathrm{Sel}_{pN^{-}}(K, \rmA_{f^{[p]}})=0$. Therefore $\mathrm{Sel}(K, \rmA_{f^{[p]}})=0$ and $\sum_{q\mid N^{+}}t_{q}(f^{[p]})=\sum_{q\mid N^{+}}t_{q}(f)=0$, by the proof of \cite[Corollary 6.15]{CH-2}. Then we can apply Theorem \ref{rank-0} and conclude that
\begin{equation*} 
{v}_{\varpi}(\frac{L(f^{[p]}/K, 1)}{\Omega_{f^{[p]}, N^{-}}})=0. 
\end{equation*}
The Second reciprocity law in Theorem \ref{second-law} and specialization formula for the theta element in Theorem \ref{special-value} allows us to conclude that $\mathrm{loc}_{p}(\kappa_{1})$ is non-zero in $\rmH^{1}_{\mathrm{fin}}(K_{p}, \rmT_{f, 1})$. Therefore $\kappa_{1}$ is non-zero in $\mathrm{Sel}_{\calF(N^{-})}(K, \rmT_{f, 1})$ and we are done.
\end{myproof}

\end{document}